\newcommand{\abs}[1]{\left| #1 \right|}
\newcommand{\norm}[1]{\left\|{#1}\right\|}
\newcommand{\R}{\mathbb{R}}
\newcommand{\N}{\mathbb{N}}
\newcommand{\E}[1]{\mathbf{E}\left[#1\right]}
\renewcommand{\P}{\mathbb{P}}
\theoremstyle{plain}
\newtheorem{thm}{Theorem}[section]
\theoremstyle{definition}
\newtheorem{rem}[thm]{Remark}
\newtheorem{lem}[thm]{Lemma}
\begin{document}

\def\spacingset#1{\renewcommand{\baselinestretch}%
{#1}\small\normalsize} \spacingset{1}

\title[FDR-Control in Segmentation]{ FDR-Control in Multiscale Change-point Segmentation}

\author[H. Li, A. Munk, and H. Sieling]{Housen Li,  
   Axel Munk\hspace{.2cm}\\
    Institute for Mathematical Stochastics, University of G\"{o}ttingen \\
    and Max Planck Institute for Biophysical Chemistry \\
G\"{o}ttingen, Germany\\
    and \\
    Hannes Sieling \\
    Institute for Mathematical Stochastics, University of G\"{o}ttingen\\
    G\"{o}ttingen, Germany}

\maketitle

\begin{abstract}
Fast multiple change-point segmentation methods, which additionally provide faithful statistical statements on the number, {locations} and sizes of the segments, have recently received great attention. In this paper, we propose a multiscale segmentation method, FDRSeg, which controls the false discovery rate (FDR) in the sense that  the number of false jumps is bounded linearly by  the number of true jumps. In this way, it adapts the detection power to the number of true jumps.   
We {prove} a non-asymptotic upper bound for its FDR in a Gaussian setting, which allows to calibrate the only parameter of FDRSeg properly.  Change-point locations, as well as the signal, are shown to be estimated in a uniform sense at optimal {minimax} convergence rates up to a log-factor.  {The latter is w.r.t. $L^p$-risk, $p \ge 1$, over classes of step functions with bounded jump sizes and either bounded, or possibly increasing, number of change-points.}
{FDRSeg can be efficiently computed by an accelerated dynamic program; its computational complexity is shown to be linear in the number of observations when there are many change-points. } 
The performance of the proposed method is examined by comparisons with some state of the art methods on both simulated and real datasets. {An R-package is available online. }
\end{abstract}

{\it Keywords:} {Multiscale inference};
{change-point regression};
{false discovery rate};
{deviation bound};
{dynamic programming};
{{minimax lower bound}};
{{honest inference}};
{array CGH data}; 
{ion channel recordings}.

\spacingset{1.15}

\section{Introduction}\label{sec:intro}
To keep the presentation simple, we assume that observations are given by the regression model
\begin{equation}\label{model_1}
 Y_i = \mu\left(\frac{i}{n}\right) + \sigma \varepsilon_i, \quad i=0,\,1,\,\ldots,\,n-1,
\end{equation}
where $\varepsilon_0,\ldots,\varepsilon_{n-1}$ are independent standard normally distributed, and $\sigma > 0$. The mean-value function $\mu$ is assumed to be right-continuous and piecewise constant with $K+1$ segments {$I_k=[\tau_k,\tau_{k+1}) \subset [0,1)$}, i.e.
\begin{equation}\label{eq:true_signal}
 \mu = \sum_{k=0}^{K} c_k \mathbf{1}_{[\tau_k,\tau_{k+1})}.
\end{equation}
Here {the number of change-points $K$ is unknown, as well as the change-points $\tau_k$, $0 < \tau_1 < \ldots < \tau_K < 1$}, with the convention that $\tau_0 :=0$ and $\tau_{K+1}:=1$. The (unknown) value of  $\mu$ on the $k$-th segment $I_k$ is denoted by $c_k$ {and we assume $c_k \neq c_{k+1}, k= 0,1,\ldots, K-1$ for identifiability of $\mu$}. We stress, however, that much of our subsequent methodology and analysis can be extended to other models, e.g. {for nonequidistant sampling points}, when the observations come from an exponential family or more generally, errors obey certain moment conditions, {and} to dependent data. {The latter case will be illustrated in Section~\ref{subsec:ion:channel} for the segmentation of ion channel recordings. }
%Let $\S_K$ be the class of right-continuous piecewise constant functions, $\mu$, on $[0,1)$ with $K$ change-points.
%\subsection{Literature}

%In multiple change-point detection, nowadays  there are different methods available which are able to process large amount of data. 
Estimation of $\mu$ and its change-points in this seemingly simple model~\eqref{model_1} (and variations thereof) has a long history in statistical research~(see e.g. \citep{CarMueSie94,CsoHor97,Sie13,FriMunSie14} for a survey). It has recently gained renewed interest from two perspectives, in particular. Firstly, large scale applications such as from finance~(see e.g. \citep{IncTia94,BayPer98,LavTey07,Spo09,DavHoeKra12}), signal processing~(see e.g. \citep{HarLev08,BlyBunMeiMul12,Hot12}) or genetic engineering~(see e.g. \citep{BraMueMue00,OlsVenLucWig04,ZhaSie07,ZhSie12,Jen10,Sie13}) call for change-point segmentation methods which are computationally fast, say almost linear in the number of observations. Secondly, besides of a mere segmentation of the data into pieces of constancy certain evidence on {the number, locations and heights of these pieces which come with this segmentation is demanded.}  

Many {state of the art segmentation} methods {which aim to meet the latter two goals} are based on minimizing a penalized cost functional among different number of change-points $K$ and locations of change-points $\tau_k$. For a cost function $C$, which serves as goodness-of-fit measure of a constant function on an interval, and a penalty against over-fitting $f(K)$ these approaches search for a solution of the global optimization problem
\begin{equation}\label{def:pen_cost}
\min_{\mu} \sum_{k=0}^{K} C(Y_{\lceil n\tau_{k}\rceil}, \ldots, Y_{\lceil n\tau_{k+1}\rceil-1}; c_k) + \gamma_n f(K).
\end{equation}
Fast and exact algorithms for this kind of methods {employ}  dynamic programming such as the optimal partitioning method \citep{Jac05} and the Potts estimate~\citep{BoyKemLieMunWit09, SWD14}, who advocate the sparsest subset selection penalty 
\begin{equation}\label{eq:l0:penalty}
f(K)=l_0({\mu}) = K.
\end{equation}
For more general $f$, see e.g. the segment neighbor method \citep{AugLaw89} or \citep{FriKemLieWin08}. More recently, Killick et al. \citep{KillFeaEck12} introduced a pruned dynamic program (PELT) with expected linear complexity mainly for $f(K)= K$ {and Du et al. \citep{DKK15} used dynamic programming to compute the marginal MLE in a Bayesian framework.}
From a computational point of view, approaches of type \eqref{def:pen_cost} seem therefore  beneficial.  Nevertheless, the
choice of {$f$ and its associated} balancing parameter $\gamma_n=\gamma_{n}(Y)$  in~\eqref{def:pen_cost} is subtle. Birg\`{e} and Massart \citep{Birge2006} offer examples and discussion of  this and other penalty choices, and Boysen et al. \citep{BoyKemLieMunWit09} provide {asymptotically} optimal choices of $\gamma_n$, as $n \to \infty$.  Zhang and Siegmund \citep{ZhaSie07, ZhSie12} proposed a penalty depending on $K$ and additionally on distances between consecutive change-points.

%For conclusions on the obtained segmentation with statistical evidence,  Bayesian methods offer an attractive alternative as well, see~\citep{barry1993bayesian,green1995reversible,Fea06,RigLebRob12,DKK15} and the references therein.
%
%Fast and exact algorithms for this kind of methods are often based on dynamic programming such as the segment neighbor method \citep{AugLaw89}, the optimal partitioning method \citep{Jac05}, and the Potts estimate~\citep{BoyKemLieMunWit09, SWD14}. More recently, \cite{KillFeaEck12} introduced a pruned dynamic programming (PELT) with expected linear complexity.
%From a computational point of view, approaches of type \eqref{def:pen_cost} are therefore  beneficial.  Nevertheless, the
%choice of the balancing parameter $\gamma$ and the functional $f$ is subtle~\citep{ZhaSie07,DavHoeKra12,LavTey07,Spo09,BayPer98,IncTia94} and significant conclusions on the number, location and size of the change-point function is not an easy task given the data at hand, although in many cases there is a good asymptotic understanding nowadays. A similar comment applies to global methods which rely on an $l_1$ approximation of the $l_0$ penalty $\gamma f(K) = \gamma K$ in~\eqref{def:pen_cost} including lasso-type techniques possibly together with post filtering to further enhance sparseness, see e.g.~\citep{FriHasHoeTib07,HarLev08,TibSauRosZhuKni04}. To overcome this lack of significance for given data, Bayesian methods offer an attractive alternative as well, see~\citep{barry1993bayesian,green1995reversible,RigLebRob12} and the references therein.

In contrast to solving the global optimization problem in~\eqref{def:pen_cost} another prominent class of methods is based on the idea to iteratively apply a local segmentation method to detect a single change-point. If such a change-point is detected on a segment, it is split into two parts and the same routine is applied to both new segments. The method stops if no further change-points are found. This approach, referred to as \emph{binary segmentation} (BS), is certainly among the most popular ones for change-point segmentation, in particular in the context of the analysis of copy number variation data and related biostatistical issues. It has already been suggested in \citep{ScoKno74} and more recently related methods have been proposed, such as \emph{circular binary segmentation} (CBS) \citep{OlsVenLucWig04, Ven07} and \emph{wild binary segmentation} (WBS) \citep{Fry14}. For these approaches, the to be specified parameter {among others} is the probability of including a false change-point in one iteration. Therefore, local error control can be provided, but the overall {uniform} control on the error to include or exclude wrong segments appears to be {often difficult} for these methods, as well. {A notable exception is~\citep[Theorems 3.2 and 3.3]{Fry14}, however, these bounds depend on constants which are difficult to specify. }

However, given the data at hand, {significant} conclusions  on the number, location and size of the change-point function are not an easy task for the above mentioned methods as these require uniform {finite sample error bounds, for all these quantities, simultaneously. }
%This is intimately related to the issue of honest confidence sets~\citep{FriMunSie14}, see e.g.~\citep
%{Li89,LeePoe06,HoRi11,Adam12,vdGBRD14,Richard14,ChChK14} for a general discussion.
A similar comment applies to other global segmentation methods which rely on an $l_1$ approximation of the {nonconvex} $l_0$ penalty in~\eqref{eq:l0:penalty} including lasso-type techniques possibly together with post filtering to further enhance sparseness, see e.g.~\citep{TibSauRosZhuKni04,FriHasHoeTib07,HarLev10}. 

Frick et al. \citep{FriMunSie14} suggest a hybrid method,  \emph{simultaneous multiscale change-point estimator} (SMUCE), {which tries to address both tasks {(computationally fast while still obeying finite sample uniform error control)} by minimizing the number of change-points under a local \emph{multiscale} side-constraint, see also \citep{BoyKemLieMunWit09,DavHoeKra12} for related estimators.} The side-constraint is based on a simultaneous multiple testing procedure on all scales (length of subsequent observations) which employs a scale calibrating penalty~\citep{DueSpok01}. It can be shown that for the resulting segmentation $\hat \mu$ the number of change-points is not overestimated at a pre-defined probability, $1-\alpha_S$ (i.e.  \emph{family-wise error rate}, FWER). This provides a direct statistical interpretation.
In fact, the error of including $j$ false positives provided by SMUCE has exponential decay, 
\begin{equation}\label{eq:overestimation:bound}
\P\{\hat K \ge K+j\} \le \alpha_S^{\lceil j/2 \rceil},\, j =1,2,\ldots 
\end{equation}
(see \citep{FriMunSie14}), which in particular controls the overestimation of the number of true {change-points $K$} ($j=1$ in~\eqref{eq:overestimation:bound})
\begin{equation}\label{eq:overestimation:bound:alpha}
\P\{\hat K > K \}\le \alpha_S.
\end{equation} 
Moreover, it can be shown that the method is able to detect the true {$K$} %number of change-points 
over a large range of scales with minimax detection power~\citep[Theorem 5]{FriMunSie14}.
%
%Additionally, this approach provides simultaneous  confidence intervals for the change-points locations. However, the validity of these intervals relies on the assumption that the number of change-points is not underestimated. This makes them difficult to be interpreted in practice, as it was pointed out in the discussion by \cite{FearnDis14}. In fact, in situations with a low signal-to-noise ratio (SNR) or many change-points, it turns out that this kind of error control is quite conservative \citep[see][]{FriMunSie14,FryDis14}. 
%
However, according to~\eqref{eq:overestimation:bound:alpha}, in particular in situations with low signal to noise ratio (SNR) or with many change-points compared to the number of observations, this error control necessarily leads to a conservative estimate $\hat \mu$ of $\mu$ in~\eqref{eq:true_signal}, i.e. with fewer change-points than the true number $K$.
Therefore, in this paper we offer a strategy to overcome this drawback which might be beneficial also for other related methods. 
This is based on the control of the \emph{false discovery rate} (FDR) \citep{BenHoc95} instead of the FWER {control in~\eqref{eq:overestimation:bound:alpha}}. Despite of the huge literature about change-point {segmentation and detection}, there is only a small number of papers addressing the FDR issue in this context. {Early references include \citep{TibWan08} {which} proposed a multiple stage procedure, and gave empirical evidence for the FDR control, and 
\citep{Efron2011} {which} considered a local FDR based approach {for} the copy number {variation} analysis of multiple samples {in cancer genetics.} Recently, Hao et al.
\citep{HNZ13} proved the FDR control of the screening and ranking algorithm~\citep{NiZh12} for a restricted definition of FDR, and Cheng and Schwartzman \citep{ChSch15} provided an asymptotic control of FDR of a smoothing based approach. {For further discussion} see Section~\ref{subsec:multiplicity:fdr}.}

\subsection{FDRSeg}

In this work, we will present  \emph{FDRSeg}, which controls the \emph{FDR} of the whole segmentation. 
The significance statement given by the method is quite intuitive and also holds for a finite number of observations. This reveals the contribution of this work {as threefold}: First, the new method overcomes the conservative nature of SMUCE and variants (see~\citep{YSSamworth14}) while maintaining a solid statistical interpretation. In doing this, we provide a general framework how to combine FDR-control with global segmentation methods {in a multiscale fashion}, which is of interest by its own. 
%We are not aware of any other change-point segmentation method which shares this property. 
Second, {various} {optimality} statements are provided, and all results hold in a non-asymptotic {manner}  uniformly over a large class of piecewise constant functions $\mu$ in model~\eqref{eq:true_signal}. {Third, FDRSeg is shown to be computable often in almost linear time. In summary, FDRSeg is a hybrid segmentation technique, combining statistical efficiency and fast computation while providing solutions with preassigned statistical accuracy.}

\begin{figure}[!h]
\centering
 \includegraphics[width= 0.98\columnwidth]{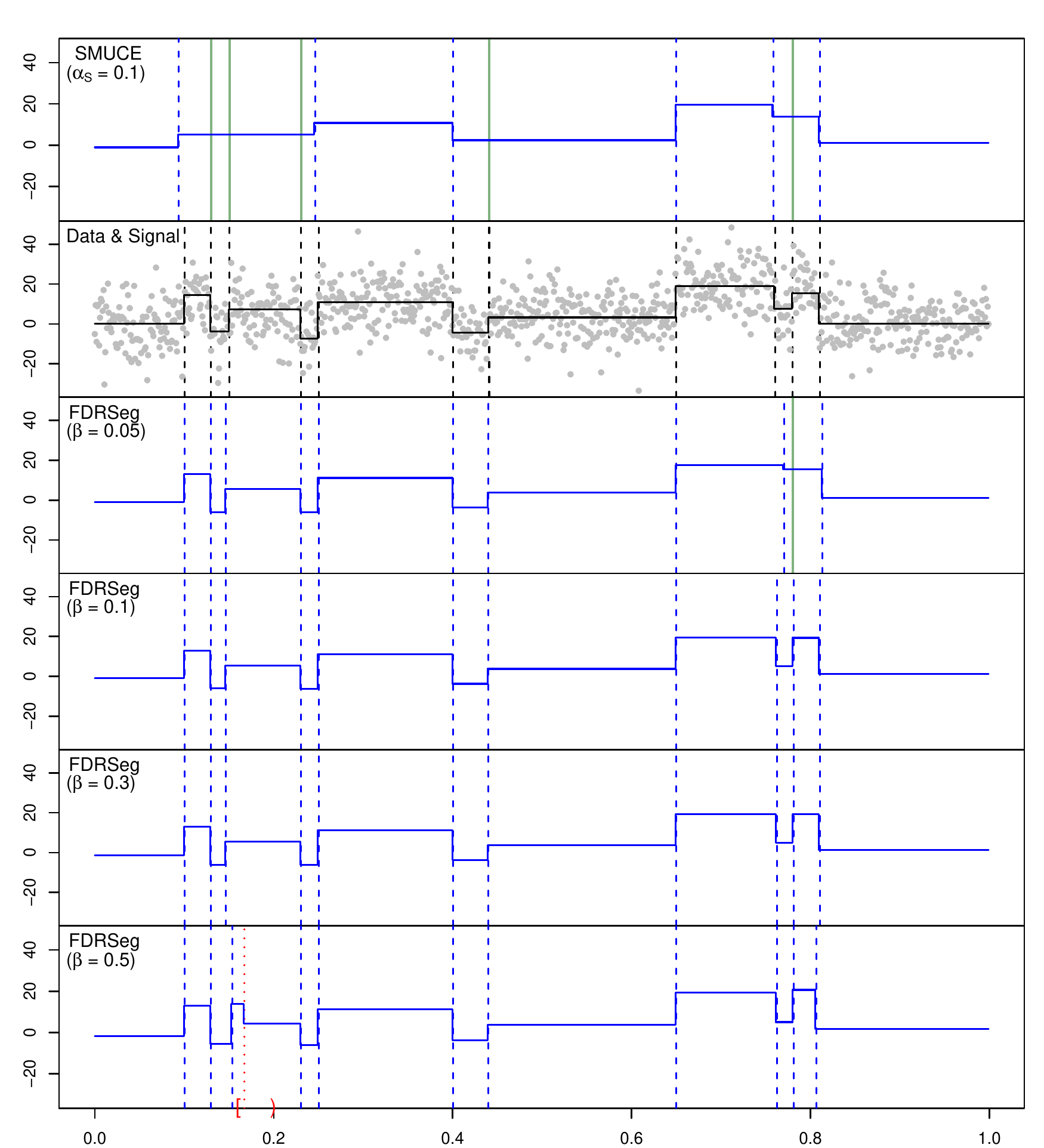} 
 \caption{Illustration of FDRSeg. The noisy data together with the true signal is shown in the second panel. Below, FDRSeg ($\beta = {0.05}$), FDRSeg ($\beta=0.1$), FDRSeg ($\beta = 0.3$), and FDRSeg ($\beta = 0.5$) are shown. As a comparison, SMUCE ($\alpha_S = 0.1$) is shown on the top. Each true discovery is indicated by a vertical blue dashed line and each false one by a vertical red dotted line and an associated interval defined in~\eqref{eq:interval_changepoint}. The vertical green lines indicate missed change-points.}
 \label{pic:intro}
\end{figure}

Before going into details, we illustrate our approach by the example in Figure~\ref{pic:intro}. We employed the \emph{blocks signal} \citep{DonJoh94} with Gaussian observations of standard deviation $\sigma = 10$ (with integrated SNR  $\int \abs{\mu(x)}dx/\sigma\approx 0.65$). 
Very naturally we declare such discoveries (estimated change-points) true if they are ``close'' (to be specified later) to true change-points. In this example  FDRSeg ($\beta = 0.1$) detects all the change-points correctly, while SMUCE $(\alpha_S = 0.1)$ finds only $6$ out of $11$, due to its requirement to control the  FWER in~\eqref{eq:overestimation:bound:alpha}. This remains valid until $\beta$ is increased to $\beta \approx 0.5$. For larger $\beta$ FDRSeg overestimates the number of change-points. {For example, if $\beta = 0.5$ it}  finds one additional false change-point (at $0.17$, marked by a vertical red line and an associated interval defined in~\eqref{eq:interval_changepoint}, in the bottom panel) besides all the true ones.  The proportion between false and all discoveries plus one (number of segments) is hence $1/(12+1)\approx 0.08 \ll 0.5$. Later we will show that FDRSeg is indeed able to control this proportion in expectation at {any} predefined level $\beta$ {uniformly over all possible change-point functions $\mu$}. For the other direction, the largest $\beta$ for {which FDRSeg}  underestimates the number of change-points is $0.07$ {(see the third panel {for $\beta = 0.05$}; the missing change-point is marked by a vertical green line).} That is, FDRSeg estimates the correct number of change-points for {the entire range of} $\beta \in (0.07, 0.50)$ {and hence appears to be remarkably stable in terms of the control parameter $\beta$.  This will be investigated more detailed later. }

%\subsection{False discoveries and change-point detection}\label{subsec:fdr_definition}

\subsection{Multiplicity and FDR control}\label{subsec:multiplicity:fdr}
For our purpose it is helpful to interpret the ``detection part'' of the multiple change-point regression problem as a multiple testing problem. In the literature methods with this flavor often consider multiscale local likelihood tests. 
Whereas local tests for the presence of a change-point on small systems of sets (e.g. the dyadics) of the sampling points $\{0,1/n, \ldots, (n-1)/n\}$ can be efficiently computed they may have low detection power and highly redundant systems such as the system of all intervals have been suggested instead~\citep{SieYak00,DueSpok01,FriMunSie14}. {See}, however, \citep{Wal10,RivWal12} for {less redundant} but  still asymptotically efficient systems. 
It was pointed out in~\citep{SieZhaYak11} that  classical FDR for redundant systems might be misleading, because such local tests are highly correlated and consequently tests on nearby intervals likely reject/accept the null-hypothesis together, see also~\citep{BenYek01,GuoSar13} for a general discussion of this issue. Siegmund et al. \citep{SieZhaYak11} therefore suggest to test for constancy on subintervals and to group the nearby false (or true) rejections, and count them as a single discovery, which allows to control the FDR group-wise.  
%Such situation, however, often leads to detection of only one single change-point. 
In our approach, we circumvent this, but still are able to work with redundant systems, because instead we  perform a multiple test for the change-points directly,  i.e. we treat the multiple testing problem
\[
H_{i}: \frac{i}{n} \text{ is not a change-point,} \text{ v.s. } A_{i}:  \frac{i}{n} \text{ is a change-point,} \quad i = 0,\ldots, n-1. 
\]
It remains to define a true/false discovery. This is done by identifying a rejection as a true discovery if it is ``close'' to a true change-point. To be specific, let $\{\hat{\tau}_1, \ldots, \hat{\tau}_{\hat{K}}\}$ be rejections (i.e. estimated change-points), and $\hat K$ the estimated number of change-points. For each $i \in \{1,\ldots, \hat{K}\}$, we classify $\hat{\tau}_i$ as a \emph{true discovery} if there is a true change-point lying in 
\begin{equation}\label{eq:interval_changepoint}
\left[\frac{\lceil n(\hat{\tau}_{i-1} + \hat{\tau}_{i})/2\rceil}{n}, \frac{\lceil n(\hat{\tau}_{i} + \hat{\tau}_{i+1})/2\rceil}{n}\right)
\end{equation}
where $\hat{\tau}_0 := 0$ and $\hat{\tau}_{\hat{K}+1} := 1$; otherwise, it is a \emph{false discovery}, see again the bottom panel in Figure~\ref{pic:intro}. 
Similar to \citep{BenHoc95}, we then define the false discovery rate (FDR) by  
\begin{equation}\label{def:fdr}
\text{FDR} :=  \E{\frac{\text{FD}}{\hat{K}+1}},
\end{equation}
where FD is the number of false discoveries in the above sense. 
Note, that the above notion of true/false discoveries is \emph{well defined}: (a) every estimated change-point is either true or false, but not both; (b) corresponding to each true change-point there is at most one true discovery, because the intervals~\eqref{eq:interval_changepoint} are disjoint for different $i$. We stress that no additional assumption, such as the sparsity of change-points, the minimal length of segments, is needed for this definition.  It automatically adapts to the individual length of segments, in particular for the region of rapid changes, such as subgating characteristic of ion channel recordings~\citep{Hot12}.  To some extent it neglects the accuracy of jump locations, especially when the change-points are far apart located. In this sense, this definition primarily focuses  on the {correct} \emph{number} of change-points rather than the \emph{locations} {or the \emph{sizes} of the segments}.
In the following we will see however, that our method will also have a high accuracy in estimating the locations. To this end we will consider the following evaluation measure
\begin{equation}\label{eq:def:accLoc}
d(\mu,\hat\mu) := \max_{0 \le i \le K+1} \min_{0 \le j \le \hat{K}+1} \abs{\tau_i -\hat\tau_j},
\end{equation} 
for  $\mu = \sum_{i=0}^{K} \mathbf{1}_{[\tau_i,\tau_{i+1})} c_i$ and $\hat\mu = \sum_{j=0}^{\hat{K}} \mathbf{1}_{[\hat\tau_j,\hat\tau_{j+1})} \hat{c}_j$, {with the convention that $\tau_0 = \hat\tau_0 = 0$ and $\tau_{K+1} = \hat\tau_{\hat K +1} = 1$. } {In addition, we will examine the $L^p$-risk ($1 \le p < \infty$) of $\hat \mu$. }

\subsection{Plan of the paper}
The rest of the paper is organized as follows.  
In Section~\ref{sec:method}, we introduce the new segmentation method {FDRSeg} and show its FDR control. In Section~\ref{sec:risk:fdrseg} we prove a {finite sample} exponential deviation bound for the estimation {error} of the jump locations in~\eqref{eq:def:accLoc} (see Theorem~\ref{thm:accuracy:location}). From this we derive that the locations are estimated at the optimal sampling rate $O(1/n)$ up to a log-factor uniformly over a large class of sequences of step functions $\mu$ with possibly increasing number of change-points, minimal scale of order $\log (n)/n$, and non-vanishing minimal jump height. 
Further, for the estimate $\hat\mu$ we show that its $L^p$-risk ($1\le p < \infty$) is of order $(\log(n)/n)^{\min\{1/2, 1/p\}}$ (see Theorem~\ref{thm:convergence:rate:LpRisk}) in the class of step functions with minimal scale and jump bounded away from zero and bounded jump size. In Theorem~\ref{thm:convergence:rate:lower:bound} we prove a lower bound for the $L^p$-risk which reveals FDRSeg to be minimax optimal up to a log-factor in this class.  
%{In addition, FDRSeg provides asymptotically honest confidence sets for the unknown signal $\mu$ with an explicit lower bound on the coverage probability (see Theorem~\ref{thm:confidence:set}).}
 
In Section~\ref{sec:impl} we will develop a pruned dynamic program for {the computation of FDRSeg}. {It has linear {memory} complexity, and linear time complexity for signals with many change-points, in terms of the number of observations. } The accuracy and efficiency of FDRSeg is examined in Section~\ref{sec:sim_app} on both simulated and real datasets. {Compared to state of the art methods, FDRSeg shows a {high} power in detecting change-points and {high efficiency for} signal recovery on various scales, simultaneously. } {As demonstrated on ion channel recordings,  a modification to dependent data (D-FDRSeg) reveals {relevant} gating characteristics, {but avoids at the same hand spurious change-points which are misleadingly found without adaptation to the correlated noise}.}
The paper ends with a conclusion in Section~\ref{sec:conclusion}. 

 An implementation of FDRSeg is provided in R-package { ``FDRSeg''}, available from \url{http://www.stochastik.math.uni-goettingen.de/fdrs}.

\section{Method and FDR control}\label{sec:method}
Now we will give a formal definition of the FDRSeg. To simplify, we assume that the noise level $\sigma$ is known. For methods to estimate $\sigma^2$, see {\eqref{eq:noise:level:estimation} or e.g. \citep{rice1984,HKT90,DetMunWag98} among many others.} Assume that $Y=(Y_0, \ldots, Y_{n-1})$ is given by model~\eqref{model_1}. 
For an interval $I \subset [0,1)$ we consider the \emph{multiscale statistic} with scale calibration (motivated from~\citep{FriMunSie14})
 \begin{equation}\label{eq:test:stat:segment}
 T_I(Y, c) = \max_{ [i/n,j/n] \subset I }  \frac{\left| \sum_{l=i}^{j} (Y_l- c) \right|}{\sigma \sqrt{j-i+1}} - \text{pen} \left(\frac{j-i+1}{\#{I}}\right),
 \end{equation}
where $c$ is a real number, $\text{pen} (x) = \sqrt{2\log({e}/{x})}$ the penalty term for the scale and $\#{I}$ the number of {sampling points $i/n$} in $I$ (scale). The first term in~\eqref{eq:test:stat:segment} describes how well the data can be  \emph{locally}  described by the constant $c$ on the interval $[i/n,j/n] \subset I$, and the second term (so called scale calibration) is designed to balance the detection power among different scales (i.e. lengths of intervals), see~\citep{DueSpok01,FriMunSie14} for further details. Thus, $T_I(Y,c)$ examines the hypotheses that $\mu \equiv c$ on the interval $I$  simultaneously over all intervals $\subset I$, i.e. in particular on all scales of $I$. 

For  $\alpha \in (0,1)$, let us introduce \emph{local} quantiles $q_{\alpha}(m)$, $m = 1, \ldots, n$, by
\begin{equation}\label{def:qalpha}
q_{\alpha}(m) := \min\left\{q: \P\left\{T_I(\varepsilon, \bar{\varepsilon}_I) > q\right\} \le \alpha\right\}, 
\end{equation}
where $\varepsilon = (\varepsilon_0, \ldots, \varepsilon_{n-1})$ is standard normally distributed,  $\bar{\varepsilon}_I = \sum_{i/n\in I}\varepsilon_{i}/\#{I}$, and $I$ a fixed interval with $\#{I} = m$. Obviously, $q_{\alpha}(m)$ does not depend on the choice of $I$ if $\#{I}=m$, which justifies the definition~\eqref{def:qalpha}. 

\begin{rem}\label{rem:quantiles:uniform:bounded}
 As a direct consequence of \citep{DueSpok01} (see also~\citep{DueWal08,FriMunSie14}) the limit distribution of $T_I(\varepsilon, \bar{\varepsilon}_I)$ is finite almost surely and  is continuous \citep{DuePitZho06}, as  $\#{I} \to \infty$. 
{For every $\alpha \in (0,1)$,} the values $q_\alpha(m)$'s are therefore uniformly bounded for all $m$.  
In practice, $q_{\alpha}(m)$'s are obtained by Monte-Carlo simulations. Note, that this needs only to be done once and can be stored in a table, as it does not depend on the data nor the signal $\mu$. 
\end{rem}

For our purpose we have to introduce the set of step functions restricted to the multiscale side-constraint induced by~\eqref{eq:test:stat:segment} and~\eqref{def:qalpha} (for fixed $\alpha$)
\begin{equation}\label{side-constraint:fdrsmuce}
\mathcal{C}_k =  \left\{\mu =  \sum_{i=0}^{k} c_i \mathbf{1}_{I_i}: T_{I_i}(Y, c_i) - q_{\alpha}(\#{I_i}) \le 0\quad \forall i = 0, 1, \ldots, k\right\}.
\end{equation}
The estimated number of change-points $\hat K$ according to FDRSeg will then be given by
\begin{equation}\label{def:Khat}
\hat{K} := \min \left\{k:  \mathcal{C}_k \ne \emptyset \right\}.
\end{equation}
The  $\hat K$ will be always an integer between $0$ and  $n$, since  $\sum_{i=0}^{n-1}Y_i\mathbf{1}_[i/n, (i+1)/n) \in \mathcal{C}_{n-1}$.
The FDRSeg estimate $\hat \mu$ is  given by
\begin{equation}\label{def:fdr_smuce}
\hat \mu := \mathop{\arg\min}_{\mu \in \mathcal{C}_{\hat K}}  \sum_{i=0}^{n-1} \left(Y_i-\mu\left(\frac{i}{n}\right)\right)^2,
\end{equation}
that is, the constrained maximum likelihood estimator within $\mathcal{C}_{\hat K}$. 
The intuition behind is to search for the simplest step function (with complexity measured by number of change-points) which lies in the multiscale constraint in the form of~\eqref{side-constraint:fdrsmuce}. 

The main result of this section is that our estimator is able to control the FDR in the sense of~\eqref{def:fdr} by choosing the local levels $\alpha(m)$  for intervals of length $m$  in~\eqref{def:qalpha} properly. 
%The main result of this section is the FDR control, defined in~\eqref{def:fdr}, of the estimate $\hat \mu$ in~\eqref{def:fdr_smuce} by choosing the local level $\alpha$ in~\eqref{def:qalpha} properly. 

\begin{thm}\label{thm:mainthm}
Let $Y$ be observations from model~\eqref{model_1}, and $0 < \alpha < 1/3$. Then {FDRSeg in~\eqref{side-constraint:fdrsmuce}-\eqref{def:fdr_smuce} with $q_\alpha$ in~\eqref{def:qalpha} controls the FDR defined in~\eqref{def:fdr},}
 \begin{equation}\label{eq:thm:bnd}
  \mathrm{FDR}_{\hat\mu}(\alpha) \leq \frac{2\alpha}{1-\alpha}=:\beta.
 \end{equation}
\end{thm}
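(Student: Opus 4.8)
The plan is to control the false discovery rate by a ``divide-and-conquer'' argument over the true segments, reducing the global statement to a local one on each maximal interval between two consecutive true change-points. First I would observe that, by construction, every false discovery $\hat\tau_i$ lies, together with the two ``midpoint'' endpoints of the interval~\eqref{eq:interval_changepoint}, entirely inside an open segment $(\tau_k,\tau_{k+1})$ on which the true signal $\mu$ is constant, equal to $c_k$. Hence if $L_k$ denotes the number of estimated change-points of $\hat\mu$ that fall strictly inside $I_k=[\tau_k,\tau_{k+1})$ and are classified as false, then $\mathrm{FD}=\sum_{k=0}^K L_k$, while the total number of segments of $\hat\mu$ is $\hat K + 1 \ge \sum_{k=0}^{K}(\text{number of }\hat\mu\text{-segments meeting }I_k)$ minus an overlap correction. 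The key deterministic inequality I would aim for is that the number of false discoveries inside one true segment is controlled by the number of $\hat\mu$-segments devoted to that true segment, and then sum.

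The probabilistic heart of the argument is a local estimate: on a true segment where $\mu\equiv c_k$, the multiscale side-constraint~\eqref{side-constraint:fdrsmuce} with quantile $q_\alpha$ forces $\hat\mu$ to be constant there unless the noise is ``large'' in the multiscale sense, and the definition~\eqref{def:qalpha} of $q_\alpha(m)$ makes that event have probability at most $\alpha$. More precisely, I would show that if $\hat\mu$ places $\ell$ extra change-points inside a true segment, then on at least one sub-interval $[i/n,j/n]$ contained in that segment the constrained fit violates what the constant $c_k$ would have achieved, which — via the definition of $T_I$ and $q_\alpha$ — implies an event of the form $\{T_{J}(\varepsilon,\bar\varepsilon_J) > q_\alpha(\#J)\}$ for some interval $J$; each such event has probability $\le\alpha$. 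Combining this with the exponential-in-$j$ type geometric decay (analogous to~\eqref{eq:overestimation:bound}, but now counted \emph{per segment} rather than globally, which is why the bound is $2\alpha/(1-\alpha)$ rather than $\alpha$) gives, for each $k$,
\begin{equation}\label{eq:plan:local}
\E{L_k \mid \text{the rest}} \le \frac{2\alpha}{1-\alpha}\,\bigl(1 + (\text{number of true }\hat\mu\text{-segments touching }I_k)\bigr),
\end{equation}
or some variant thereof; summing over $k$ and using $\sum_k(\cdots)\le \hat K+1$ yields $\E{\mathrm{FD}} \le \beta\,\E{\hat K+1}$, i.e.\ $\mathrm{FDR}\le\beta$. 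The restriction $\alpha<1/3$ is precisely what keeps $\beta=2\alpha/(1-\alpha)<1$, so it enters only at the very end to make the statement non-vacuous.

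The main obstacle, I expect, is the bookkeeping that turns the per-segment local control into the ratio~\eqref{def:fdr}: one must carefully account for $\hat\mu$-segments that straddle a true change-point (so are shared between two consecutive $I_k$), ensure no false discovery is double-counted, and handle the boundary segments $I_0$ and $I_K$ and the $+1$ in the denominator $\hat K+1$. A secondary technical point is making rigorous the claim that $\ell$ false discoveries inside a true segment force $\ell$ (or at least $\lceil\ell/2\rceil$) disjoint ``bad'' multiscale events with the right quantile index, so that independence or at least a union bound with the factor-2 loss can be applied — this is the step where the constant $2$ in $\beta=2\alpha/(1-\alpha)$ is actually produced, mirroring the $\lceil j/2\rceil$ exponent in~\eqref{eq:overestimation:bound}. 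Everything else — the structure of $\mathcal{C}_k$, the definition of true/false discovery, and the uniform boundedness of the $q_\alpha(m)$ from Remark~\ref{rem:quantiles:uniform:bounded} — should feed in directly.
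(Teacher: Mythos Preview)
Your proposal has the right overall architecture --- decompose over constant pieces of the truth, bound false discoveries locally, and reassemble --- but it contains two genuine gaps that would make the argument fail as written.

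\medskip
\textbf{1. The passage from $\E{\mathrm{FD}}\le\beta\,\E{\hat K+1}$ to $\mathrm{FDR}\le\beta$ is invalid.} The FDR is $\E{\mathrm{FD}/(\hat K+1)}$, not $\E{\mathrm{FD}}/\E{\hat K+1}$, and the former is not bounded by the latter in general. The paper does \emph{not} bound $\E{\mathrm{FD}}$ against $\E{\hat K+1}$; instead it conditions on the number of \emph{true} discoveries and proves $\E{\mathrm{FD}\mid \mathrm{TD}=\kappa}\le(\kappa+1)G(\alpha)$ with $G(\alpha)=2\alpha/(1-3\alpha)$. Since $\hat K+1=\mathrm{FD}+\mathrm{TD}+1$, one then applies Jensen's inequality to the concave map $x\mapsto x/(x+y)$ to get
\[
\mathrm{FDR}=\E{\E{\frac{\mathrm{FD}}{\mathrm{FD}+\mathrm{TD}+1}\,\Big|\,\mathrm{TD}}}\le \E{\frac{\E{\mathrm{FD}\mid\mathrm{TD}}}{\E{\mathrm{FD}\mid\mathrm{TD}}+\mathrm{TD}+1}}\le \frac{G(\alpha)}{1+G(\alpha)}=\frac{2\alpha}{1-\alpha}.
\]
This Jensen step is the missing idea in your plan; without it you cannot get from a bound on expectations to a bound on the expected ratio.

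\medskip
\textbf{2. The local bound, the source of the constant $2$, and the role of $\alpha<1/3$ are all misidentified.} The per-segment bound is \emph{not} $2\alpha/(1-\alpha)$ but $G(\alpha)=2\alpha/(1-3\alpha)$, and it does not come from a $\lceil j/2\rceil$-type disjoint-interval argument as in~\eqref{eq:overestimation:bound}. The paper's mechanism is recursive: on a constant signal one constructs an auxiliary candidate $\tilde\mu$ by iteratively checking the multiscale constraint with the empirical mean; each failure (probability $\le\alpha$) forces a split into at most three subintervals and introduces at most two change-points. Independence of $T_{I_k}(Y,\bar Y_{I_k})$ from the triggering event (using that the centred partial sums are independent of the overall mean) gives $\E{D_k}\le 2\alpha(3\alpha)^{k-1}$, and summing the geometric series yields $G(\alpha)$. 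Thus $\alpha<1/3$ is \emph{not} cosmetic: it is exactly what makes $3\alpha<1$ so that this series converges. Your claim that the restriction ``enters only at the very end to make the statement non-vacuous'' is incorrect, and the union-bound-with-factor-2-loss picture you sketch would not produce the right constant or the right dependence on $\alpha$.
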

\begin{proof}
See Appendix~\ref{sec:proof_main_theorem}.
\end{proof}

\begin{rem}[Discussion of the bound]\label{rem:diss:bound}
Various simulation studies (not displayed) suggest even the bound $\mbox{FDR } \le \alpha$, improving~\eqref{eq:thm:bnd} by a factor of 2. Although we were not able to prove this, we stress that this might be useful for practical purpose to select and interpret $\alpha$. For example in Figure~\ref{fig:fdr:simulation:bound} we display results for the teeth signal (see Figure~\ref{fig:relation_choice_alpha_est}), where the FDR is estimated by the empirical mean of 1,000 repetitions with $n=600$. It shows that the bound~\eqref{eq:thm:bnd} (dashed line) is good when $\alpha$ is small, and gets worse as $\alpha$ increases. 
\end{rem}

\begin{figure}[t]
\centering
 \includegraphics[width=0.50\columnwidth]{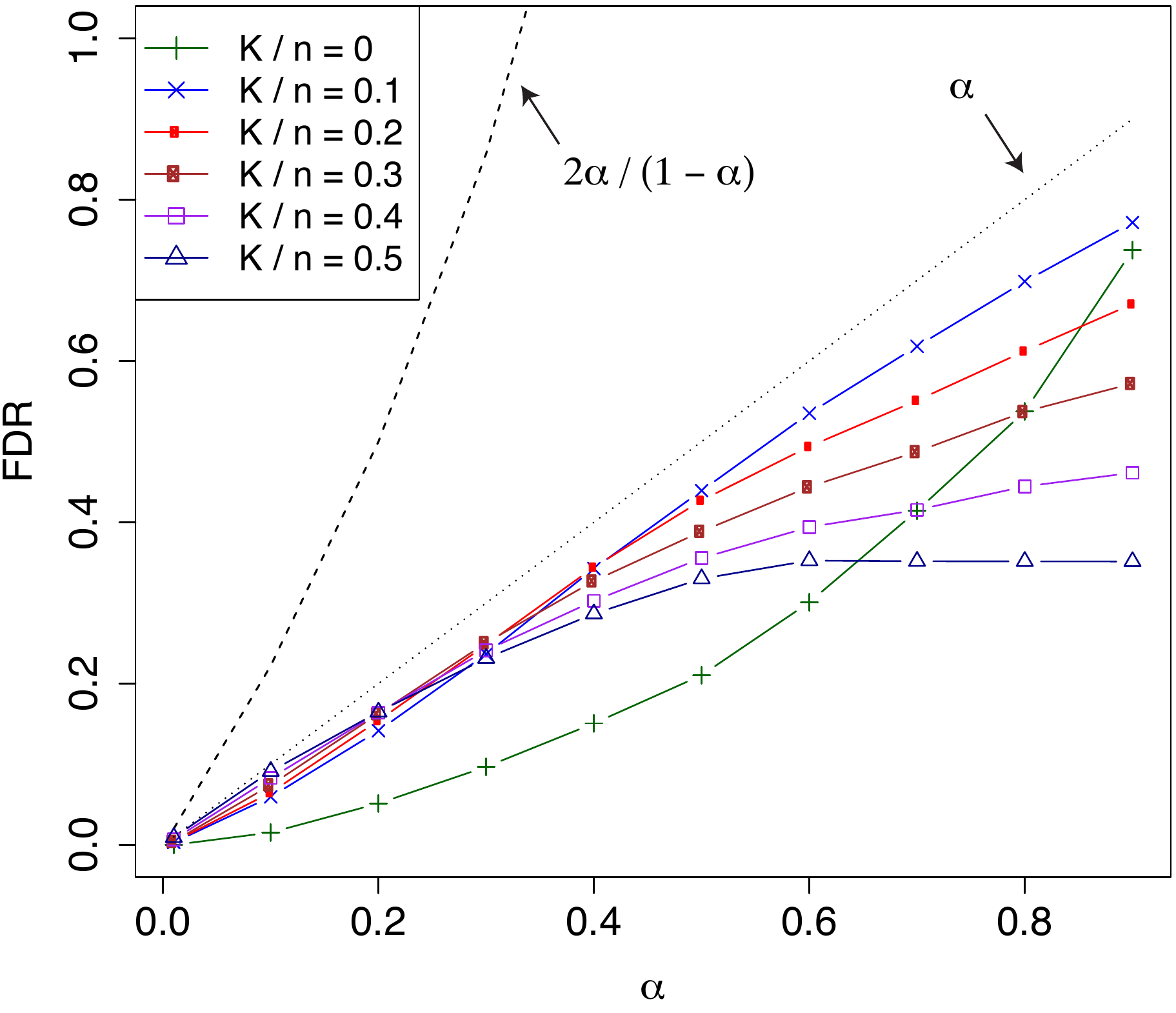}
% \put(-125,175){\vector(-1,-1){10}}
% \put(-120,180){$\frac{2\alpha}{1-\alpha}$}
% \put(-125,110){\vector(1,-1){10}}
%\put(-135,115){$\alpha $}
 \caption{Simulation on the bound of FDR.  }
 \label{fig:fdr:simulation:bound}
\end{figure}

\begin{rem}[Choice of parameter for FDRSeg]{
Note that Theorem~\ref{thm:mainthm} provides a statistical guidance for the choice of the only parameter $\alpha$ for FDRSeg. To calibrate the method for given $\beta$, we simply rewrite~\eqref{eq:thm:bnd} into
\[
\alpha = \frac{\beta}{2+\beta},
\]
which is roughly, $\alpha = \beta/2$ for small $\beta$, see Figure~\ref{fig:fdr:alpha}. In practice, one could even use $\alpha = \beta$ as discussed in Remark~\ref{rem:diss:bound}. We further stress that FDRSeg is actually robust to the choice of $\beta$ (or $\alpha$), as we have already seen in Figure~\ref{pic:intro}.}
\end{rem}

\begin{figure}[!h]
\centering
 \includegraphics[width=0.48\columnwidth]{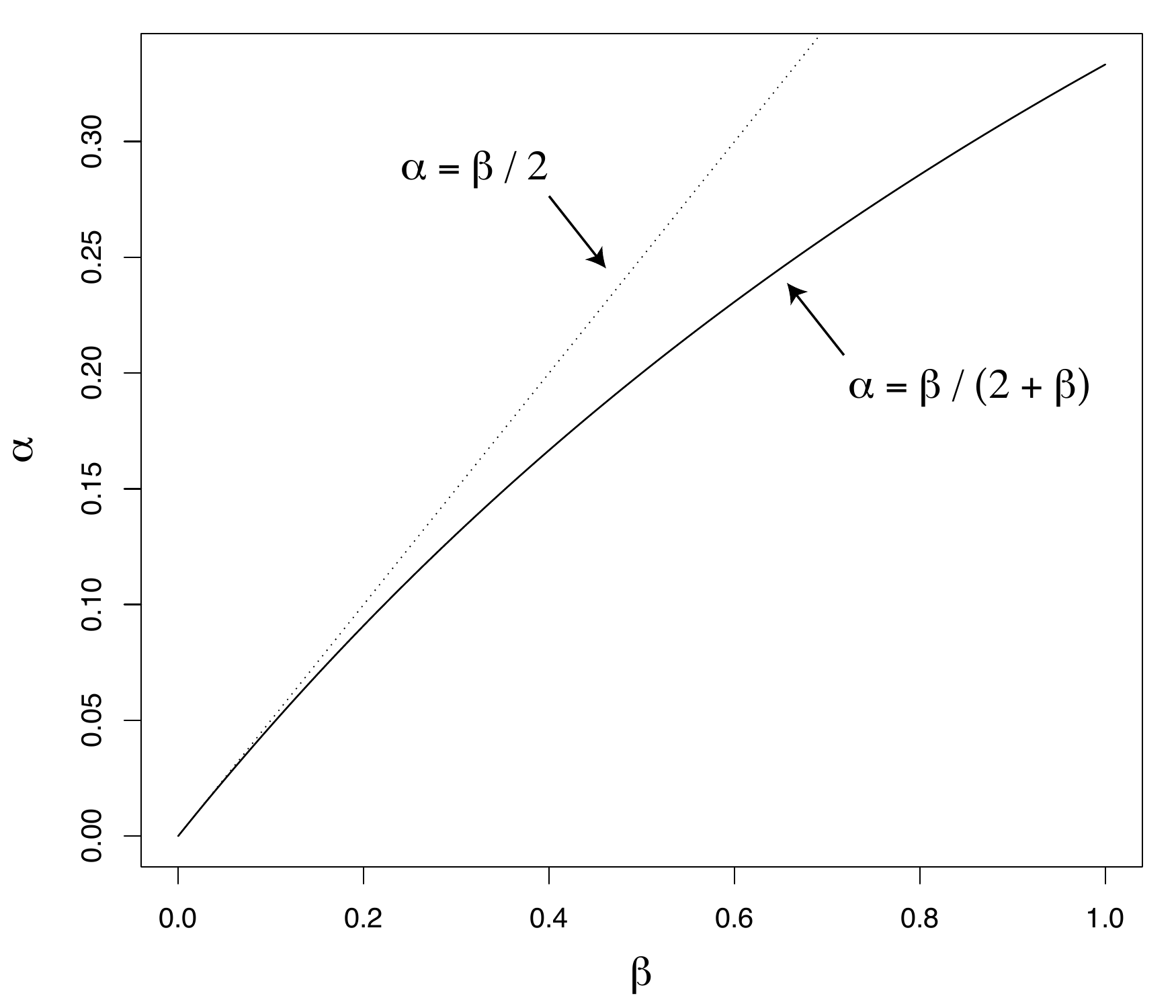}
% \put(-125,145){\vector(1,-1){10}}
% \put(-160,150){$\alpha = \frac{\beta}{2}$}
% \put(-70,120){\vector(-1,1){10}}
%\put(-65,110){$\alpha =\frac{\beta}{2+\beta} $}
 \caption{Relation between the tuning parameter $\alpha$ and the bound of FDR $\beta$. }
 \label{fig:fdr:alpha}
\end{figure}

\begin{figure}[!h]
\centering
\includegraphics[width=0.95\columnwidth]{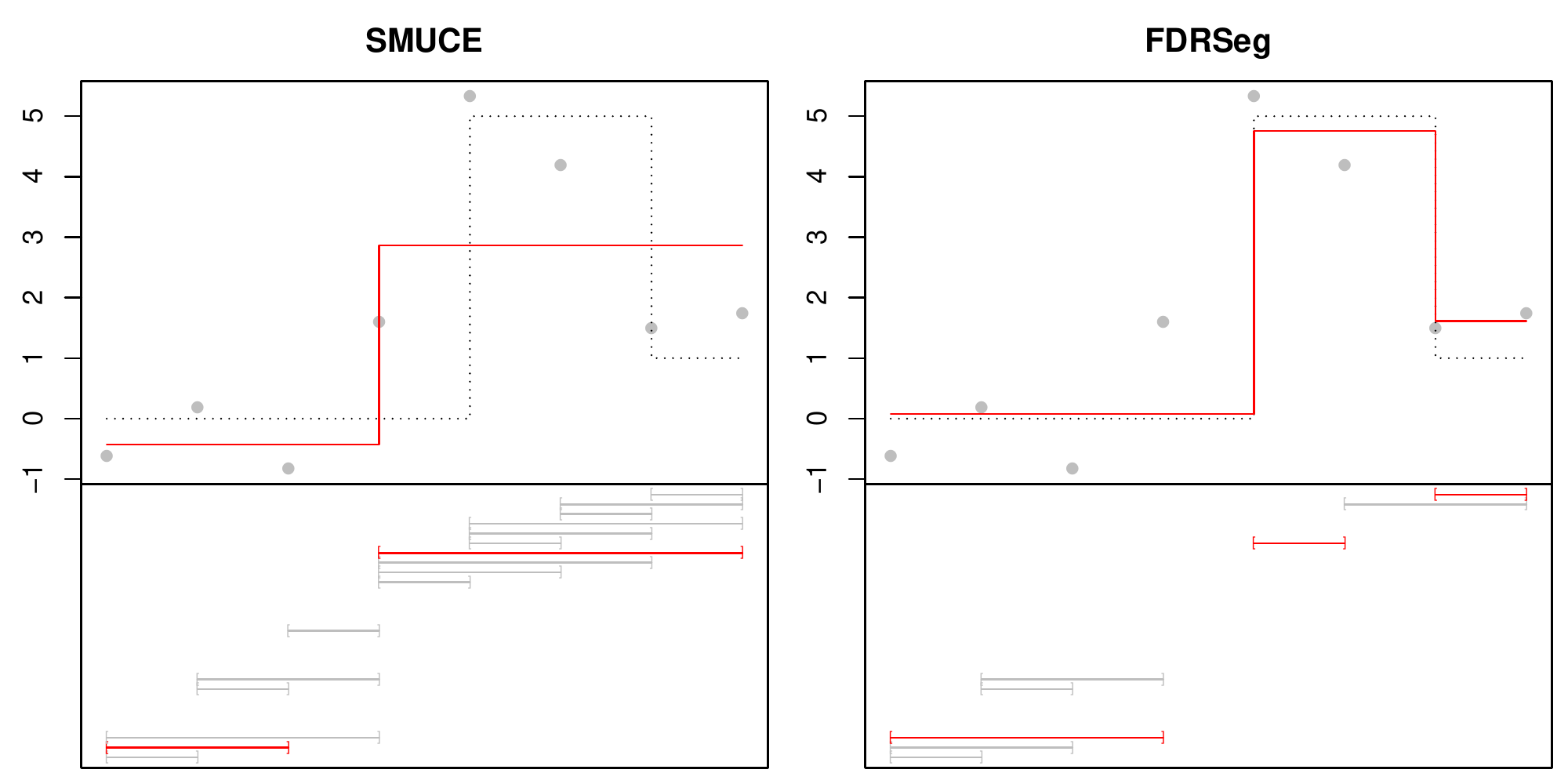}
\caption{Difference between SMUCE and FDRSeg. The upper plots show the two estimates (solid line), respectively, together with the truth (dotted line) and the data (points). The lower left (right) shows all the intervals on which there is a constant function satisfying the multiscale side-constraint of SMUCE (FDRSeg),  with red ones chosen by the estimator, separately.}
\label{pic:diff_fs_s}
\end{figure}

\begin{rem}[Comparison of SMUCE and FDRSeg]
Let us stress some notable differences to SMUCE~\citep{FriMunSie14}, which is based on restricting possible estimators to 
\begin{equation*}%\label{side-constraint:smuce}
\mathcal{C}_k^0 =  \left\{\mu =  \sum_{i=0}^{k} c_i \mathbf{1}_{I_i}: \max_{i=0,\ldots,k} T^0_{I_i}(Y, c_i) \le \tilde{q}_{\alpha_S}\right\},
\end{equation*}
where $T^0_I(Y, c)$ is as in~\eqref{eq:test:stat:segment}, with penalty $\text{pen} ({(j-i+1)}/{n})$ instead, and 
\begin{equation}\label{eq:smue:quantile}
\tilde{q}_{\alpha_S} = \tilde{q}_{\alpha_S}(n) := \min\left\{q: \P\left\{T^0_{[0,1)}(\varepsilon, 0) > q\right\} \le \alpha_S\right\}, \quad \varepsilon \sim \mathcal{N}(0, I_n).
\end{equation}
%\[
% T^0_I(Y, c) = \max_{ [i/n,j/n] \subset I }  \frac{\left| \sum_{l=i}^{j} (Y_l- c) \right|}{\sigma \sqrt{j-i+1}} - \text{pen} \left(\frac{j-i+1}{n}\right).
%\]
Firstly, this penalty term underlying SMUCE  on the interval $[i/n\,,j/n]$ only relates the ratio between the number of observations in $[i/n,\, j/n]$ and all the observations, while that of FDRSeg relies on the ratio between the number of observations in $[i/n,\, j/n]$ and the corresponding segment length of ${I}$. This modification has a flavor similar to the refined Bayes information criterion type of penalty in~\citep{ZhaSie07}. Secondly, the parameter $\alpha_S$ of SMUCE ensures that the true signal lies in the side-constraint $\mathcal{C}^0_K$ with probability at least $1-\alpha_S$. In contrast, FDRSeg considers constant parts of the true signal individually, guaranteeing that the mean value of each segment $I_i$ lies in its associated side-constraint in $\mathcal{C}_K$ with probability at least $1-\alpha$.  This makes it much less conservative, and its error controllable in terms of FDR (see Theorem~\ref{thm:mainthm}). This is a key idea underlying FDRSeg.  For an illustration of this effect see Figure~\ref{pic:diff_fs_s}. Thirdly, {the thresholding underlying SMUCE is based on 
%the worst case scenario, i.e. $\mu=0$ on all the intervals, see~\eqref{eq:smue:quantile}, since the exact system of intervals on which the true signal is constant is unknown. In general, this leads to a larger quantile than what is required, making the method more conservative than necessary. 
a global quantile. 
In contrast, for FDRSeg, the quantiles $q_{\alpha}$ in~\eqref{def:qalpha} are locally chosen according to the scale, }revealing the resulting method less conservative. Note, that $q_{\alpha}(m)$ in~\eqref{def:qalpha} and $\tilde{q}_{\alpha_S}(n)$ in~\eqref{eq:smue:quantile} are even different  when $\alpha = \alpha_S$ and $m = n$. Simulations show that $q_{\alpha}(n) < \tilde{q}_{\alpha}(n)$ for every $\alpha$ and $n$, see Figure~\ref{fig:comp:quantiles}. This again highlights that FDRSeg detects more change-points than SMUCE. 

\begin{figure}[!h]
\centering
\includegraphics[width=0.95\columnwidth]{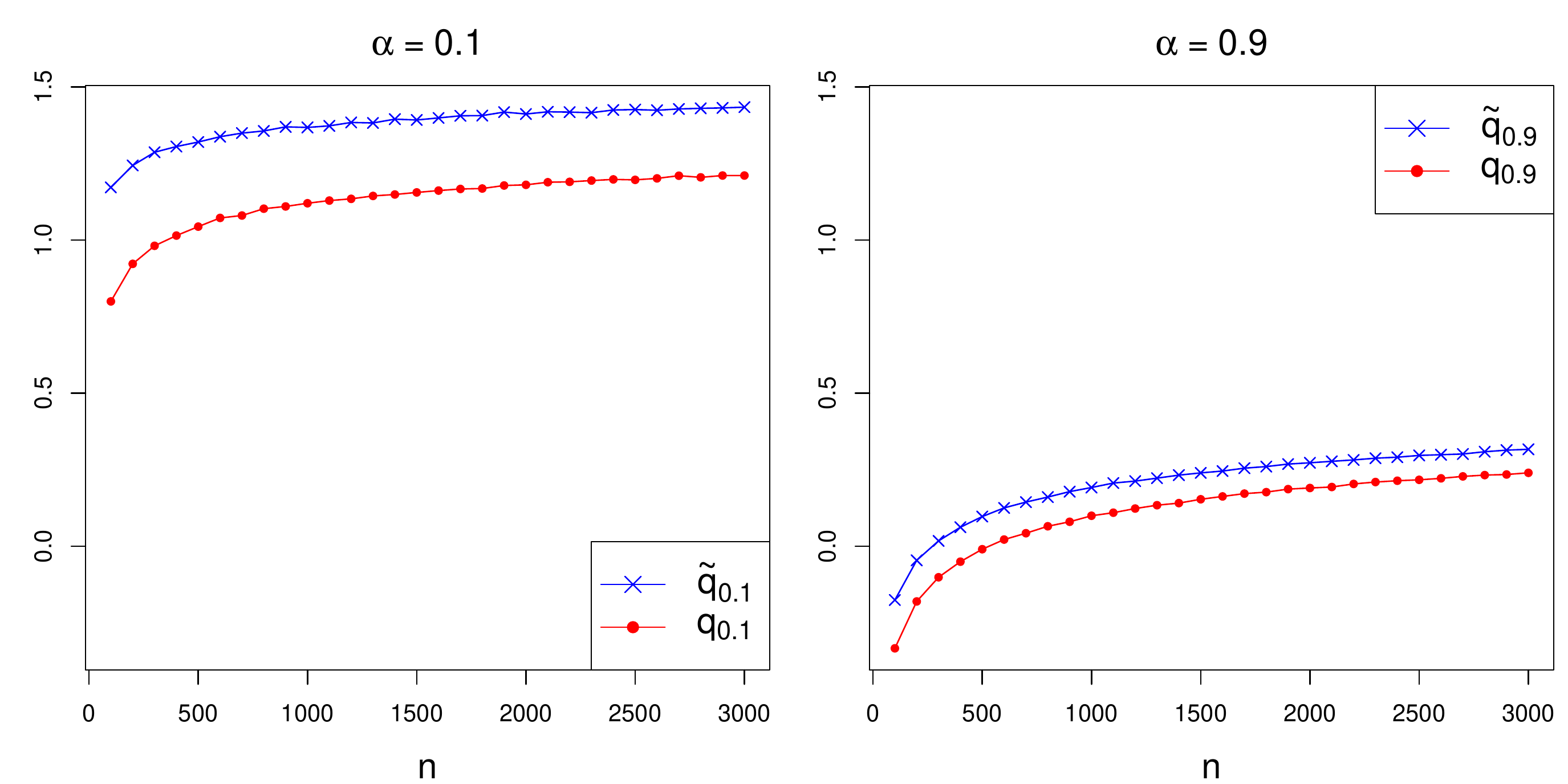}
 \caption{Comparison of $q_{\alpha}(n)$ and $\tilde{q}_{\alpha}(n)$ for various $n$. Each value is estimated by 100,000 simulations. }
 \label{fig:comp:quantiles}
\end{figure}

In situations with many change-points or low SNR, to overcome the conservative nature of SMUCE, the significance level ${\alpha_S}$ in~\eqref{eq:overestimation:bound:alpha} {to control} the overestimation error, has been suggested to be chosen close to one to produce an estimate with good screening properties~\citep{FriMunSie14}, although then the confidence statements in~\eqref{eq:overestimation:bound} and~\eqref{eq:overestimation:bound:alpha} becomes statistically meaningless.
 It follows from the arguments above that the parameter $\alpha$ of FDRSeg relates to $\alpha_S$ roughly by
\begin{equation}\label{eq:relation_choice_alpha}
1 - (1-\alpha)^{K+1} \approx \alpha_S
\end{equation}
because the probability of coverage of the true signal by $\mathcal{C}_K$ is $(1-\alpha)^{K+1}$,
where $K$ is the true number of change-points. This is confirmed by simulations. For example, consider the recovery of a teeth signal~(adopted from \citep{Fry14}) with $K = 50$ from $900$ observations contaminated by standard Gaussian noise, see Figure~\ref{fig:relation_choice_alpha_est}.  In Figure~\ref{fig:relation_choice_alpha_hist}, the histogram of estimated number of  change-points by SMUCE ($\alpha_S = 0.1$) and FDRSeg ($\alpha=0.1$) are shown in white bars from 1,000 repetitions.  It can be seen that SMUCE ($\alpha_S = 0.1$) seriously underestimates the number  of change-points, while FDRSeg estimates the right number of change-points with high probability. If we adjust $\alpha_S$ according to~\eqref{eq:relation_choice_alpha}, i.e. $\alpha_S = 1-(1-0.1)^{51} \approx 0.995$, this leads to a significant improvement of detection power of SMUCE, as is shown by the corresponding histogram of estimated number of change-points in grey bars (left panel in Figure~\ref{fig:relation_choice_alpha_hist}), however, at the expense of any reasonable statistical error control, i.e. the control of overestimating the true $K$ for SMUCE becomes increasingly more difficult as $K$ gets larger.
%In practice, the true number of change-points $K$ is generally unknown, making such adjustment of $\alpha_S$ impossible. 
On the other hand, FDRSeg adapts to $K$ automatically, and works well with a choice of small values of $\beta$ in~\eqref{eq:thm:bnd}. 
Moreover, concerning the accuracy of locations, the medians of $d(\mu,\cdot)$, see~\eqref{eq:def:accLoc}, of SMUCE ($\alpha_S = 0.995$) and FDRSeg ($\alpha=0.1$) have been found as $0.0178$ and $0.0078$, respectively, while such medians conditioned on $\hat K = K$ have the same value, $0.0067$. 
%This indicates that the requirement $0<\alpha<1/3$ in Theorem~\ref{thm:mainthm} causes almost no limitation in practice. 
This confirms the visual impression when comparing the two lower panels in Figure~\ref{fig:relation_choice_alpha_est}: Local thresholding in~\eqref{def:qalpha} and~\eqref{side-constraint:fdrsmuce} makes an important difference to SMUCE. 
\end{rem}

\begin{figure}[!h]
\centering
 \includegraphics[width=0.95\columnwidth]{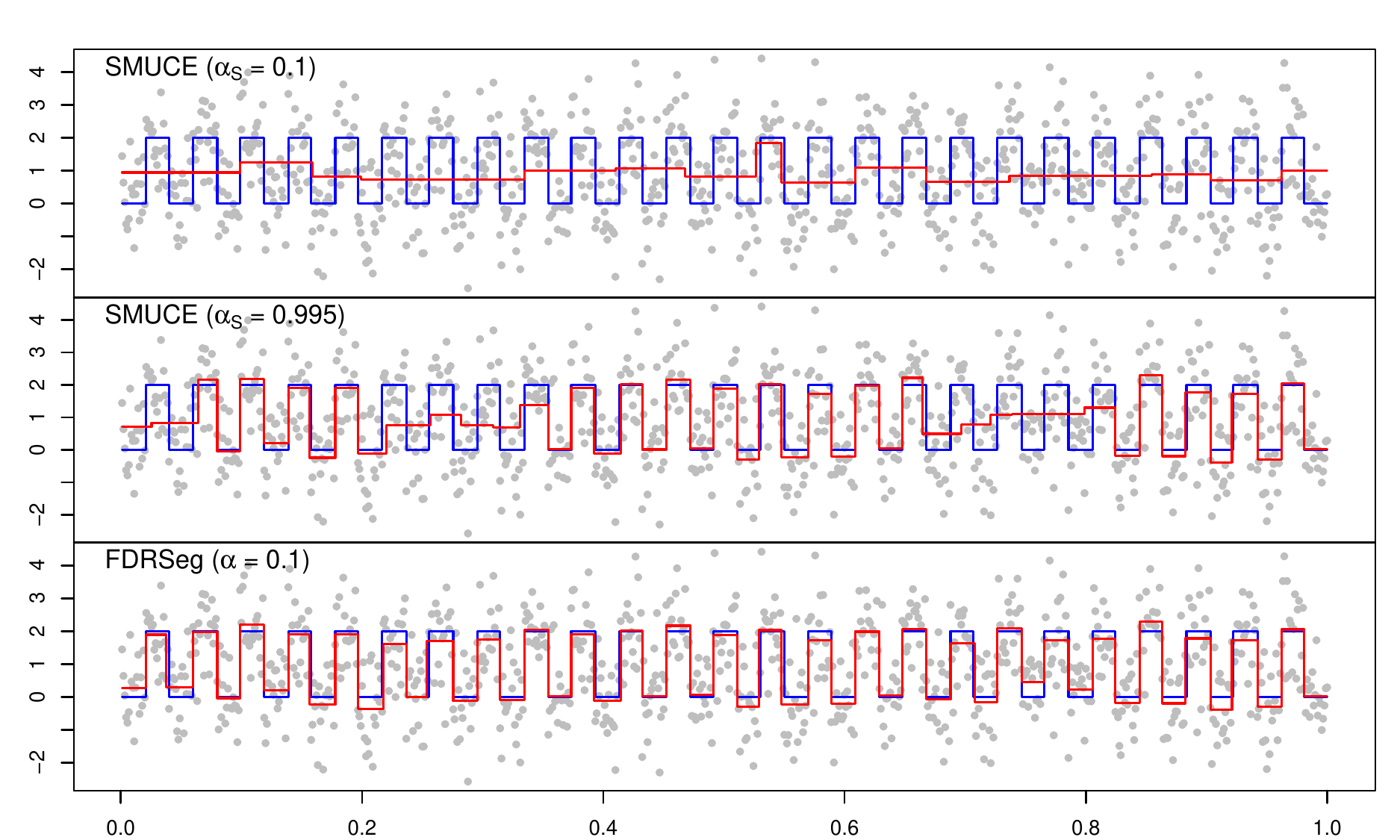}
 \caption{Estimation of teeth signal ($K = 50, \, n = 900$) by SMUCE ($\alpha_S=0.1$),  SMUCE ($\alpha_S=0.995$) and FDRSeg ($\alpha=0.1$). The true signal (blue line), together with data (points),  is shown in  each panel. }
 \label{fig:relation_choice_alpha_est}
\end{figure}

\begin{figure}[!h]
\centering
 \includegraphics[width=0.95\columnwidth]{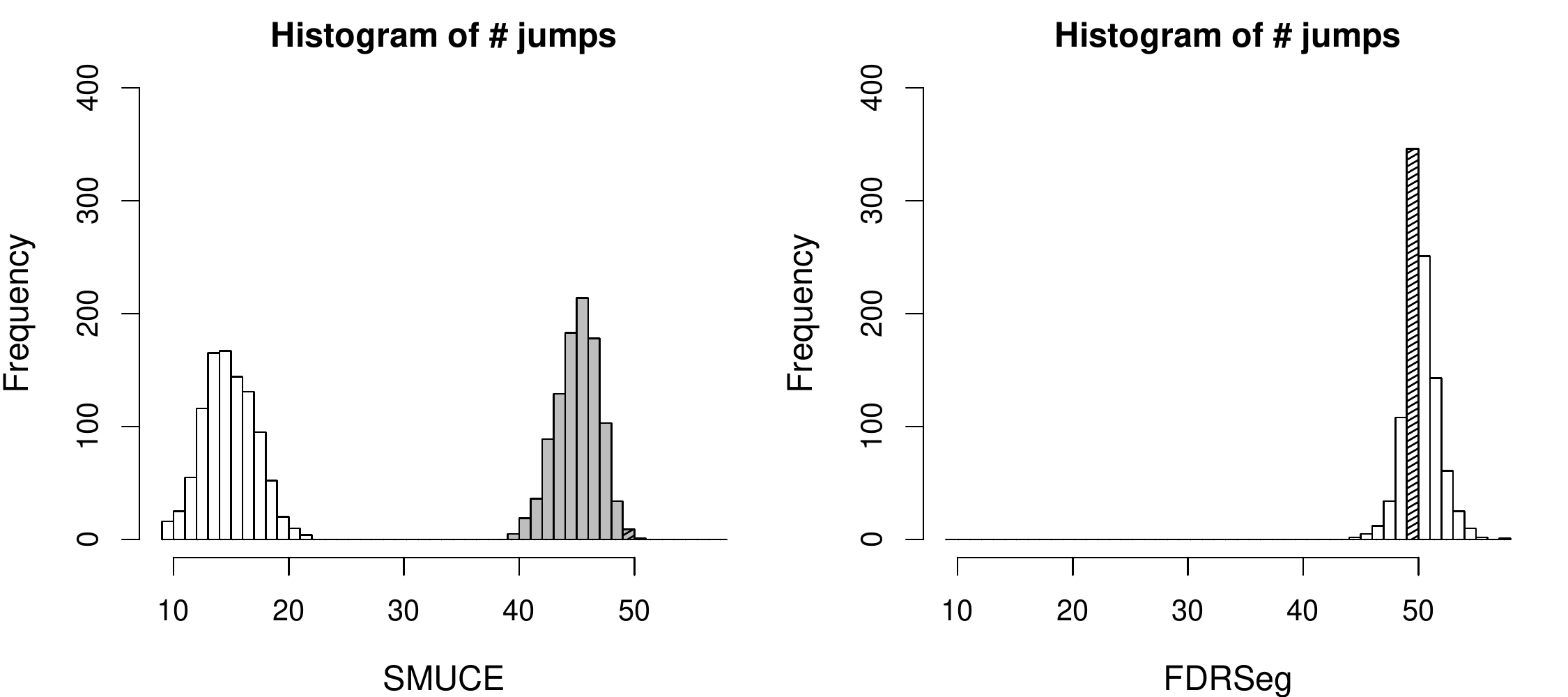}
 \caption{Histogram of number of change-points for SMUCE ($\alpha_S=0.1$, left in white bars),  SMUCE ($\alpha_S=0.995$, left in grey bars) and FDRSeg ($\alpha=0.1$, right in white bars). The shaded bars correspond to the true number of change-points $50$. The number of simulations is 1,000. }
 \label{fig:relation_choice_alpha_hist}
\end{figure}

\section{Risk bounds for FDRSeg}\label{sec:risk:fdrseg}

In order to state uniform results on the $L^p$-risk of $\hat \mu$ and on the simultaneous estimation of the change-point locations, we define the smallest segment length $\lambda_{\mu}$ of a step function $\mu$ in~\eqref{eq:true_signal} by
\[
\lambda_{\mu}  := \min_{0 \le k \le K} \abs{\tau_{k+1} - \tau_{k}},
\]
and the smallest jump size $\Delta_{\mu}$ of $\mu$ by
\[
\Delta_{\mu} :=  \min_{1 \le k \le K} \abs{c_k - c_{k-1}}, \quad \text{if } K_{\mu} \ge 1.
\]
The subscript $\mu$ will be suppressed in the following, if there is no ambiguity. Note, that no method  can recover arbitrary fine details measured in terms of {$\lambda$ and $\Delta$} for given sample size $n$. More precisely, the detection boundary for testing $\mu_n := \Delta_n \mathbf{1}_{I_n}$ against a zero signal asymptotically is given as
\begin{equation}\label{eq:test:boundary}
\frac{\Delta_n}{\sigma} \sqrt{\abs{I_n}} \ge \sqrt{\frac{2\log \abs{I_n}^{-1}}{n}} + a_n^{-1} \quad \text{ with } a_n = o(\sqrt{n}),
\end{equation}
see ~\citep{ChaWal13,FriMunSie14}. It is worth noting that FDRSeg detects such signals~\eqref{eq:test:boundary} with asymptotic power 1, {provided that the level $\alpha = \alpha_n$ is bounded away from 0,} 
%provided that $ a_n = o\left(\sqrt{n}/\max_{m \le n} q_{\alpha}(m)\right)$, 
as $n \to \infty$. The proof is omitted because it is similar to \citep[Theorem 5]{FriMunSie14}. 

{In the following we will show how} $\lambda$ and $\Delta$ determine the detection and estimation difficulty for step functions with multiple change-points (cf.  Theorems~\ref{thm:accuracy:location} and \ref{thm:convergence:rate:LpRisk}) {in a non-asymptotic way.} 
The following exponential bound for the estimated locations provides a theoretical justification of the previous empirical findings (see also Section~\ref{sec:sim_app}) of the good detection and estimation performance of FDRSeg. 
\begin{thm}\label{thm:accuracy:location}
{Assume the change-point regression model~\eqref{model_1} with {signal} $\mu$ in~\eqref{eq:true_signal}, and let} $(x)_+ := \max\{x, 0\}$, $\delta_{\lambda} :=  \min\{\delta,\lambda/2\}$, and $d(\mu,\hat\mu)$ defined in~\eqref{eq:def:accLoc}
%and $\mathcal{C}_{\hat{K}}$ as in~\eqref{side-constraint:fdrsmuce},~\eqref{def:Khat}. 
{Then for  the FDRSeg $\hat\mu$ in~\eqref{def:fdr_smuce}, the following {statements} are valid: }
\begin{itemize}
\item[(i)]
It holds for any $\delta > 0$ that
\begin{align}
\P\left\{
%\sup_{\hat\mu\in\mathcal{C}_{\hat{K}}}
d(\mu,\hat\mu) > \delta\right\} \le &2K \exp\left(-\frac{1}{8}\left(\frac{\Delta\sqrt{n\delta_{\lambda}}}{2\sigma}-\max_{m \le n} q_{\alpha}(m) -\sqrt{2\log \frac{e}{\delta_{\lambda}}}\right)_+^2\right) \nonumber\\
&+2K\exp\left(-\frac{n\Delta^2\delta_{\lambda}}{8\sigma^2}\right). \label{eq:bound:location:accuracy}
\end{align}
\item[(ii)]
{Let $K \ge 1$, $\alpha \equiv \alpha_n \gtrsim n^{-\gamma}$ with $\gamma\ge0$}, and assume  
\begin{equation*}%\label{eq:location:accuracy:vanish:condition}
\delta_{\lambda} \ge \frac{8\left((\sqrt{\gamma}+1)\sqrt{\log n} + 2\sqrt{\log K}\right)^2}{n \min\left\{(\Delta/\sigma)^2, 1\right\}}(1+\epsilon),% \qquad\text{if } K \ge 1,
\end{equation*}
for some positive $\epsilon$ independent of $n$, then 
\[
\lim_{n \to \infty} \P\left\{
%\sup_{\hat\mu_n\in\mathcal{C}_{\hat{K}_n}}
d(\mu,\hat\mu_n) > \delta\right\} = 0.
\]
In particular, for every $C > 8(\sqrt{\gamma}+3)^2$, we have
\[
\lim_{n \to \infty} \sup_{\mu \in A_{n}} \P\left\{d(\mu,\hat\mu_n) > C\frac{\log n}{n}\right\} = 0,
\]
where
\[
A_{n}:=\left\{\text{step signal }\mu {\text{ in \eqref{eq:true_signal}, s.t.}} \, \lambda_{\mu}\ge 2C\frac{\log n}{n} \text{ and } \Delta_{\mu} \ge \sigma, \text{ if } K_{\mu} \ge 1\right\}.
\]
\end{itemize}
\end{thm}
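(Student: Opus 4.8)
The plan is to reduce \eqref{eq:bound:location:accuracy} to a per--change-point estimate and sum. Since $d(\mu,\hat\mu)>\delta$ forces some true $\tau_k$ to have $\min_{0\le j\le\hat K+1}\abs{\tau_k-\hat\tau_j}>\delta$ (and $k=0,K+1$ contribute nothing because $\hat\tau_0=0=\tau_0$, $\hat\tau_{\hat K+1}=1=\tau_{K+1}$), a union bound over $k=1,\dots,K$ leaves me to bound $\P(E_k)$ with $E_k:=\{\min_j\abs{\tau_k-\hat\tau_j}>\delta\}$. On $E_k$, $\hat\mu$ has no jump in $(\tau_k-\delta,\tau_k+\delta)$, so it is constant, say $\equiv\hat c$, on the segment $J$ of $\hat\mu$ through $\tau_k$, and $J\supseteq[\tau_k-\delta,\tau_k+\delta)$. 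I use $\delta_\lambda=\min\{\delta,\lambda/2\}\le\lambda/2$ exactly so that the two adjacent sampling windows $I_-:=[\tau_k-\delta_\lambda,\tau_k)$ and $I_+:=[\tau_k,\tau_k+\delta_\lambda)$ lie in $(\tau_{k-1},\tau_k)$ resp.\ $[\tau_k,\tau_{k+1})$; hence $\mu\equiv c_{k-1}$ on $I_-$, $\mu\equiv c_k$ on $I_+$, and both are sub-intervals of $J$.

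Next I exploit feasibility. Writing $m_\pm=\#I_\pm$ (so $m_\pm\ge n\delta_\lambda$ up to rounding) and $\bar Y_{I_\pm}$ for the empirical mean of $Y$ over $I_\pm$, the side-constraint $\hat\mu\in\mathcal C_{\hat K}$ forces $T_J(Y,\hat c)\le q_\alpha(\#J)$; since the statistic \eqref{eq:test:stat:segment} maximizes over all sub-intervals of $J$, restricting to $I_-$ gives
\[
\frac{\sqrt{m_-}\,\abs{\bar Y_{I_-}-\hat c}}{\sigma}\ \le\ q_\alpha(\#J)+\mathrm{pen}\!\left(\frac{m_-}{\#J}\right)\ \le\ \max_{m\le n}q_\alpha(m)+\sqrt{2\log\frac{e}{\delta_\lambda}},
\]
using $\#J\le n$ and $m_-\ge n\delta_\lambda$ in the last step (and likewise on $I_+$). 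Now $\abs{\hat c-c_{k-1}}+\abs{\hat c-c_k}\ge\Delta$, so $\hat c$ is at distance $\ge\Delta/2$ from $c_{k-1}$ or from $c_k$; in the first case, since $\bar Y_{I_-}=c_{k-1}+\sigma Z_-/\sqrt{m_-}$ with $Z_-\sim\mathcal N(0,1)$, the displayed bound and the triangle inequality force
\[
\abs{Z_-}\ \ge\ \frac{\sqrt{m_-}\,\Delta}{2\sigma}-\max_{m\le n}q_\alpha(m)-\sqrt{2\log\frac{e}{\delta_\lambda}}\ \ge\ \frac{\Delta\sqrt{n\delta_\lambda}}{2\sigma}-\max_{m\le n}q_\alpha(m)-\sqrt{2\log\frac{e}{\delta_\lambda}},
\]
which is precisely the argument of $(\cdot)_+$ in \eqref{eq:bound:location:accuracy}; the standard Gaussian tail bound then yields the first summand, the factor $2K$ absorbing the union over $k$ and over the two sides $I_-,I_+$ (and the two-sided tail). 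The second summand $2K\exp(-n\Delta^2\delta_\lambda/(8\sigma^2))$ I would obtain as a complementary crude estimate covering the residual regime where the clean argument is unavailable (e.g.\ when $\bar Y_{I_\pm}$ itself strays from the true segment value by more than $\Delta/2$, or to absorb the rounding in $m_\pm$ and the half-open endpoint of $I_-$). This proves (i).

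For (ii) I would substitute (i) and check that both terms vanish. I need: (a) from the exponential tails of the multiscale statistic (cf.\ Remark~\ref{rem:quantiles:uniform:bounded} and \citep{DueSpok01}), $\alpha\equiv\alpha_n\gtrsim n^{-\gamma}$ gives $\max_{m\le n}q_{\alpha_n}(m)\le(\sqrt{2\gamma}+o(1))\sqrt{\log n}$ (bounded for $\gamma=0$); (b) the assumed lower bound on $\delta_\lambda$ also gives $\delta_\lambda\gtrsim\log n/n$, hence $1/\delta_\lambda\le n$ and $\sqrt{2\log(e/\delta_\lambda)}\le\sqrt2+\sqrt{2\log n}$. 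Rewriting the assumption (in both branches of $\min\{(\Delta/\sigma)^2,1\}$) as $\frac{\Delta\sqrt{n\delta_\lambda}}{2\sigma}\ge\sqrt{2(1+\epsilon)}\bigl((\sqrt\gamma+1)\sqrt{\log n}+2\sqrt{\log K}\bigr)$ and subtracting (a),(b), the $(\cdot)_+$ argument in \eqref{eq:bound:location:accuracy} is bounded below by $c_\epsilon\sqrt{\log n}+2\sqrt{2(1+\epsilon)}\sqrt{\log K}-\sqrt2$ with $c_\epsilon=\sqrt2(\sqrt{1+\epsilon}-1)>0$, whose square eventually exceeds $8\log K$ and diverges; similarly $n\Delta^2\delta_\lambda/(8\sigma^2)\ge(1+\epsilon)\bigl((\sqrt\gamma+1)\sqrt{\log n}+2\sqrt{\log K}\bigr)^2\ge(1+\epsilon)(\log n+4\log K)$. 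Hence $2K\exp(\cdots)\to0$ for both summands, which is the first claim of (ii). For the $A_n$ claim, set $\delta=C\log n/n$; for $\mu\in A_n$ we get $\delta_\lambda=\delta=C\log n/n$ (as $\lambda\ge2C\log n/n$) and $\min\{(\Delta/\sigma)^2,1\}=1$ (as $\Delta\ge\sigma$), and using $K\le n-1$ (so $\sqrt{\log K}\le\sqrt{\log n}$) the assumption reduces to $C\ge8(1+\epsilon)(\sqrt\gamma+3)^2$, which is satisfiable for small $\epsilon>0$ since $C>8(\sqrt\gamma+3)^2$. As the bound in (i) depends on $\mu$ only through $K,\Delta,\lambda$ and every estimate above is uniform over $A_n$ (the case $K_\mu=0$ being trivial, as then $d(\mu,\hat\mu)=0$), the supremum over $A_n$ also tends to $0$.

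The main obstacle is the constant bookkeeping in (i): making $m_\pm\ge n\delta_\lambda$ (hence $\mathrm{pen}(m_\pm/\#J)\le\sqrt{2\log(e/\delta_\lambda)}$) rigorous despite the discretization and the endpoint of $I_-$, organizing the $c_{k-1}$-versus-$c_k$ case split, and pinning down the complementary estimate that produces the second summand with the stated constant $\tfrac18$. A secondary but essential point for (ii) is the \emph{uniform-in-$m$} growth rate of $\max_{m\le n}q_{\alpha_n}(m)$ with the sharp leading constant $\sqrt{2\gamma\log n}$: it is this ``$\sqrt\gamma$'' (together with the ``$+1$'' from the penalty $\sqrt{2\log(e/\delta_\lambda)}$ and the ``$+2$'' from $\sqrt{\log K}\le\sqrt{\log n}$) that makes the threshold $8(\sqrt\gamma+3)^2$ for $A_n$ come out.
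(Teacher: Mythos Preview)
Your proposal is correct and follows the same approach as the paper: for (i) the paper simply defers to Theorem~7 of \citep{FriMunSie14} with $q$ replaced by $\max_{m\le n}q_\alpha(m)$, which is exactly the per--change-point feasibility-plus-Gaussian-tail argument you sketch (and yes, the second summand arises as the tail $\P\{|\bar Y_{I_\pm}-c_\pm|\ge\Delta/2\}$, just as you guessed). For (ii) the paper formalizes your point~(a) as its Lemma~\ref{lem:bnd:quantile}, proving $\sup_m q_\alpha(m)\le C+\sqrt{2\log(1/\alpha)}$ via Gaussian concentration for Lipschitz functionals, and then carries out precisely the elementary substitution you describe.
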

{
\begin{proof}
See Appendix~\ref{sec:proof_location}.
\end{proof}
}

\begin{rem}
It is worth noting that the first term in~\eqref{eq:bound:location:accuracy} is always greater than the second one, and that the influence of $\alpha$ (or equivalently $\beta$, see~\eqref{eq:thm:bnd}) only appears in $\max_{m \le n} q_{\alpha}(m)$, {which is  bounded by $C + \sqrt{2\log(1/\alpha)}$, see Lemma~\ref{lem:bnd:quantile}.} {Hence,}
for a fixed regression function $\mu$, FDRSeg is able to estimate the jump locations correctly at a $\log (n)/n$ rate. Note, that this is the optimal sampling rate $1/n$ (up to a log-factor). It improves several results obtained for other methods, e.g. in~\citep{HarLev10} for a total variation penalized estimator a $\log^2 (n)/n$ rate has been shown. Theorem~\ref{thm:accuracy:location} also applies for a sequence of signals $\mu_n$ with $K = K_n$, $\Delta = \Delta_n$, and $\lambda = \lambda_n$. {For example, it shows that FDRSeg detects jump locations at a $\log (n)/n $ rate for $\mu_n$ with possibly unbounded $K_n$, $\lambda_n \sim \log (n) / n$, and bounded $1/\Delta_n $.} The same rate is shown for WBS in~\citep{Fry14}, however, under the additional assumption of bounded $K_n$ or {an oracle} choice of the threshold depending on the underlying signal. 
%It is important to note that we do not assume the proper choice of thresholds as in WBS with thresholding stopping, or the boundedness of $K_n$ as in WBS with strengthened Schwarz information criterion, 
%\todo[inline]{The author of~\cite{Fry14} might complain about the above remark since his results are about the consistency of both the number of changes and the jump locations. In fact, our result is also true in that case, see the proof of Theorem~\ref{thm:convergence:rate:in:probability}, although here we formulate the theorem only in terms of jump locations. }
\end{rem}

Next we will study the convergence rate of FDRSeg  in terms of $L^p$-risk. By {$\tilde{\Delta}_\mu$} we denote the largest jump size of a step function $\mu$, that is,
\[
\tilde{\Delta} = \tilde{\Delta}_{\mu} :=\max_{1 \le k \le K_{\mu}} \abs{c_k - c_{k-1}} \quad\text{if } K_{\mu} \ge 1. 
\] 
Let us {introduce} the following class of step functions, $B_{\nu, \epsilon, L}$, with bounded minimal segment length and jump size:
\begin{equation}\label{eq:def:difficulty:class}
B_{\nu, \epsilon, L} := \{\text{step signal }\mu: \lambda_{\mu} \ge \nu, \text{ and } \epsilon \le \Delta_{\mu} \le \tilde{\Delta}_{\mu} \le L \text{ if } K_{\mu} \ge 1\},
\end{equation}
for $0 < \nu < 1/2$, and $0 < \epsilon < L < \infty$.  Within such classes, we {obtain} a uniform control on the $L^p$-risk of FDRSeg for $1 \le p < \infty$.
\begin{thm}\label{thm:convergence:rate:LpRisk}
Assume $B_{\nu,\epsilon,L}$ is defined in~\eqref{eq:def:difficulty:class}, and  $\hat\mu_{n, \alpha_n}$ the FDRSeg estimator with $\alpha = \alpha_n$ from $n$ observations in model~\eqref{model_1}. 
\begin{itemize}
\item[(i).] 
If {$\alpha_n \gtrsim 1/n$} and 
\[
\alpha_n = o\left(\frac{1}{\sqrt{n}}\left(\frac{\log n}{n}\right)^{\min\{1/2, 1/p\}}\right),
\]
then
\[
\mathop{\lim\sup}_{n \to \infty}\sup_{\mu \in B_{\nu,\epsilon, L}}\mathbf{E}\left[{\norm{\hat\mu_{n, \alpha_n}-\mu}_{L^p} }\right] \left(\frac{\nu\epsilon^2 n}{\sigma^2\log n}\right)^{\min\{1/2, 1/p\}} \le 25 L,
\]
for any $\sigma > 0$, $0 < \nu < 1/2$, $0 < \epsilon < L < \infty$, and $1 \le p < \infty$. 
\item[(ii).] 
If $a n^{-\gamma} \le \nu:=\nu_n \le b n^{-\gamma}$ with constants $a,b>0$, $0 < \gamma < 1$, and {$\alpha_n \gtrsim n^{-3/2}$} and
\[
\alpha_n = o\left(\frac{1}{n^{\gamma+1/2}}\left(\frac{\log n}{n^{1-\gamma}}\right)^{\min\{1/2, 1/p\}}\right),
\]
then
\[
\mathop{\lim\sup}_{n \to \infty}\sup_{\mu \in B_{\nu_n,\epsilon, L}}\mathbf{E}\left[{\norm{\hat\mu_{n, \alpha_n}-\mu}_{L^p} }\right] \left(\frac{\epsilon^2 n^{1-\gamma}}{\sigma^2\log n}\right)^{\min\{1/2, 1/p\}} \le 34 L,
\]
for any $a, b, \sigma > 0$, $0 < \gamma < 1$, $0 < \epsilon < L < \infty$, and $1 \le p < \infty$. 
\end{itemize}
\end{thm}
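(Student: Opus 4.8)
The plan is to follow the template used to bound $L^p$-risks of multiscale estimators (as in \citep{FriMunSie14}), but with the family-wise overestimation bound replaced by an argument exploiting the fast decay of $\alpha_n$. Since both the multiscale statistic~\eqref{eq:test:stat:segment} and the constrained MLE~\eqref{def:fdr_smuce} are invariant under a common additive shift of $Y$ and $\mu$, I may assume $c_0=0$, so $\norm{\mu}_\infty\le KL$ on $B_{\nu,\epsilon,L}$. Put $\eta_n:=C'\sigma^2\log n/(\epsilon^2 n)$ for a large absolute constant $C'$; since $\nu$ (resp.\ $\nu_n\asymp n^{-\gamma}$ with $\gamma<1$) dominates $\log n/n$, one has $\eta_n<\lambda_\mu/2$ for every $\mu$ in the class and all large $n$. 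Define the good event $\Omega_n:=\Omega_n^{\mathrm{loc}}\cap\Omega_n^{\mathrm{cov}}\cap\Omega_n^{\mathrm{noise}}$, where $\Omega_n^{\mathrm{loc}}=\{d(\mu,\hat\mu)\le\eta_n\}$ with $d$ as in~\eqref{eq:def:accLoc}, $\Omega_n^{\mathrm{cov}}=\{\mu\in\mathcal{C}_K\}$ (the truth satisfies the side-constraint~\eqref{side-constraint:fdrsmuce}), and $\Omega_n^{\mathrm{noise}}$ is the standard multiscale deviation event $\{\,\sup_I\abs{\bar\varepsilon_I}\sqrt{\#I}\le c\sqrt{\log n}\,\}$ over all discrete intervals $I$. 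On $\Omega_n^{\mathrm{cov}}$ the set $\mathcal{C}_K$ is nonempty, so $\hat K\le K$ by~\eqref{def:Khat}; on $\Omega_n^{\mathrm{loc}}$ the $\eta_n$-balls around $\tau_1,\dots,\tau_K$ are disjoint, each must contain a distinct $\hat\tau_j$, so $\hat K\ge K$. Hence $\hat K=K$ on $\Omega_n$, the matching $\tau_i\mapsto\hat\tau_i$ is an order-preserving bijection with $\abs{\tau_i-\hat\tau_i}\le\eta_n$, and there are \emph{no false discoveries}. On $\Omega_n^{\mathrm{noise}}$ the only sub-interval in~\eqref{eq:test:stat:segment} that can force the constraint to bind with a large deviation is the whole segment, and $q_\alpha(\cdot)\le C+\sqrt{2\log(1/\alpha_n)}\lesssim\sqrt{\log n}$ by Lemma~\ref{lem:bnd:quantile} and the lower bound on $\alpha_n$; so $\abs{\hat c_k-c_k}\le C\sigma\sqrt{\log n/(n\lambda_k)}$ up to a lower-order bias $O(\eta_n\tilde\Delta/\lambda_k)=O(\log n/n)$ coming from the two length-$\eta_n$ strips on which $[\hat\tau_k,\hat\tau_{k+1})$ and $I_k$ disagree.

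\textbf{Risk on $\Omega_n$.} Split $[0,1)=B\cup G$, with $B$ the union of the $K$ strips of width $\le 2\eta_n$ around the true change-points and $G$ its complement. On $B$ one has $\abs{\hat\mu-\mu}\le\tilde\Delta+\max_k\abs{\hat c_k-c_k}\le 2L$, contributing $\le 2K\eta_n(2L)^p$ to $\norm{\hat\mu-\mu}_{L^p}^p$. On $G$ the signal is locally constant and $\abs{\hat\mu-\mu}\le C\sigma\sqrt{\log n/(n\lambda_k)}$ on the part of $G$ inside $I_k$, contributing $\le C^p(\sigma^2\log n/n)^{p/2}\sum_{k=0}^K\abs{I_k}^{1-p/2}$; using that there are at most $1/\nu$ (resp.\ $1/\nu_n$) segments and $\abs{I_k}\ge\nu$, one gets $\sum_k\abs{I_k}^{1-p/2}\le\nu^{-p/2}$ (by concavity for $p\le 2$, by $\abs{I_k}\ge\nu$ for $p\ge 2$). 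Taking $p$-th roots,
\[
\norm{\hat\mu-\mu}_{L^p}\le 2L\,(2K\eta_n)^{1/p}+C\Big(\tfrac{\sigma^2\log n}{\nu n}\Big)^{1/2}\quad\text{on }\Omega_n.
\]
Since $K\eta_n\asymp\sigma^2\log n/(\nu\epsilon^2 n)$, the first term is of order $L(\sigma^2\log n/(\nu\epsilon^2 n))^{1/p}$ and the second of order $(\sigma^2\log n/(\nu n))^{1/2}\le L(\sigma^2\log n/(\nu\epsilon^2 n))^{1/2}$ (as $L/\epsilon\ge 1$). For $p\ge 2$ the first term dominates, giving the rate $(\log n/n)^{1/p}$ in (i) and $(\log n/n^{1-\gamma})^{1/p}$ in (ii); for $p\le 2$ the second dominates, giving $(\log n/n)^{1/2}$. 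This is exactly where the exponent $\min\{1/2,1/p\}$ and the $\nu\epsilon^2$-normalization of the statement come from; matching the numerical constants ($2^{1/p}$, $C'$, $C$, \dots) against $25L$ (resp.\ $34L$) is the only genuinely tedious bookkeeping and is where the hypotheses pin down $C'$.

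\textbf{The complement.} Write $\Omega_n^c\subseteq\big((\Omega_n^{\mathrm{loc}})^c\cup(\Omega_n^{\mathrm{cov}})^c\big)\cup(\Omega_n^{\mathrm{noise}})^c$. On $(\Omega_n^{\mathrm{noise}})^c$, whose probability is exponentially small uniformly over the class, use the crude bound $\norm{\hat\mu-\mu}_{L^p}\le\norm{\hat\mu-\mu}_\infty\le KL+\sigma\max_l\abs{\varepsilon_l}+C\sigma\sqrt{\log n}$ (the one-point sub-constraint in~\eqref{eq:test:stat:segment} forces $\abs{\hat c_k}\le\max_l\abs{Y_l}+C\sigma\sqrt{\log n}$) together with Cauchy--Schwarz; the exponential decay beats the polynomial-in-$n$ second moment. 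On the remaining piece $\Omega_n^{\mathrm{noise}}\cap\big((\Omega_n^{\mathrm{loc}})^c\cup(\Omega_n^{\mathrm{cov}})^c\big)$, the computation of the previous paragraph with $\hat K\le n-1$ and segment lengths ranging freely yields the \emph{uniform} bound $\norm{\hat\mu-\mu}_{L^p}\le C\sigma\sqrt{\log n}$, so its contribution to the risk is at most $C\sigma\sqrt{\log n}\big(\P((\Omega_n^{\mathrm{loc}})^c)+\P((\Omega_n^{\mathrm{cov}})^c)\big)$. Here $\P((\Omega_n^{\mathrm{loc}})^c)$ is bounded by Theorem~\ref{thm:accuracy:location}(i) with $\delta=\eta_n$, which for $C'$ large is $\lesssim Kn^{-c'}$ with $c'$ arbitrarily large, and $\P((\Omega_n^{\mathrm{cov}})^c)\le 1-(1-\alpha_n)^{K+1}\le(K+1)\alpha_n$, which by the upper-bound hypothesis on $\alpha_n$ is $o\big(n^{-1/2}(\log n/n)^{\min\{1/2,1/p\}}\big)$ in (i) and $o\big(n^{-1/2}(\log n/n^{1-\gamma})^{\min\{1/2,1/p\}}\big)$ in (ii); even after multiplication by $\sigma\sqrt{\log n}$ this stays of strictly smaller order than the bound on $\Omega_n$, so the complement is negligible.

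\textbf{Main obstacle.} The crux is establishing $\hat K=K$ on a high-probability event. Unlike SMUCE, FDRSeg carries no family-wise overestimation bound, and Theorem~\ref{thm:mainthm} controls only $\E{\mathrm{FD}/(\hat K+1)}$, which neither bounds $\E{\mathrm{FD}}$ nor gives a fast tail for $\hat K$, so false discoveries cannot be handled by a tail argument. The way around this is that FDRSeg, being the \emph{sparsest} function in the side-constraint by~\eqref{def:Khat}, never overfits as soon as the truth itself lies in $\mathcal{C}_K$, and $\P(\mu\in\mathcal{C}_K)=(1-\alpha_n)^{K+1}\to 1$ precisely because the hypotheses force $\alpha_n(K+1)\to 0$ --- which is the sole reason part (ii), where $K$ may grow like $n^\gamma$, requires the faster decay $\alpha_n=o(n^{-\gamma-1/2}(\cdots))$. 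Once this and Theorem~\ref{thm:accuracy:location} are in hand, what remains is purely quantitative: selecting $\eta_n$ (i.e.\ $C'$) and assembling the constants so that the two terms in the $\Omega_n$-bound sum to at most $25L$ (resp.\ $34L$) after multiplication by the stated normalizing factor.
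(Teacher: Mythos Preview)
Your route is essentially correct and close in spirit to the paper's, but differs in two places worth flagging.

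\textbf{Overestimation control.} Your ``Main obstacle'' paragraph is mistaken: the FDR bound \emph{does} yield a tail for $\hat K$. The paper extracts Lemma~\ref{lem:overestimation:bound} directly from Theorem~\ref{thm:mainthm} via the elementary chain
\[
\P\{\hat K>K\}\le (K+2)\,\E{\frac{(\hat K-K)_+}{\hat K+1}}\le (K+2)\,\E{\frac{\mathrm{FD}}{\hat K+1}}\le (K+2)\frac{2\alpha_n}{1-\alpha_n},
\]
and then uses this probability bound in place of your $\Omega_n^{\mathrm{cov}}$. Your coverage argument gives essentially the same bound $(K+1)\alpha_n$, but note a technical slip: $\mu\in\mathcal C_K$ requires $T_{I_k}(\varepsilon,0)\le q_\alpha(\#I_k)$, whereas $q_\alpha$ in~\eqref{def:qalpha} is the quantile of $T_I(\varepsilon,\bar\varepsilon_I)$, so $\P(\mu\in\mathcal C_K)\ge(1-\alpha)^{K+1}$ is not immediate. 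The fix is trivial---replace $\mu$ by $\tilde\mu=\sum_k\bar Y_{I_k}\mathbf 1_{I_k}$, for which $T_{I_k}(Y,\bar Y_{I_k})$ has exactly the law of $T_{I_k}(\varepsilon,\bar\varepsilon_{I_k})$ and $\tilde\mu\in\mathcal C_K$ still forces $\hat K\le K$---but as written there is a small gap.

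\textbf{Decomposition.} The paper does not pass through a deterministic bound on a good event. Instead it writes $\E{\norm{\hat\mu-\mu}_{L^p}}=\int_0^{\sqrt n}+\int_{\sqrt n}^\infty$, handles the tail crudely, and on $[0,\sqrt n]$ splits the integrand by $\{\hat K>K\}$, $\{d(\mu,\hat\mu)>\delta_n\}$, and the remainder; on the last event it derives an exponential tail for each $\abs{\hat\mu_{I,n}-\mu_I}$ from the multiscale constraint (rather than your $\Omega_n^{\mathrm{noise}}$-based pointwise bound) and integrates in $s$. Your good-event approach is a legitimate alternative and arguably more transparent, at the cost of introducing the auxiliary event $\Omega_n^{\mathrm{noise}}$; the paper's tail-integration route avoids that event but requires the pointwise exponential bound on segment values. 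Both lead to the same rate, and the constants $25L$, $34L$ arise in the paper from explicit choices $\delta_n=129\,\sigma^2\log n/(\epsilon^2 n)$ (resp.\ $175$) and $s_*=25L(\sigma^2\log n/(\nu\epsilon^2 n))^{1/p_*}$ (resp.\ $34L$) in that integration.
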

\begin{proof}
See Appendix~\ref{sec:proof_convergence_rate}. 
\end{proof}
In fact, the rates above are minimax optimal, possibly up to a log-term.
\begin{thm}\label{thm:convergence:rate:lower:bound}
Assume the change-point regression model~\eqref{model_1}, and $B_{\nu,\epsilon,L}$ is defined in~\eqref{eq:def:difficulty:class}.  
\begin{itemize}
\item[(i).] There is a positive constant $C$, such that 
\[
\inf_{\hat{\mu}_n}\sup_{\mu \in B_{\nu,\epsilon, L}}\mathbf{E}\left[{\norm{\hat\mu_n-\mu}_{L^p} }\right] \ge C\left(\frac{\sigma^2}{n}\right)^{\min\{1/2,1/p\}},
\]
for any $\sigma >0$, $0 < \nu < 1/2$, $0 < \epsilon < 1 < L < \infty$, and $1 \le p < \infty$. 
\item[(ii).]
If $a n^{-\gamma} \le \nu:=\nu_n \le b n^{-\gamma}$ with constants $a,b>0$, $0 < \gamma < 1$, then there is a positive constant $C$, such that
\[
\inf_{\hat{\mu}_n}\sup_{\mu \in B_{\nu_n,\epsilon, L}}\mathbf{E}\left[{\norm{\hat\mu_n-\mu}_{L^p} }\right] \ge C\left(\frac{\sigma^2}{bn^{1-\gamma}}\right)^{\min\{1/2,1/p\}},
\]
for any $a, b, \sigma > 0$, $0 < \gamma < 1$, $0 < \epsilon < 1 < L < \infty$, and $1 \le p < \infty$. 
\end{itemize}
\end{thm}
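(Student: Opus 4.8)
The plan is to prove both bounds by the classical reduction of estimation to hypothesis testing: a two-point (Le Cam) argument for part~(i) and a hypercube (Assouad) argument for part~(ii). In both cases the two quantities to control are the $L^p$-separation of the competing signals and the Kullback--Leibler divergence between the induced product measures, which in this model is $\mathrm{KL}(\P_\mu\,\|\,\P_{\mu'}) = \tfrac1{2\sigma^2}\sum_{i=0}^{n-1}\bigl(\mu(i/n)-\mu'(i/n)\bigr)^2$, since the $Y_i$ are independent $\mathcal N(\mu(i/n),\sigma^2)$. The assumption $\epsilon<1<L$ guarantees that a jump of height exactly $1$ is always admissible, and I use this throughout so that no factor of $\epsilon$ enters.

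For part~(i) I would take the base signal $\mu_0$ with a single change-point at $1/2$ and values $0,1$ (admissible since $\nu<1/2$) and perturb it to $\mu_1$, having change-point at $1/2+h$ and values $\eta,1+\eta$. A direct computation gives $\norm{\mu_0-\mu_1}_{L^p}^p=\eta^p(1-h)+|1-\eta|^p h$, whence $\norm{\mu_0-\mu_1}_{L^p}\gtrsim\max\{\eta,h^{1/p}\}$, while $\mathrm{KL}(\P_{\mu_0}\,\|\,\P_{\mu_1})\lesssim n(\eta^2+h)/\sigma^2$. Choosing $\eta=c_1\sigma/\sqrt n$ and $h=c_2\sigma^2/n$ with $c_1,c_2$ small makes the divergence $O(1)$, hence (Pinsker) the total variation bounded away from $1$, and $\mu_0,\mu_1\in B_{\nu,\epsilon,L}$ for $n$ large. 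Le Cam's two-point inequality then yields $\inf_{\hat\mu}\max_{j\in\{0,1\}}\mathbf E_{\mu_j}\norm{\hat\mu-\mu_j}_{L^p}\gtrsim\norm{\mu_0-\mu_1}_{L^p}\gtrsim(\sigma^2/n)^{\min\{1/2,1/p\}}$, which is the assertion (the change-point shift drives the $p\ge2$ rate, the level shift the $p\le2$ rate).

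For part~(ii) the single-scale pair is too weak and I would replace it by a hypercube $\{\mu_\omega:\omega\in\{0,1\}^{m}\}$ with $m\asymp1/\nu_n\asymp n^{\gamma}$ independent ``teeth''. Partition $[0,1)$ into $\asymp m$ blocks of length $\asymp\nu_n$; in each block place a segment of length $\nu_n$ at level $1$ inside a level-$0$ surrounding (so every jump equals $1\in[\epsilon,L]$ and, with a little block padding, every segment keeps length $\ge\nu_n$); and let flipping the $j$-th coordinate shift the $j$-th tooth by $h$ and perturb its level by $\eta$. Since the teeth lie in disjoint blocks, a one-coordinate flip changes $\norm{\cdot}_{L^p}^p$ additively by $w\asymp\eta^p\nu_n+h$ and changes the divergence between neighbouring vertices by $\asymp(\eta^2n\nu_n+nh)/\sigma^2$; taking $\eta\asymp\sigma/\sqrt{n\nu_n}$ and $h\asymp\sigma^2/n$ with sufficiently small constants keeps the neighbour divergence $O(1)$. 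Assouad's lemma gives, for $\hat\omega:=\mathop{\arg\min}_\omega\norm{\hat\mu-\mu_\omega}_{L^p}$, that $2^{-m}\sum_\omega\mathbf E_{\mu_\omega}[\rho_H(\hat\omega,\omega)]\gtrsim m$; combining $\norm{\hat\mu-\mu_\omega}_{L^p}\ge\tfrac12\norm{\mu_{\hat\omega}-\mu_\omega}_{L^p}\asymp(\rho_H(\hat\omega,\omega)\,w)^{1/p}$ with the elementary bound $t^{1/p}\ge t\,m^{1/p-1}$ for $0\le t\le m$ then converts this into $\inf_{\hat\mu}\sup_{\mu\in B_{\nu_n,\epsilon,L}}\mathbf E\norm{\hat\mu-\mu}_{L^p}\gtrsim(mw)^{1/p}\asymp\max\{\eta,(mh)^{1/p}\}$. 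Inserting $m\asymp n^{\gamma}$ and $an^{-\gamma}\le\nu_n\le bn^{-\gamma}$: the first term dominates for $p\le2$ and is $\gtrsim(\sigma^2/(bn^{1-\gamma}))^{1/2}$, the second dominates for $p\ge2$ and is $\gtrsim(\sigma^2/(bn^{1-\gamma}))^{1/p}$, i.e. the rate $(\sigma^2/(bn^{1-\gamma}))^{\min\{1/2,1/p\}}$ in all cases. (Part~(i) is the degenerate case $m=O(1)$, which is why I keep it separate.)

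I expect the difficulties to be bookkeeping rather than conceptual. One must verify throughout that every perturbed signal genuinely lies in $B_{\nu,\epsilon,L}$ — no segment shrinks below $\nu$ (which dictates the block padding in part~(ii) and forces $h\le\nu_n$) and no jump leaves $[\epsilon,L]$ (here automatic, since all jumps stay within $o(1)$ of $1$ and $\epsilon<1<L$) — and one must track the dependence of the constants on $a,b$ and on universal quantities so that the final $C$ comes out clean. The one genuinely delicate point is that we must bound $\mathbf E\norm{\hat\mu-\mu}_{L^p}$ and not $\mathbf E\norm{\hat\mu-\mu}_{L^p}^p$; this is exactly what the inequality $t^{1/p}\ge t\,m^{1/p-1}$ handles, and equivalently one may phrase part~(ii) as a block-wise reduction to the single-change-point problem of part~(i) with $n$ replaced by $n\nu_n$. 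Finally, the trivial small-$n$ (or small-$\sigma$) regime, where the perturbation scales drop below the sampling resolution, is absorbed into $C$.
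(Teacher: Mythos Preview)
Your proposal is correct, and the overall strategy---reduce to hypothesis testing, separate the ``level'' and ``location'' contributions to the $L^p$ loss, and balance them against the KL budget---is the same as the paper's. The execution, however, differs in two respects worth noting.

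First, you combine the level perturbation and the change-point shift into a \emph{single} pair (part~(i)) or a \emph{single} hypercube (part~(ii)), whereas the paper runs two completely separate constructions and takes the maximum: for part~(i) it compares the constant pair $\{0,\sigma/\sqrt n\}$ (no change-points at all) to get the $n^{-1/2}$ rate, and independently the pair $\{\mathbf 1_{[0,\nu)},\mathbf 1_{[0,\nu+\sigma^2/n)}\}$ to get the $n^{-1/p}$ rate; for part~(ii) it builds one hypercube of level perturbations and a second, disjoint hypercube of location perturbations. Your packaging is more compact; the paper's is slightly easier to audit for membership in $B_{\nu,\epsilon,L}$, since each perturbed signal differs from a fixed admissible template in only one respect.

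Second, for part~(ii) the paper does not use Assouad. It applies the Varshamov--Gilbert bound to extract a well-separated subset of the hypercube and then Fano's inequality, which yields the lower bound directly in the $L^p$ metric without needing your concavity step $t^{1/p}\ge t\,m^{1/p-1}$. Your Assouad route is equally valid---the loss is genuinely additive across disjoint blocks, and the concavity trick is exactly the right way to pass from expected Hamming distance to expected $L^p$ norm---but it is a genuinely different reduction. The trade-off is the usual one: Assouad exploits the product structure and needs only neighbour-KL $O(1)$, while VG--Fano needs average KL $\lesssim\log(\#\text{hypotheses})$ but delivers the metric bound in one stroke.
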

\begin{proof}
See Appendix~\ref{sec:proof_lower_bound}
\end{proof}

\section{Implementation}\label{sec:impl}
It will be shown that FDRSeg  can be efficiently computed by a specific dynamic programming (DP) algorithm, which is significantly faster than the standard DP. For convenience let us introduce
\[
\mathcal{I}\left([\frac{i}{n},\frac{j}{n})\right) = 
\begin{cases}
1 & \text{ if } T_{[i/n, j/n)}(Y,c)\le q_{\alpha}(j-i)\text{ for some constant } c,\\
0 & \text{ otherwise.}
\end{cases} 
\] 
We first consider the computation of  $\hat{K}$, see~\eqref{def:Khat}. Let $\hat K[i]$ be the estimated number of change-points by FDRSeg  when applying to $(Y_0, \ldots, Y_{i-1})$, i.e., 
\begin{equation*}
\begin{split}
\hat{K}[i] := &\min\left\{k: \max_{0\le j \le k} T_{I_j}(Y,c_j) - q_{\alpha}(\#{I_j}) \le0, \vphantom{\text{ for some } \mu = \sum_{j=0}^{k} c_j\mathbf{1}_{I_j} \text{ with } \biguplus_{j=0}^{k} I_j = [0,\frac{i}{n})}\right. \\
&\left.\hphantom{\min\left\{k: \vphantom{\max_{0\le j \le k} T_{I_j}(Y,c_j) - q_{\alpha}(\#{I_j}) \le0, }\right.} \text{ for some } \mu = \sum_{j=0}^{k} c_j\mathbf{1}_{I_j} \text{ with } \biguplus_{j=0}^{k} I_j = [0,\frac{i}{n})\right\} 
\end{split}
\end{equation*}
for $i=1,\ldots, n$, where $\biguplus$ denotes disjoint union.  Then the estimated number of change-points $\hat{K}$ in~\eqref{def:Khat} is given by $\hat{K}[n]$. It can be shown that the following recursive relation
\begin{equation}\label{recursive:a}
\begin{split}
\hat{K}[0] &:= -1 \\
\hat{K}[i] &= \min\left\{\hat{K}[j]+1: \mathcal{I}\left([\frac{j}{n}, \frac{i}{n})\right) = 1,\, j = 0,\ldots,i-1 \right\}
\end{split}
\end{equation}
holds for $i = 1,\ldots, n$.  Eq.~\eqref{recursive:a} is often referred to as \emph{Bellman equation}~\citep{Bel57}, also known as \emph{optimal substructure} property in computer science community~(\citealp{CorLeiRivSte09}).  It justifies the use of \emph{dynamic programming}~\citep{Bel57, BelDre62} for computing FDRSeg. In this way, the computation of $\hat{K}$ is decomposed into smaller subproblems of determining $\hat{K}[i]$'s. For each subproblem, it boils down to checking the existence of constant functions which satisfy the multiscale side-constraint on $[j/n,i/n)$ i.e. $\mathcal{I}\left([j/n,i/n)\right) {=} 1$. The $\hat{K}[i]$ is computed, via the recursive relation~\eqref{recursive:a}, as $i$ increases from $1$ to $n$. For each $i$, this involves the search space of $\{0,\ldots, i-1\}$, which increases as $i$ approaches $n$.
However, some of such searches are, actually, not necessary and can be pruned. This can be seen by rewriting the recursive relation in terms of the number of change-points.  
Let $\mathcal{A}_{0} := \{0\}$ and $\mathcal{B}_{0} := \{1,2,\ldots,n\}$. For $k = 1, 2, \ldots,$ let 
\begin{align*}
r_k& := \max\left\{j:  T^0_{[i/n,j/n]}(Y, c)  \le \max_m q_{\alpha}(m)\text{ for some } i \in \mathcal{A}_{k-1},  c \in \R\right\}, \\
\mathcal{A}_k&:=\left\{i \in\mathcal{B}_{k-1}\cap [1, r_k]: \mathcal{I}\left([j/n, i/n)\right) = 1  \text{ for some } j \in \mathcal{A}_{k-1}\right\}, \\
\mathcal{B}_k&:=\mathcal{B}_{k-1}\setminus\mathcal{A}_{k}.
\end{align*}
Then $\hat{K} = k^*-1$ with $\mathcal{A}_{k^*} \ni n$. The reason for introducing $r_k$ is that there is no need to consider larger intervals if the multiscale side-constraint on an interval does not allow a constant signal even with the maximal penalty and the maximal quantile. Now for each $i$ we only need to search in a subset $\mathcal{B}_{k}\cap[1,r_k]$ of $\{0, \ldots, i-1\}$, where $k:=k(i)$.  The complexity for computing $\hat{K}$ is bounded from above by
\begin{equation}\label{eq:computation_complexity}
\begin{split}
&\mathcal{O}\left( \sum_{k=0}^{\hat{K}} (\#{\mathcal{A}_{k}})\biggl(r_{k+1}-\min\mathcal{A}_{k}-\frac{\#{\mathcal{A}_{k}}}{2}\biggr)^2\right)  \\
& \qquad\qquad \le  \mathcal{O}\left(n\max_{0 \le k \le \hat K}{\biggl(r_{k+1}-\min\mathcal{A}_{k}-\frac{\#{\mathcal{A}_{k}}}{2}\biggr)^2}\right).
\end{split}
\end{equation}
The value $\max_{0 \le k \le \hat K}{(r_{k+1}-\min\mathcal{A}_{k}-(\#{\mathcal{A}_{k}})/2)^2}$ depends on the signal and the noise. If the signal has many change-points and segments have similar lengths, it is probably a constant independent of $n$.  The higher the noise level, the larger it might be.  In such situation, the computation complexity is linear, although in the worst case it can be cubic in $n$.  

Indeed, the searches of $\hat{K}$ and the maximum likelihood estimate can be done simultaneously, if we record the likelihood for each point $i$. The complexity is again bounded above by~\eqref{eq:computation_complexity} but with a possibly larger constant. The memory complexity of the whole algorithm is linear, i.e. $\mathcal{O}(n)$. 
%
%{It can be seen that $\mathcal{A}_k$ contains all possible locations of the $k$-th change-point of signals in $\mathcal{C}_{\hat K}$. 
%Assume $\mathcal{A}'_k$'s are computed on the reverse data. The intersection $\mathcal{L}_k:=\mathcal{A}_k\cap\mathcal{A}'_{\hat K-k+1}$ will give a not larger (typically smaller) set which still contains the $k$-th jump location of signals in $\mathcal{C}_{\hat K}$. Moreover, one can get an approximation of  $\mathcal{C}_{\hat K}$ by
%\[
%\mathcal{C}_{\hat K} \subset \prod_{t\in[0,1)}\left[\mu_*(t), \mu^*(t)\right],
%\]
%where
%\[
%\mu_*(t):=\inf\left\{c: T_{[i/n, j/n)}(Y,c)\le q_{\alpha}(j-i), [\frac{i}{n},\frac{j}{n}) \ni t, i\in \mathcal{L}_k, j\in \mathcal{L}_{k+1} \text{ for some }k\right\},
%\]
%and
%\[
%\mu^*(t):=\sup\left\{c: T_{[i/n, j/n)}(Y,c)\le q_{\alpha}(j-i), [\frac{i}{n},\frac{j}{n}) \ni t, i\in \mathcal{L}_k, j\in \mathcal{L}_{k+1} \text{ for some }k\right\}.
%\]}
We omit technical details, and provide the implementation in the R package ``FDRSeg'' (\url{http://www.stochastik.math.uni-goettingen.de/fdrs}).

\section{Simulations and Applications}\label{sec:sim_app}
\subsection{Simulation study}
We now investigate the performance of  FDRSeg under situations with various SNRs {and} different number of change-points, and compare it with PELT~\citep{KillFeaEck12}, BS~\citep{ScoKno74}, CBS~\citep{OlsVenLucWig04, Ven07}, WBS~\citep{Fry14}, {and SMUCE~\citep{FriMunSie14}}. As mentioned in Section~\ref{sec:intro}, these methods represent a selection of powerful state of the art procedures from two different view points:  {first,} exact and fast {global} optimization {methods} based on dynamic programming, including PELT, and SMUCE;  {second,} {fast} greedy methods based on {local} single change-point detection, including BS, CBS and WBS.  In addition, we also include {two recent {fully automatic} penalization methods, {specifically} tailored to jump detection.} {The first is based on a} modified \emph{Schwarz information criterion (SIC)}~\citep{ZhaSie07}, referred to as mSIC, which assumes the number of change-points is bounded. {The second is} a recent variant~\citep{ZhSie12}, referred to as mSIC2, which is {primarily} designed for many change-points.
Concerning implementation, we use {the CRAN} R-packages ``PSCBS'' for CBS, ``wbs'' for BS and WBS,  ``changepoint'' for PELT, and an efficient implementation in our R-package ``FDRSeg'' for SMUCE, {see \url{http://www.stochastik.math.uni-goettingen.de/fdrs}. }
For both SMUCE and FDRSeg, we estimate the $\alpha$-quantile thresholds by 5,000 Monte-Carlo simulations. The penalty $2\log(K)$ is chosen for PELT, which is dubbed by ``SIC1'' in the codes provided by its authors, and works much better than the default choice. If we identify a change-point with two parameters (location and jump-size), this is the same as the SIC. 
We use the automatic rule, \emph{strengthened SIC}, recommended by the author for WBS. The default parameter setting provided in the packages was used for BS and CBS. 
{For mSIC and mSIC2, maximum likelihood estimates are first computed by dynamic programming~(see \citep{FriKemLieWin08}), for each fixed number of change-points up to some prechosen constant $K_{\max}$, and then the optimal solutions are found within such maximum likelihood estimates, according to criteria in \citep{ZhaSie07} and \citep{ZhSie12}, respectively. Thus, their computation complexity  depends increasingly on $K_{\max}$. }
% default parameter of BS $1.3\sqrt{2\log(n)}$. 
In all simulated scenarios, we assume that the noise level $\sigma$ is known beforehand.  For quantitative evaluation, we will use the \emph{mean integrated squared error} (MISE), 
%\emph{mean integrated absolute error} (MIAE), 
the error of estimated locations $d_*(\hat\mu) := \E {d(\mu,\hat\mu)}$, see~\eqref{eq:def:accLoc}, 
the FDR defined in~\eqref{def:fdr} and the \emph{V-measure}~\citep{Rosenberg2007}, {a segmentation evaluation measure, which} takes values in $[0,1]$, with a larger value indicating higher accuracy.  It is based upon two criteria for clustering usefulness, homogeneity and completeness, which capture a clustering solution's success in including all and only data points from a given class in a given cluster. In particular, a V-measure of $1$ shows a perfect segmentation.  All the experiments are repeated 1,000 times.

\subsubsection{Varying noise level}
Let us consider the impact of different noise levels. To this end, we use the mix signal~(adopted from \citep{Fry14}), see Figure~\ref{fig:mix_various_noise:hist:jmploc}, with additive Gaussian noise, which is a mix of prominent change-points between short intervals and less prominent change-points between longer intervals. The noise level  $\sigma$  varies from $1$ to $8$, and the number of observations $n = 560$. 
%{Since the number of change-points is small, the mSIC is chosen. }
For SMUCE and FDRSeg, we choose the same parameter $\alpha_S = \alpha = 0.15$. As in Figure~\ref{fig:mix_various_noise}, FDRSeg outperforms others in all noise levels, in terms of V-measure, MISE, $d_*(\cdot)$, and detection power measured by the average number of detected change-points. As indicated by the number of detected change-points and MISE, PELT ranks second followed by WBS,  then mSIC, CBS,  SMUCE and lastly BS. The same order of performance is also seen from V-measure and $d_*(\cdot)$ up to $\sigma = 5$, but SMUCE deteriorates slower as noise level $\sigma$ increases and achieves a better V-measure and $d_*(\cdot)$ than CBS when $\sigma \ge 6$ and than WBS at $\sigma=8$. {The mSIC2 performs comparably to mSIC when $\sigma \le 3$, while deteriorating faster as $\sigma$ increases,  similar to BS when $\sigma \ge 7$.} It is worth noting that the empirical FDR of FDRSeg is around $0.1$, {below $\alpha = 0.15$ and} the theoretical bound $\approx 0.35$ {in Theorem~\ref{thm:mainthm}} (indicated by the dashed horizontal line in the lower-left panel). The CBS has the second largest empirical FDR, while that of PELT, SMUCE, mSIC, {mSIC2}, BS and WBS is almost zero. Once the quantiles for SMUCE and FDRSeg are simulated, they can be stored and used for later computations, which are therefore excluded from the recorded computation time. The computation time of FDRSeg is similar to the fastest ones, namely PELT, BS and SMUCE, at $\sigma = 1$ and increases with the noise level $\sigma$. The FDRSeg is faster than WBS and CBS in all scenarios. {As mentioned earlier,} the computation time of mSIC {and mSIC2} depends on the upper bound {$K_{\max}$} of the possible number of change-points, which is set to 100. To have a closer examination, we also illustrate histograms of the locations of change-points, for $\sigma=8$ in Figure~\ref{fig:mix_various_noise:hist:jmploc}. In this situation, the FDRSeg has {always} the largest detection power over all change-point locations.  

The constant signal with no change-point {serves as} an  example to examine whether FDRSeg detects artificial jumps. Figure~\ref{fig:const:fdrSmuce:smuce} shows the comparison between SMUCE ($\alpha_S = 0.15$) and FDRSeg ($\alpha = 0.15$) when $\mu \equiv 0$. Remarkably, the difference between the two estimators is negligible and the overestimation by FDRSeg number of jumps is quite insignificant.

\begin{figure}[!h]
\centering
\includegraphics[width= 1\columnwidth]{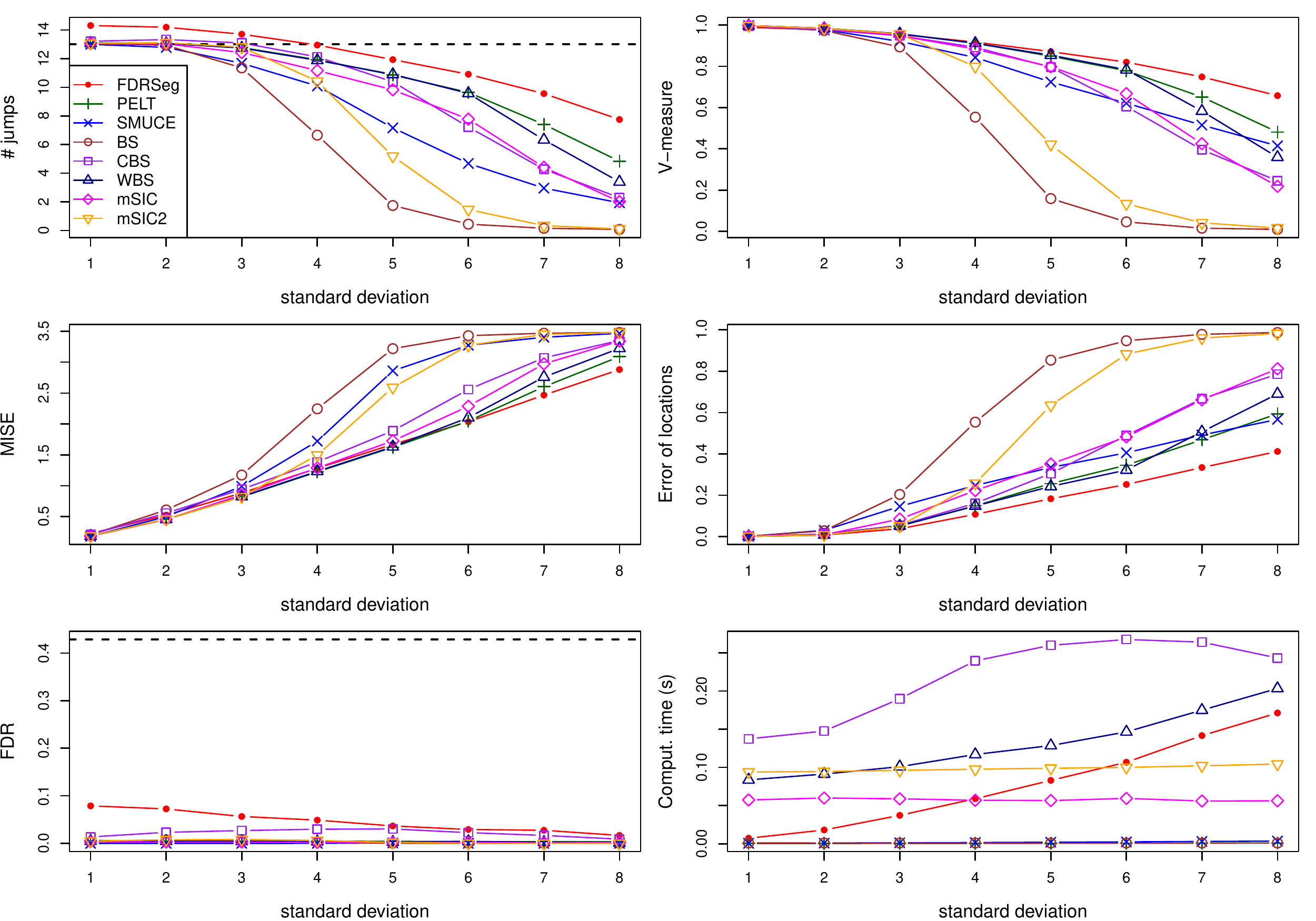} 
\caption{{The mix signal with various noise levels. True number of change-points is $K = 13$, indicated by the dashed line in the first panel. }}
 \label{fig:mix_various_noise}
\end{figure}

\begin{figure}[!h]
\centering
\includegraphics[width= 1\columnwidth]{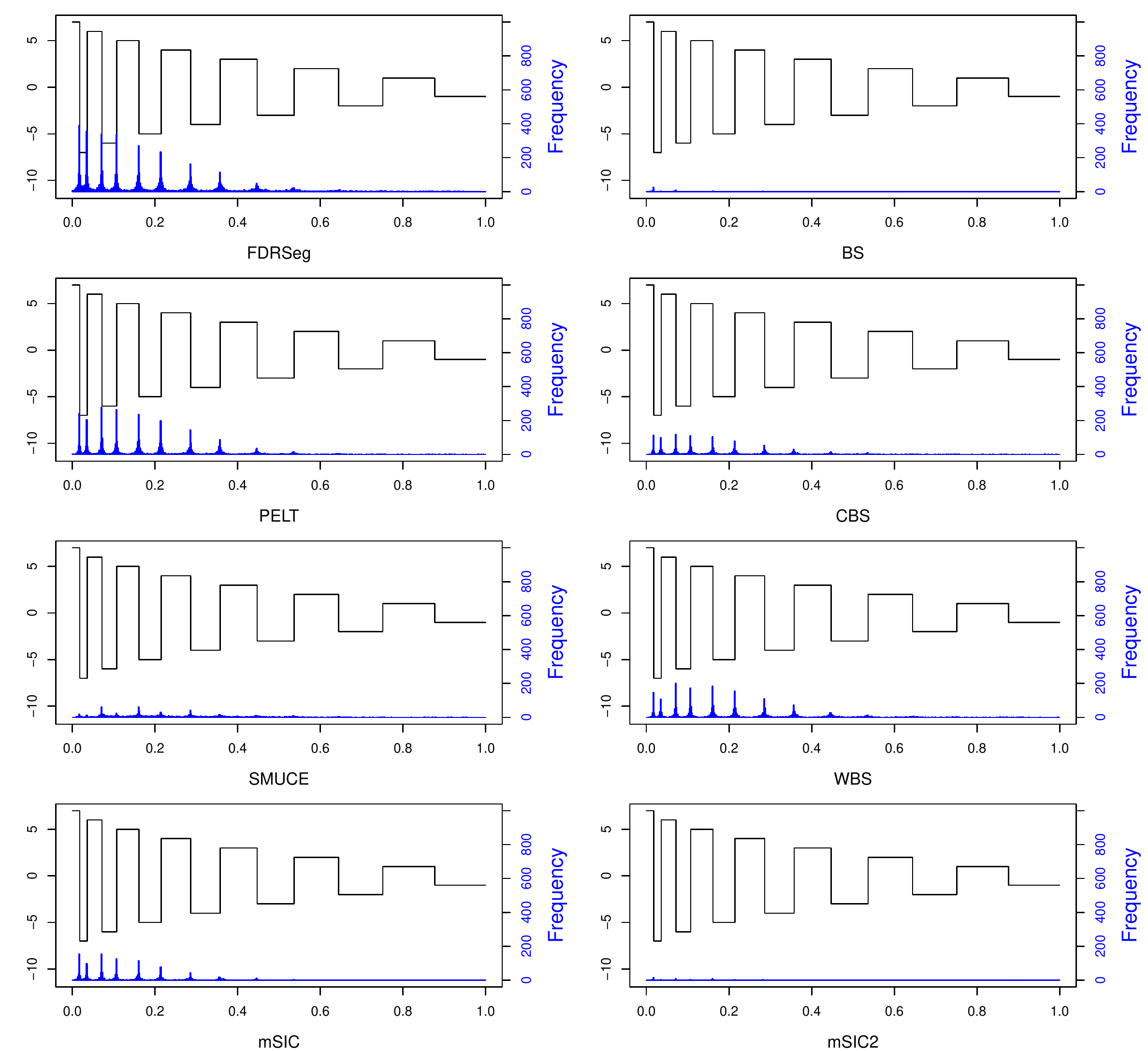} 
\caption{{The histogram of the estimated locations of  change-points for the mix signal with $\sigma=8$. As a benchmark, the true signal is plotted. }}
 \label{fig:mix_various_noise:hist:jmploc}
\end{figure}

\begin{figure}[!h]
\centering
\includegraphics[width= 0.65\columnwidth]{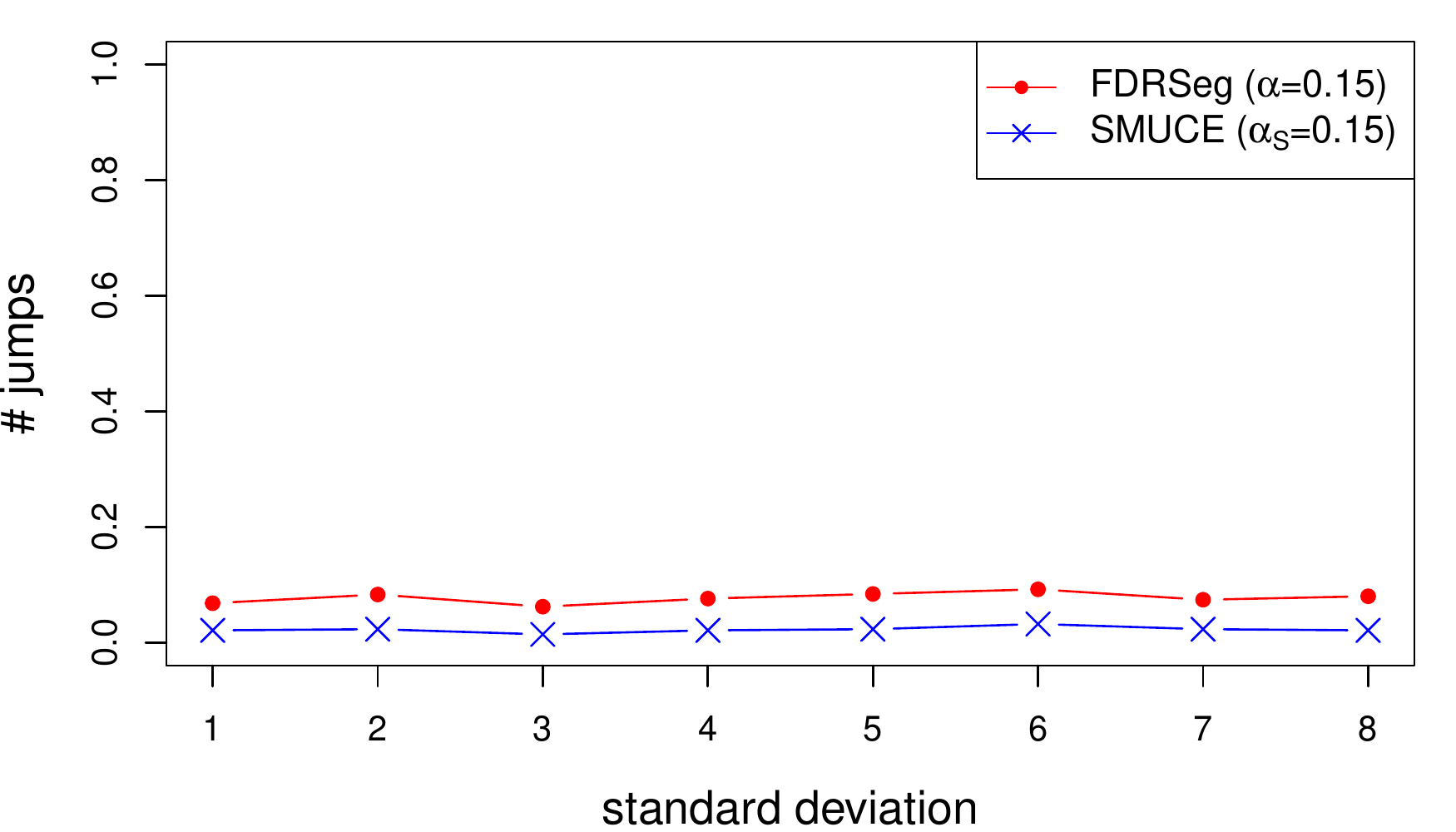} 
\caption{The constant signal with various noise levels.}
 \label{fig:const:fdrSmuce:smuce}
\end{figure}

%\begin{table}[h]
%\begin{center}
%\caption{The frequency of the number of change-points ($\sigma=8$). The column in bold corresponds to the right number of change-points.  }\label{tbl:block_various_noise:hist:njmp}
%  \begin{tabular}{@{} ccccccccc @{}}
%    \toprule
%    Method           & \multicolumn{7}{c}{Numbers of change-points}                                   & time (s) \\ \cline{2-8}
%                           & $\le$8 &    9     & 10      & \textbf{11}      & 12      & 13      & $\ge$14 &              \\ 
%    \midrule
%    SMUCE          & 0.641  & 0.330 & 0.029 & \textbf{0}        & 0        & 0        & 0           &  0.105         \\ 
%    FDRSeg & 0         & 0.002 & 0.194  & \textbf{0.395}  & 0.224 & 0.110 & 0.075   &  0.119        \\ 
%    BS                  & 0.441  & 0.253 & 0.267 & \textbf{0.038}  & 0.001 & 0        & 0          &   0.001           \\ 
%    WBS               & 0.006  & 0.093 & 0.474 & \textbf{0.384} & 0.037 & 0.006  & 0          &    0.124         \\ 
%    PELT              & 0.002  & 0.064 &  0.477 & \textbf{0.433} & 0.023 & 0.001  &  0        &     0.001         \\ 
%    \bottomrule
%  \end{tabular}
%\end{center}
%\end{table}

%\begin{figure}[h]
%\centering
%\includegraphics[width= 0.9\columnwidth]{pics/simulation/block_vn_alpha_10_sd_8_id_1_hist_njmps} 
%\caption{The histogram of the number of change-points ($\sigma=8$). The shaded rectangle corresponds to the right number of change-points.  }
% \label{pic:block_various_noise:hist:njmp}
%\end{figure}

\subsubsection{Varying frequency of change-points}

In order to evaluate the detection power as $K$ increases, we employed the teeth signal (see Figure~\ref{fig:relation_choice_alpha_est}) with $n=3$,000, and $K = n^{\theta}$, $\theta = 0.1, 0.2, \ldots, 0.9$, as its {integrated} SNR remains the same for different number of change-points. 
%{Since the number of change-points is increasing, the mSIC2 is chosen. }
The same parameter $\alpha_S = \alpha = 0.1$ is chosen for SMUCE and FDRSeg. The results are summarized in Figure~\ref{fig:teeth_various_cp}. The FDRSeg, mSIC, and PELT perform comparably well in all situations in terms of number of  detected change-points, V-measure, MISE and $d_*(\cdot)$, {while FDRSeg is slightly better in terms of accuracy of change-point locations  at $\theta = 0.9$.}  As shown by V-measure, CBS and WBS fail when $\theta \ge 0.7$, BS fails when $\theta \ge 0.8$, and SMUCE {and mSIC2} deteriorate at $\theta = 0.9$. A similar trend can also be seen for the number of estimated change-points, MISE, and $d_*(\cdot)$. It is interesting that the empirical FDR of FDRSeg gets closer to the theoretical bound $\approx 0.22$ as $\theta \to 1$, {indicating that this gets sharper for increasing $K$, although we have no proof for this.} The empirical FDR of CBS is large when the change-points are sparse, and decreases as $K$ increases, while  PELT, SMUCE, {mSIC}, mSIC2, BS and WBS have a relatively small FDR close to zero in all cases. The computation time of FDRSeg decreases as $K$ increases, and is comparable to the fastest ones (SMUCE, PELT and BS), when $\theta \ge 0.6$.  The computation time of {mSIC and} mSIC2 is the slowest, {since we have to search among maximum likelihood estimates with all possible numbers of change-points, i.e. $K_{\max} = n-1$.} 

\begin{figure}[h]
\centering
\includegraphics[width= 1\columnwidth]{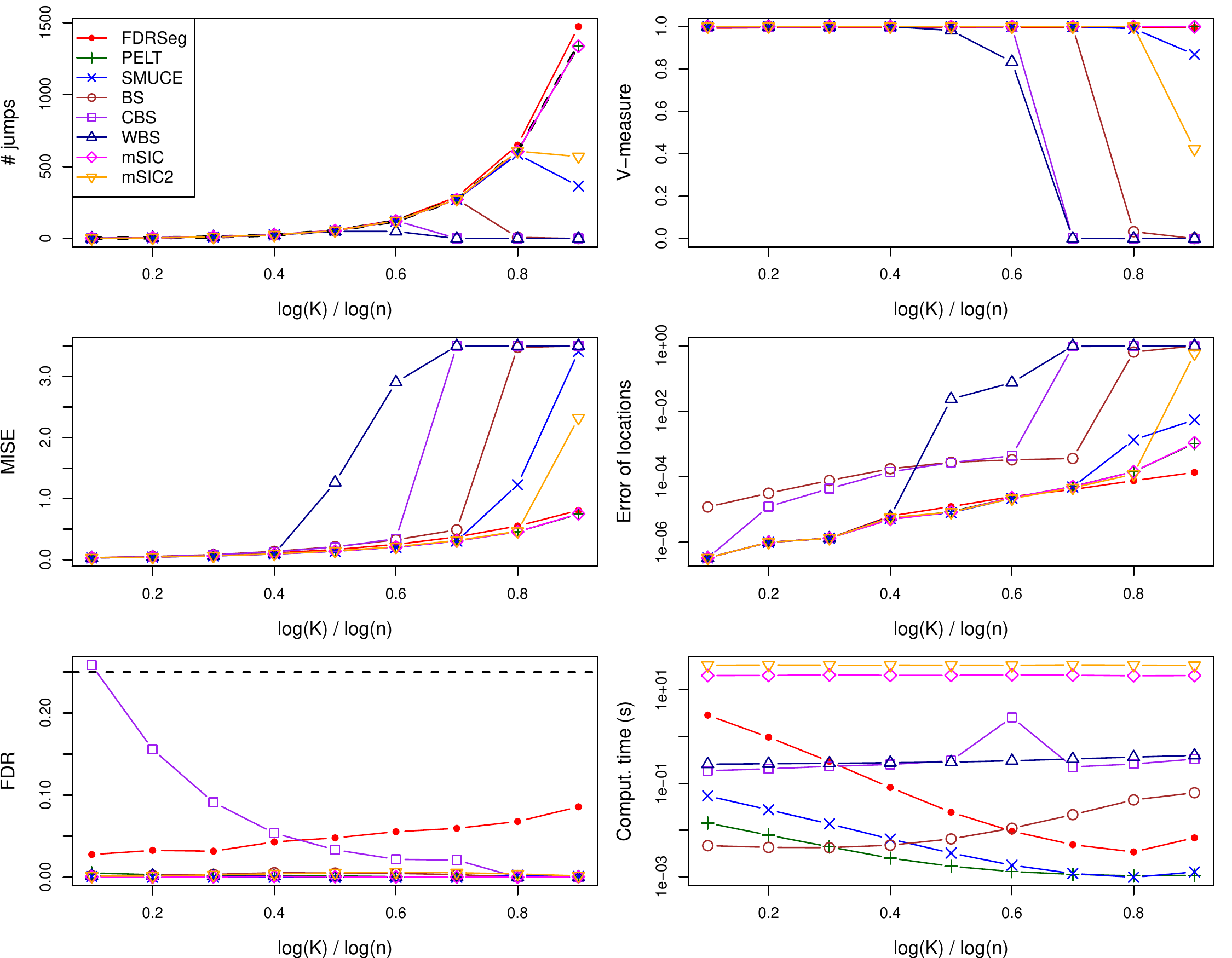} 
\caption{{The teeth signal with various frequencies of change-points. True number of change-points is plotted in dashed line in the first panel. }}
 \label{fig:teeth_various_cp}
\end{figure}

\subsection{Array CGH data}

Identifying the chromosomal aberration locations in genomic DNA samples is crucial in understanding the pathogenesis of many diseases, in particular, various cancers. Array comparative genomic hybridization (CGH) provides the means to quantitatively measure such changes in terms of DNA copy number \citep{pinkel1998high}.  The statistical task is to determine accurately the regions of changed copy number, {and the} model~\eqref{model_1} {and variants thereof} has been commonly studied in this {context} \citep{OlsVenLucWig04,ZhaSie07,TibWan08,Jen10}. We compared FDRSeg with SMUCE, and CBS, which is designed for the analysis of array CGH data, on the Coriel data set from~\citep{Sni01}.  {Following~\citep{OlsVenLucWig04} outliers have been removed} before segmentation. The noise level is estimated by an interquartile range (IQR) applied to local differences {(see \citep{DavKov01})}
\begin{equation}\label{eq:noise:level:estimation}
\hat\sigma = \frac{1.349}{\sqrt{2}}\Big(\hat{q}_{0.75}-\hat{q}_{0.25}\Big),
\end{equation}
where $\hat{q}_{\alpha}$ is the empirical $\alpha$-quantile of $\{Y_i-Y_{i-1}\}_{i=1}^{N-1}$. 
The CBS was computed using default parameters provided in the package ``PSCBS''. The estimated copy number variations by each method are plotted with the data (points) for cell line GM01524 in Figure~\ref{pic:array_cgh}.  The SMUCE ($\alpha_S = 0.05$) detects 8 change-points, while FDRSeg ($\beta = 0.05$) finds 5 more change-points, which are all found by CBS as well. The latter {provides} the largest number of change-points, 17, 4 of them are not supported by FDRSeg (marked by `x').  We stress that, there is biological evidence that {such} small jumps might be artifacts due to genomic waves, see~\citep{DLHYGHBMW08}. The model~\eqref{model_1} apparently does not take such waves into account.  Apart from {this possible} modeling error, it is worth noting that, by Theorem~\ref{thm:mainthm}, among 13 change-points by FDRSeg there are on average at most 0.7 false ones.  In order to study the robustness against {such a modeling error}, we consider step functions {in~\eqref{model_1}} with periodic trend component, as in~\citep{OlsVenLucWig04,ZhaSie07}, i.e.
\begin{equation}\label{eq:test:robust:acgh}
Y_i \sim \mathcal{N}\left( \mu({i}/{n}) + 0.25 b \sin(a \pi i), \sigma^2\right),\quad i = 0, 1, \ldots, n-1,
\end{equation}
where $\sigma = 0.2$, $n = 497$, and $\mu$ has change-points $\{137, 224, 241, 298, 307, 331\}/n$ with values $\{-0.18,0.08,1.07,-0.53,0.16,-0.69,-0.16\}$ on each segment, respectively. The FDRSeg with $\beta = 0.05$ is applied to {the} signal~\eqref{eq:test:robust:acgh} within a range of $a$ and $b$. The frequency of detecting the right number of change-points, {together with the average of $(\hat K - K)$}, in 1,000 simulations is given in Figure~\ref{pic:stable:analysis:acgh}. It shows that FDRSeg is robust within a large range of local trends, and {only} {includes false positives} when the trend {becomes} large and highly oscillating.

\begin{figure}[t]
\centering
\includegraphics[width= 1\columnwidth]{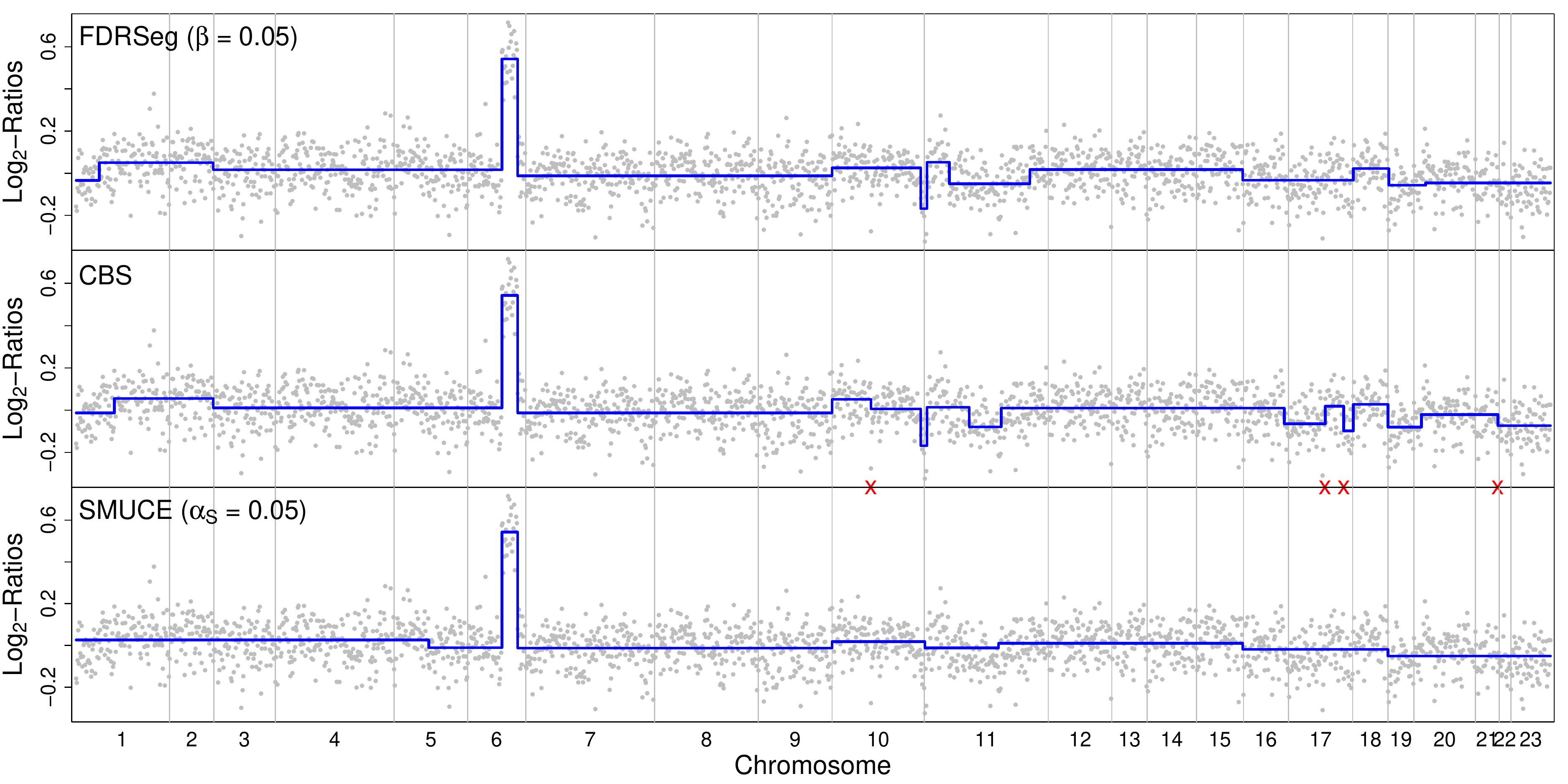} 
\caption{Array CGH profile in GM01524 cell line in the Coriel data set. 
%{The confidence set $\mathcal{C}_{\hat K}$ by FDRSeg is shown by the red region in the upper panel.}
%\todo[inline]{Discussion: in this case, the lower bound~\eqref{eq:cover:prob} does not provide meaningful coverage probability. }
}
 \label{pic:array_cgh}
\end{figure}

\begin{figure}[t]
\centering
\includegraphics[width= 1\columnwidth]{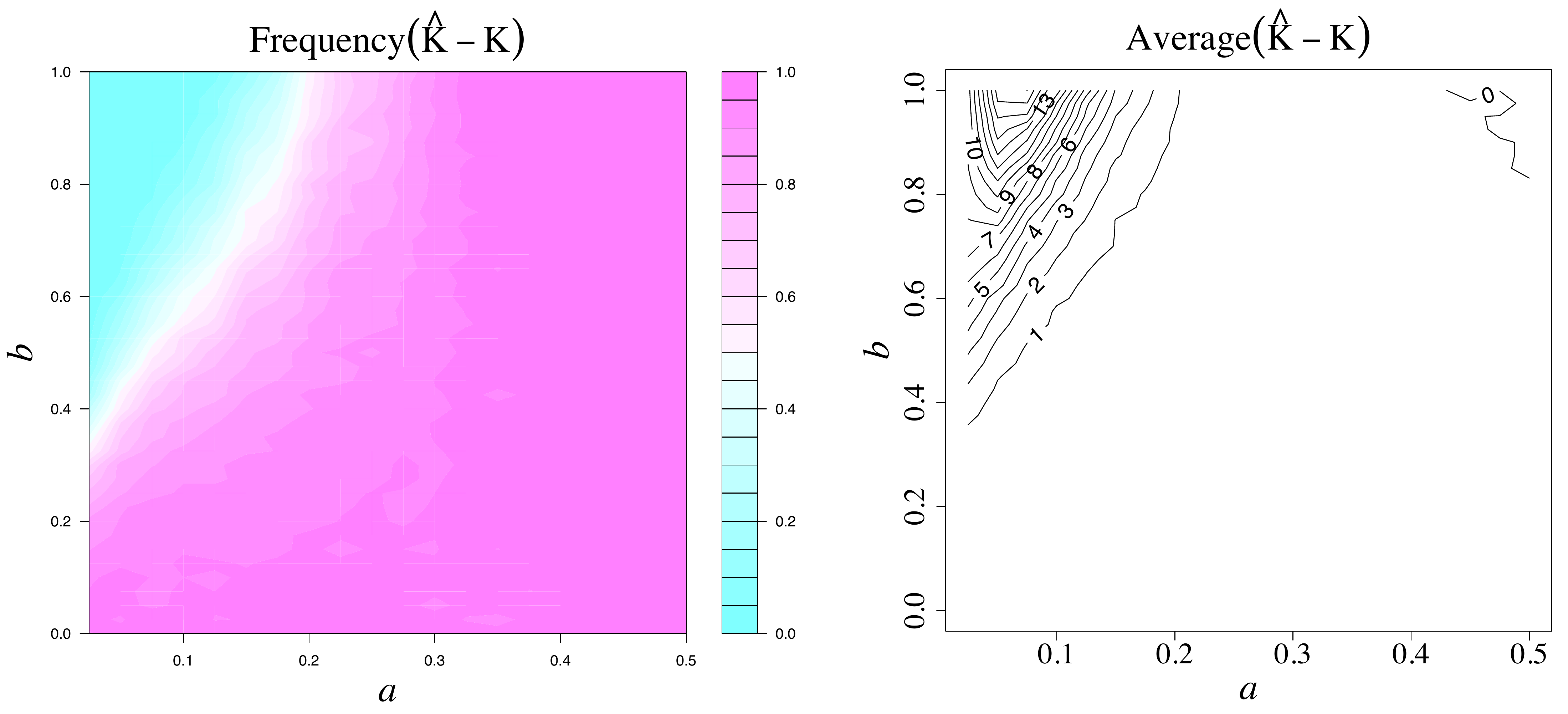} 
\caption{{Frequencies of estimating correctly the number of jumps (left), and averages of $({\hat K} - K)$ (right),  by FDRSeg ($\beta = 0.05$) for signal~\eqref{eq:test:robust:acgh} with various $a$ and $b$ as in~\eqref{eq:test:robust:acgh}. } }
 \label{pic:stable:analysis:acgh}
\end{figure}

\subsection{Ion channel idealization}\label{subsec:ion:channel}

Being prominent components of the nervous system, ion channels play major roles in cellular processes~\citep{hille2001ion}, which are helpful in diagnosing many human diseases such as epilepsy, cardiac arrhythmias, etc.~\citep{kass2005channelopathies}. The data analysis is to obtain information about channel characteristics and the effect of external stimuli by monitoring their behavior with respect to conductance and/or kinetics~\citep{chung2007biological}. The measuring process involves an analog low-pass filter prior to digitization. As suggested by~\citep{Hot12}, {hence} a realistic model for observations is
\begin{equation}\label{ion_channel:real_model}
Y_i = (\rho * \mu)({i}{\vartheta}) + \tilde{\varepsilon}_i,
\end{equation}
where $1/\vartheta$ is the sampling rate, and the convolution kernel $\rho$ of the low-pass filter has compact support in an interval of length $L$, such that $\int \rho(t) dt = 1$. Being the independent and identically distributed (i.i.d.) Gaussian noise after the low-pass filter $\rho$, the $\tilde{\varepsilon}_i$'s are still Gaussian with mean zero, but are correlated now. 

As mentioned earlier in  Section~\ref{sec:intro}, FDRSeg can be extended to more general models than~\eqref{model_1}. {We illustrate this for the present case of colored noise. To this end, we modify FDRSeg which explicitly takes into account the dependence structure of the noise in~\eqref{ion_channel:real_model}. This requires to} adjust the definition of quantiles $q_{\alpha}(\cdot)$ by using dependent Gaussian random variables, see~\eqref{def:qalpha}. Note that the dependence structure is completely known from the kernel $\rho$, so the modified quantiles can also be estimated via Monte-Carlo simulations. {In order to analyze the data properly,} we observe that $\rho * \mu$ is constant on $[s+\vartheta L, t]$ if $\mu$ is constant on $[s, t]$. Thus we consider only intervals contained in $[\hat{\tau}_i+\vartheta L, \hat{\tau}_{i+1})$ in the multiscale side-constraint~\eqref{side-constraint:fdrsmuce} instead of all subintervals of $[\hat{\tau}_i, \hat{\tau}_{i+1})$, for $i = 0,1,\ldots, \hat K$.  By incorporating these two modifications, we {obtain a modified version of FDRSeg adjusted to this dependency, \emph{D-FDRSeg}.} For comparison, we consider the \emph{jump segmentation by multiresolution filter} (J-SMURF) estimator~\citep{Hot12}. The implementation of J-SMURF is provided in R-package ``stepR'', available from CRAN. As in~\citep{Hot12}, the significance level $\alpha_J$ of J-SMURF is set to 0.05. The significance parameter $\beta$ of D-FDRSeg is also chosen as 0.05. The noise level, i.e. the standard deviation of $\tilde{\varepsilon}_i$, is estimated by~\eqref{eq:noise:level:estimation} from the undersampled data $\{Y_{iL}\}_i$.

In order to explore the potential of D-FDRSeg, we first carried a validation study on simulated data.  
{Mimicking various dynamics of ion channels, we choose the truth $\mu$ in~\eqref{ion_channel:real_model} by a simulated continuous time two-state Markov chain with different transition rates for 1 s. The true signal was tenfold oversampled at 100 kHz, and added by Gaussian white noise. Then, a digital low-pass filter with kernel $\rho$ in~\eqref{ion_channel:real_model} was applied, and the data with 10,000 points were finally obtained after a subsampling at 10 kHz. The noise level was chosen such that SNR equals to 3. All the parameters above are typical for a real experimental setup~(see~\citep{vandongen96,Hot12} for further details).} 
The average of  $(\hat K - K)$ for J-SMURF, FDRSeg, and D-FDRSeg  in 100 simulations is given in Figure~\ref{pic:ion_channel:sim:study}. {As to be expected,} FDRSeg detects {a large amount} of false positives due to violation of the independence assumption {of the noise~\eqref{ion_channel:real_model}}, while D-FDRSeg with the dependence adjustment {corrects for this. It} shows a higher accuracy, and a higher detection power than J-SMURF, over all transition rates. 
We further compare D-FDRSeg with J-SMURF on experimental data: a characteristic conductance trace of gramicidin A (provided by the Steinem lab, Institute of Organic and Biomolecular Chemistry, University of G{\"o}ttingen) with a typical SNR, $L = 30$ and $\vartheta = 0.1$ ms, see Figure~\ref{pic:ion_channel:dfdr:jsmurf}.  The J-SMURF detects only 8 change-points, while FDRSeg {suggests} 5 additional ones, i.e. 13 in total, all of which are visually reasonable.  This illustrates the ability of FDRSeg  to detect change-points simultaneously {over} various scales, as it is {required} for the investigated gramicidin channel~(see \citep{Hot12} for an explanation).

\begin{figure}[!h]
\centering
\includegraphics[width= 0.6\columnwidth]{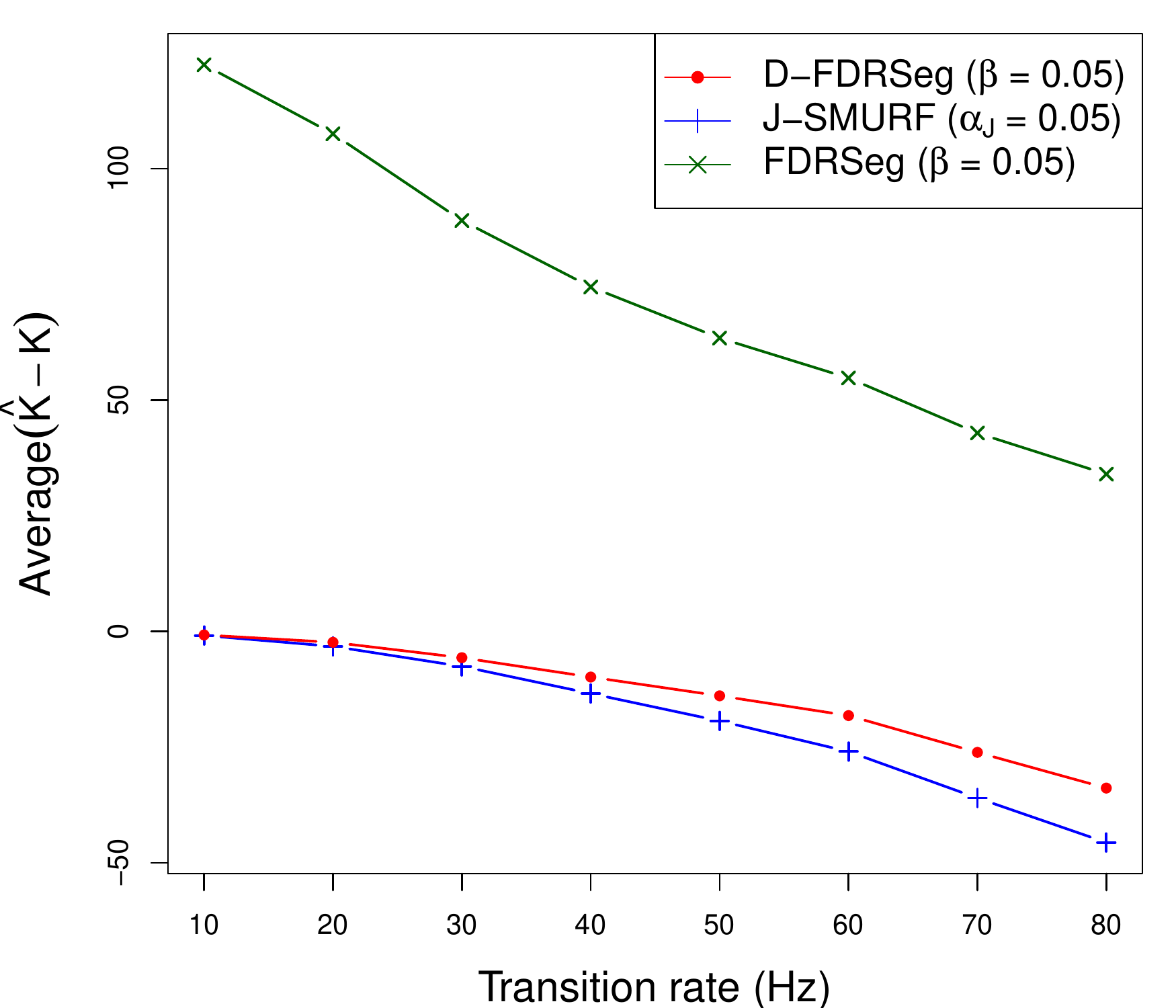}  
\caption{{Simulation study on a two-state Markov chain with different transition rates ($\text{SNR} = 3$).} }  
 \label{pic:ion_channel:sim:study}
\end{figure}

\begin{figure}[!h]
\centering
\includegraphics[width= 1\columnwidth]{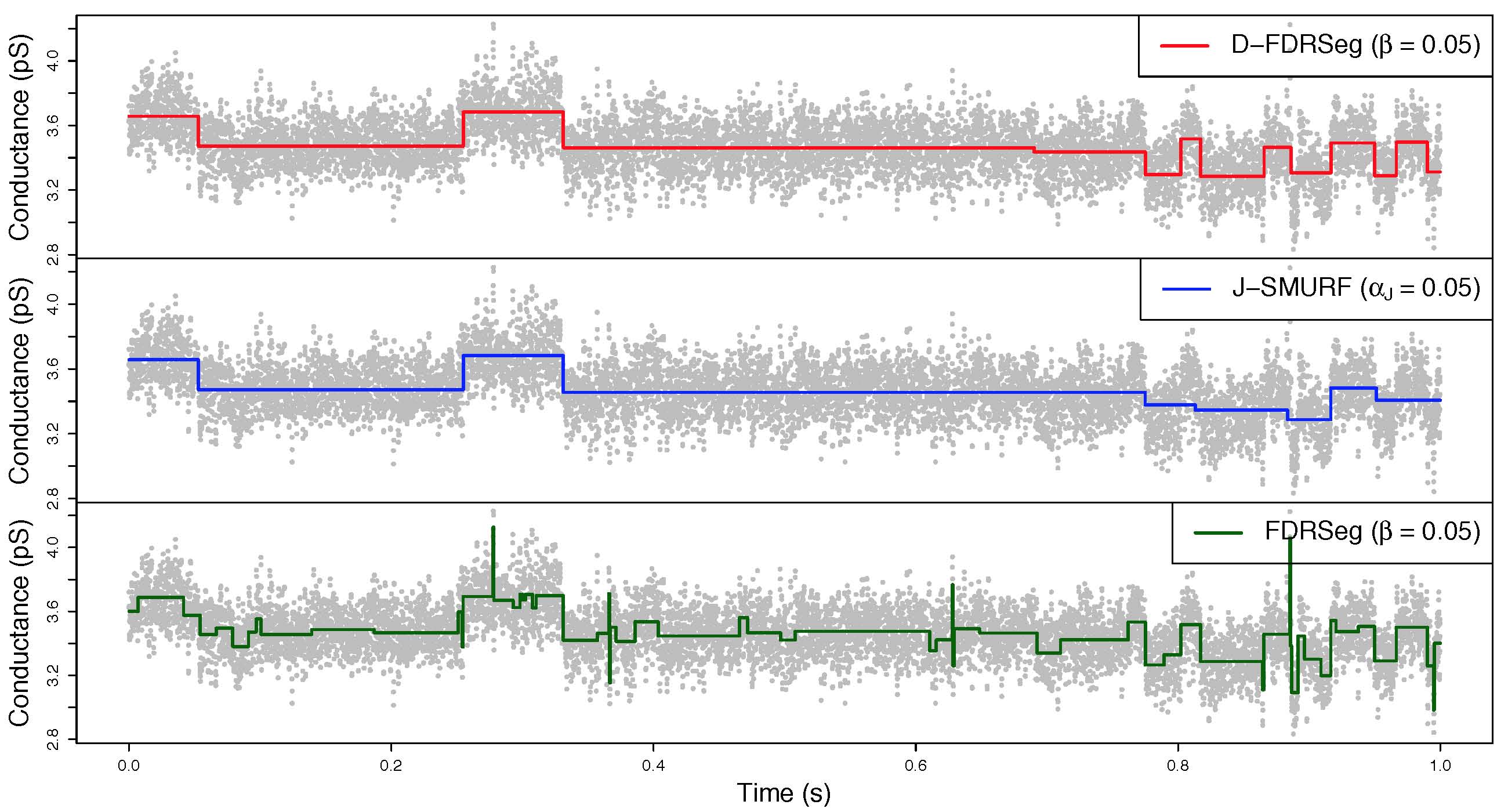} % ga01_104_jf.pdf 
\caption{{The time trace of conductance for gramicidin A. } }  %by D-FDRSeg ($\beta = 0.05$), and J-SMURF ($\alpha = 0.05$)
 \label{pic:ion_channel:dfdr:jsmurf}
\end{figure}

\section{Conclusion and discussion}\label{sec:conclusion}

In this work we proposed a multiple change-point {segmentation method} FDRSeg, which is based on the relaxation of FWER to FDR.  By experiments on both simulation and real data, FDRSeg shows high detection power with controlled accuracy. 
A theoretical bound is provided for its FDR, which provides a meaningful interpretation of the only user-specified parameter $\alpha$. In addition, {we have shown that} jump locations are detected at the optimal sampling rate $1/n$ up to a log-factor. Concerning the signal, i.e. both jump locations and function values, the convergence rate of the estimator is minimax optimal {w.r.t. $L^p$-risk ($p \ge 1$)} up to a log-factor. {This result is over classes of step signals with bounded jump sizes, and either bounded, or possibly increasing, number of jumps. } 
%{Together with the estimate, FDRSeg also provides a ``sequentially'' honest confidence set with coverage probability converging to 1 uniformly over a sequence of classes of step functions with signal-to-noise ratio $\lambda\Delta^2/\sigma^2 \gtrsim \log n /n$. }

Our method is not confined to i.i.d. Gaussian observations, although we restricted our presentation to this in order to highlight the main ideas more concisely. Obviously, it can be extended to more general additive errors, because the proof of Lemma~\ref{lem:FD_noise} only relies on Gaussianity for the independence of the residuals and the mean. In the case of different models, e.g. exponential family regression, we believe that one can argue {along} similar lines as in the proof of Theorem~\ref{thm:mainthm}, but results will only hold asymptotically. This, however, is above the scope of the paper, and postponed to {further} research. 
Also, as we have applied the CBS outlier smoothing procedure to the array CGH data, it  might be of interest to have more robust versions of FDRSeg. To this end, e.g. local median, instead of local mean, might provide useful results. Alternatively, one may transform this into a Bernoulli regression problem~(see~\citep{DueKov09, FriMunSie14}), which might be interesting for {further} research. 
In the paper, we also suggested a modification of FDRSeg for dependent data, which shows attractive empirical results. It would be of interest to study this modified estimator from {a} theoretical point of view {as well}.  

\textbf{Acknowledgement.}
The authors thank Florian Pein, and Inder Tecuapetla for helpful discussions, 
and the Steinem lab (Institute of Organic and Biomolecular Chemistry, University of G{\"o}ttingen) for providing the ion channel data. 
%{See \ref{suppA} for technical proofs.}

\appendix
\section{Technical proofs}
\subsection{Proof of Theorem~\ref{thm:mainthm}}\label{sec:proof_main_theorem}
The proof of Theorem~\ref{thm:mainthm} relies on two lemmata. As a convention, all results are concerning  the FDRSeg $\hat\mu$ in~\eqref{def:fdr_smuce} without explicit statement. The first one gives a bound for the expected number of false discoveries (FD) given no true discoveries ($\text{TD}=0$), see Section~\ref{sec:intro} for the definitions. 
\begin{lem}\label{lem:FD_noise}
Under above notations, we have for $0 < \alpha < 1/3$
\begin{equation*}
 \E{\mathrm{FD}(\alpha) | \text{TD}(\alpha)=0} \leq \frac{2 \alpha}{1-3 \alpha} =: G(\alpha).
\end{equation*}
\end{lem}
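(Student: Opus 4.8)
The plan is to deduce Lemma~\ref{lem:FD_noise} from a geometric tail estimate for $\hat K$ on the event $\{\mathrm{TD}(\alpha)=0\}$, using the minimality built into the definition~\eqref{def:Khat}--\eqref{def:fdr_smuce} of FDRSeg together with the Gaussian decoupling of sample mean and residuals. First I would note that on $\{\mathrm{TD}=0\}$ every estimated change-point is a false discovery, so $\mathrm{FD}=\hat K$ there, whence
\[
\E{\mathrm{FD}(\alpha)\mid\mathrm{TD}(\alpha)=0}=\sum_{j\ge1}\P\{\hat K\ge j\mid\mathrm{TD}(\alpha)=0\}.
\]
Thus it suffices to show a bound of the form $\P\{\hat K\ge j\mid\mathrm{TD}(\alpha)=0\}\le\alpha(3\alpha)^{\lceil j/2\rceil-1}$ for every $j\ge1$: summing this series (which converges precisely because $\alpha<1/3$) gives $2\alpha\sum_{l\ge0}(3\alpha)^l=2\alpha/(1-3\alpha)=G(\alpha)$.

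The structural ingredient is this. Fix any index $i\in\{1,\dots,\hat K\}$ and delete $\hat\tau_i$ from $\hat\mu$; the resulting step function has $\hat K-1$ change-points, and all of its segments except the merged one $[\hat\tau_{i-1},\hat\tau_{i+1})$ coincide with segments of $\hat\mu$ and hence still satisfy their side-constraint in~\eqref{side-constraint:fdrsmuce}. By the minimality in~\eqref{def:Khat} no constant value can make the merged segment admissible, i.e.\ $T_{[\hat\tau_{i-1},\hat\tau_{i+1})}(Y,c)>q_\alpha\bigl(\#[\hat\tau_{i-1},\hat\tau_{i+1})\bigr)$ for every $c\in\R$; in particular this holds for $c=\bar Y_{[\hat\tau_{i-1},\hat\tau_{i+1})}$, and I will call $[\hat\tau_{i-1},\hat\tau_{i+1})$ \emph{blocked} when this last inequality holds. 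On $\{\mathrm{TD}=0\}$ none of the midpoint intervals $J_i$ of~\eqref{eq:interval_changepoint} contains a true change-point, and since $[\hat\tau_{i-1},\hat\tau_{i+1})\subseteq J_{i-1}\cup J_i\cup J_{i+1}$ for interior $i$ (up to the $\lceil n\,\cdot\,\rceil$ rounding), such an interval then contains no true change-point and hence lies inside a single piece of constancy of $\mu$; on that piece $Y=c_k+\sigma\varepsilon$, so $T_{[\hat\tau_{i-1},\hat\tau_{i+1})}(Y,\bar Y_{[\hat\tau_{i-1},\hat\tau_{i+1})})$ has the same law as $T_I(\varepsilon,\bar\varepsilon_I)$ with $\#I=\#[\hat\tau_{i-1},\hat\tau_{i+1})$, whose exceedance of $q_\alpha(\#I)$ has probability at most $\alpha$ by the very definition~\eqref{def:qalpha}.

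It then remains to turn this into a bound on $\P\{\hat K\ge j,\mathrm{TD}=0\}$. If $\hat K\ge j$, the intervals $[\hat\tau_{2m},\hat\tau_{2m+2})$, $m=0,1,\dots$, are pairwise disjoint and blocked; on $\{\mathrm{TD}=0\}$ all true change-points, if any, are confined to the region left of $J_1$ and the region right of $J_{\hat K}$, so only boundedly many of these intervals can fail to lie inside a constant piece of $\mu$, leaving of order $\lceil j/2\rceil$ of them both blocked and inside constant pieces. The one real obstacle is that these intervals are chosen by the estimator, hence data dependent, so the per-interval probability $\alpha$ cannot simply be multiplied. I would handle this in the standard multiscale way: the event ``$\hat K\ge j$, $\mathrm{TD}=0$'' is contained in the event that, inside the \emph{fixed} constant pieces of $\mu$, there exist that many disjoint subintervals on each of which $T_\cdot(\varepsilon,\bar\varepsilon_\cdot)$ exceeds the corresponding local quantile; and this last event is re-expressed through a deterministic left-to-right (leftmost-right-endpoint) enumeration of such subintervals, so that the $m$-th one is a function only of the noise lying to the right of where the $(m-1)$-st ends. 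On a constant piece the residual vectors over disjoint index blocks are independent --- the single place where Gaussianity is used, via the classical independence of the sample mean and the residual vector --- so the probability of finding $l$ disjoint such exceedances is at most $\alpha(3\alpha)^{l-1}$, the factor $3$ absorbing the passage from the existential statement to the greedy family together with the $\lceil n\,\cdot\,\rceil$ rounding in~\eqref{eq:interval_changepoint}, which costs a union bound over boundedly many boundary configurations. Combining gives $\P\{\hat K\ge j\mid\mathrm{TD}(\alpha)=0\}\le\alpha(3\alpha)^{\lceil j/2\rceil-1}$, and the geometric summation of the first paragraph then yields exactly $2\alpha/(1-3\alpha)=G(\alpha)$. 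Throughout, the decoupling of the data-dependent blocked intervals from the noise is the crux; the hypothesis $\alpha<1/3$ is precisely what makes the resulting series converge.
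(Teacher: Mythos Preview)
Your reduction to $\mathrm{FD}=\hat K$ on $\{\mathrm{TD}=0\}$ and the observation that each merged interval $[\hat\tau_{i-1},\hat\tau_{i+1})$ is blocked by minimality are both fine, as is the geometric-sum arithmetic. The genuine gap is the inequality you need for the greedy enumeration, namely that the probability of finding $l$ disjoint intervals $I_1,\dots,I_l$ each satisfying $T_{I_m}(\varepsilon,\bar\varepsilon_{I_m})>q_\alpha(\#I_m)$ is at most $\alpha(3\alpha)^{l-1}$. This already fails for $l=1$: the definition~\eqref{def:qalpha} controls the exceedance probability only for a \emph{fixed} interval $I$, not for the existence of \emph{some} exceeding interval among the $O(n^2)$ candidates. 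Concretely, the $\lfloor n/2\rfloor$ disjoint length-two intervals carry independent statistics, each exceeding $q_\alpha(2)$ with probability essentially $\alpha$, so the probability that at least one exceeds tends to $1$ as $n\to\infty$. Your left-to-right construction does have a stopping-time flavour, but the conditional probability that the \emph{next} greedy interval exists is again an existential event over all subintervals to the right, with no $\alpha$-control; and the factor $3$ cannot come from a union over ``boundedly many boundary configurations'' because there is no bounded family here. (The analogous overestimation bound~\eqref{eq:overestimation:bound} for SMUCE works only because SMUCE uses a single \emph{global} quantile controlling the multiscale statistic over all subintervals simultaneously; FDRSeg's local quantiles $q_\alpha(m)$ sacrifice exactly this.)

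The paper therefore does not attempt a tail bound on $\hat K$ at all. Instead it constructs, on a constant piece, an auxiliary feasible step function $\tilde\mu$ by recursive top-down splitting: test the whole interval (a fixed interval, so exceedance probability $\le\alpha$); if it exceeds, choose a violating subinterval \emph{at random, independently of the values of the local statistics}, split into the three resulting pieces, and recurse on each. The random---rather than data-driven---choice of the split, together with the Gaussian independence of $\bar\varepsilon_I$ from the residuals $\{\varepsilon_i-\bar\varepsilon_I:i/n\in I\}$, is precisely what makes the conditional exceedance probability on each child again $\le\alpha$; this is where the $3$ actually arises (three children per split), giving $\E{D_k}\le 2\alpha(3\alpha)^{k-1}$. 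Minimality then yields $\hat K\le\tilde K$ and hence $\E{\hat K}\le\E{\tilde K}\le 2\alpha/(1-3\alpha)$.
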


\begin{proof}
  Note that it suffices to prove the result for a constant signal, which we assume w.l.o.g. to be constant zero.
  The proof is then based on the following observation. Assume there exists an estimate $\tilde\mu = \sum_{k=0}^{\tilde K} c_k \mathbf{1}_{\tilde I_k}$ with $(\tilde K+1)$ segments 
  $\biguplus_{k=0}^{\tilde K} \tilde I_k = [0,1),$ 
  which fulfills the multiscale side-constraint $\mathcal{C}_{\tilde K}$ in~\eqref{side-constraint:fdrsmuce}.
Then, the FD of FDRSeg is bounded by $\tilde K$, since it minimizes the number of change-points $k$ among all nonempty $\mathcal{C}_{k}$'s. We will prove the result by constructing such an estimate $\tilde \mu$ and show that $\mathbf{E}[{\tilde K}]\leq {2 \alpha}/{(1-3 \alpha)}$. The estimate $\tilde\mu$ is given by an iterative rule to include change-points until the multiscale side-constraint $\mathcal{C}_{\tilde K}$ is fulfilled.

We first check the whole interval $[0,1)$ whether its mean value $\bar Y$ satisfies the multiscale side-constraint. 
If $T_{[0,1)}(Y, \bar{Y}) \le q_{\alpha}(n)$, then $\tilde\mu := \bar{Y}\mathbf{1}_{[0,1)}$. 
Otherwise, we randomly choose  $i^*$ and $j^*$ from 
\begin{equation}\label{def:istar_jstar}
\left\{ (i,j) : t_{[0,1)}\left(\left[\frac{i}{n}, \frac{j}{n}\right]\right):=\frac{\abs{\sum_{l=i}^{j}(Y_l - \bar Y)}}{\sigma\sqrt{j-i+1}}-\text{pen}\left(\frac{j-i+1}{n}\right)  - q_{\alpha}(n) > 0\right\},
\end{equation}
according to any distribution which is independent of the values $t_{[0,1)}([{i}/{n}, {j}/{n}])$'s.
Then we check intervals $[0,i^*/n)$, $[i^*/n, j^*/n]$ and $(j^*/n, 1)$ individually, and split them further in the same manner if necessary. This procedure is repeated until on each resulting interval $I$ its mean value $\bar Y_I$ satisfies the multiscale side-constraint, i.e. $T_I(Y,\bar Y_I) \le q_{\alpha}(\#{I})$.  Finally, $\tilde \mu := \sum_{I} \bar Y_I \mathbf{1}_{I}$.

Let $D_k$ denote the number of change-points (discoveries) and $S_k$ the number of segments introduced in the $k$-th step. We make the convention that $D_k = S_k = 0$ if the procedure stops before the $k$-th step. 
It follows from $\P\left\{T_{[0,1)}(Y,\bar Y) > q_{\alpha}(n)\right\} \le \alpha$, cf.~\eqref{def:qalpha}, (recall $Y_i = \varepsilon_i$ here) that
\[
\E{D_1} \leq 2 \alpha \mbox{ and } \E{S_1} \le 3 \alpha.
\]
Now we consider the three intervals $I_1=[0,i^*/n),\, I_2=[i^*/n,j^*/n]$ and $I_3=(j^*/n,1)$ and bound the probability of further splitting them into smaller intervals.  It will be shown that 
\begin{equation*}
\P\left\{T_{I_k}(Y, \bar Y_{I_k}) > q_{\alpha}(\#{I_k})\, \middle\vert\, T_{[0,1)}(Y, \bar{Y}) > q_{\alpha}(n) \right\} \le \alpha \quad \text{ for } k = 1, \, 2, \, 3.
\end{equation*}
Given $I_2 =[i/n, j/n]$, the random variable $T_{I_k}(Y, \bar Y_{I_k})$ depends only on $\{Y_{i} - \bar Y_{I_k}$, $i/n \in I_k\}$, which is independent of $\bar Y$ and $\bar Y_{I_2}$. It follows from~\eqref{def:istar_jstar} that $t_{[0,1)}(I_2)$ depends only on $\bar Y$ and $\bar Y_{I_2}$. Thus $T_{I_k}(Y, \bar Y_{I_k})$ is independent of $t_{[0,1)}(I_2)$ conditioned on $I_2$.
\begin{align*}
&\P\left\{T_{I_k}(Y, \bar Y_{I_k}) > q_{\alpha}(\#{I_k})\, \middle\vert\, T_{[0,1)}(Y, \bar{Y}) > q_{\alpha}(n) \right\}\\
=&\sum_{0\le i \le j < n}\P\left\{T_{I_k}(Y, \bar Y_{I_k}) > q_{\alpha}(\#{I_k})\, \middle\vert\, t_{[0,1)}(I_2) > 0, I_2=\left[\frac{i}{n},\frac{j}{n}\right]\right\}\\
&\qquad\qquad\qquad\qquad\times\P\left\{I_2=\left[\frac{i}{n},\frac{j}{n}\right]\, \middle\vert\, T_{[0,1)}(Y, \bar{Y}) > q_{\alpha}(n) \right\}\\
=&\sum_{0\le i \le j < n}\P\left\{T_{I_k}(Y, \bar Y_{I_k}) > q_{\alpha}(\#{I_k})\, \middle\vert\, I_2=\left[\frac{i}{n},\frac{j}{n}\right]\right\}\\
&\qquad\qquad\qquad\qquad\times\P\left\{I_2=\left[\frac{i}{n}, \frac{j}{n}\right]\, \middle\vert\, T_{[0,1)}(Y, \bar{Y}) > q_{\alpha}(n) \right\}\\
\le &  \sum_{0\le i \le j < n}\alpha\P\left\{I_2=\left[\frac{i}{n}, \frac{j}{n}\right]\, \middle\vert\, T_{[0,1)}(Y, \bar{Y}) > q_{\alpha}(n) \right\} \le \alpha.
\end{align*}
It follows that
$$
\E{D_2|S_{1}} \leq 2 \alpha S_1 \mbox{ and } \E{S_2|S_1} \le 3 \alpha S_1
$$
Using the same line of argumentation we find in general that 
$$
\E{D_k|S_{k-1}} \leq 2 \alpha S_{k-1}\mbox{ and }\E{S_k|S_{k-1}} \leq 3 \alpha S_{k-1}.
$$ 
It implies 
\begin{align*}
\E{D_k} &=\E{\E{D_k|S_{k-1}}} \le 2 \alpha \E{S_{k-1}} = 2 \alpha \E{S_{k-1}|S_{k-2}} \\
&\le 2 \alpha  \cdot 3 \alpha \E{S_{k-2}} \le 2\alpha (3\alpha)^{k-1}.
\end{align*}
Hence,
\begin{equation*}
\E{\text{FD}} \le \E{\tilde K} = \E{\sum_{k=1}^\infty D_k}  = \sum_{k=1}^\infty \E{D_k}  
 \le \sum_{k=1}^\infty 2 \alpha \left(3\alpha \right)^{k-1} = \frac{2 \alpha}{1-3\alpha}.
\end{equation*}
\end{proof}

The next lemma shows the expected FD conditioned on TD.
\begin{lem}\label{lem:rel_FD_TD}
$\E{\text{FD}(\alpha)| \text{TD}(\alpha)=\kappa} \leq (\kappa+1) \E{\text{FD}(\alpha)} \leq  (\kappa+1) G(\alpha)$.
\end{lem}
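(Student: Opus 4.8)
The second inequality $(\kappa+1)\E{\text{FD}(\alpha)}\le(\kappa+1)G(\alpha)$ is just Lemma~\ref{lem:FD_noise} read for a constant signal (where $\text{TD}=0$ almost surely), so the work lies in the first inequality; throughout I read $\E{\text{FD}(\alpha)}$ as the constant-signal quantity $\E{\text{FD}(\alpha)\mid\text{TD}(\alpha)=0}$. The plan is to condition on the set of true discoveries, cut the sampling grid into $\kappa+1$ pieces on each of which the mean function is constant, and then re-run the iterative-splitting argument behind Lemma~\ref{lem:FD_noise} piece by piece.

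Concretely, on $\{\text{TD}(\alpha)=\kappa\}$ let $\hat\tau_{i_1}<\dots<\hat\tau_{i_\kappa}$ be the true discoveries and $J_1,\dots,J_\kappa$ the corresponding intervals~\eqref{eq:interval_changepoint}; they are pairwise disjoint. Their complement in $[0,1)$ consists of $\kappa+1$ intervals $G_0,\dots,G_\kappa$ (some possibly degenerate), and since each $G_l$ is a concatenation of capturing intervals of \emph{false} discoveries, it contains no true change-point, so $\mu$ is constant on $G_l$. A short check shows every false discovery sits in some $G_l$, whence $\text{FD}=\sum_{l=0}^{\kappa}F_l$ with $F_l$ the number of false discoveries in $G_l$. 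Next, let $I_l$ be the interval between the two true discoveries bordering $G_l$; since its endpoints are genuine change-points of $\hat\mu$, the restriction $\hat\mu|_{I_l}$ is a valid (multiscale-constrained) segmentation of $I_l$ with exactly $F_l$ internal change-points, and a splicing argument against the global minimality of $\hat K$ (as in the proof of Lemma~\ref{lem:FD_noise}) shows $F_l$ equals the least possible number of internal change-points of any valid segmentation of $I_l$. Because the penalty in~\eqref{eq:test:stat:segment} is normalized by the segment length $\#I$, this minimal-segmentation quantity is an \emph{intrinsic} functional of the data on $I_l$ — exactly the object whose expectation Lemma~\ref{lem:FD_noise} bounds by $G(\alpha)$ for constant-signal data.

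Thus, conditionally on the true discoveries, replaying the proof of Lemma~\ref{lem:FD_noise} on each block (using, as there, the independence of local residuals from local means under Gaussianity) should give $\E{F_l\mid\cdot}\le G(\alpha)$, and summing over the $\kappa+1$ blocks yields $\E{\text{FD}\mid\text{TD}=\kappa}\le(\kappa+1)G(\alpha)$, which is even stronger than needed. \textbf{The main obstacle} is that the block $I_l$ is only \emph{approximately} constant-signal data: its endpoints are the true discoveries $\hat\tau_{i_l}$, which the definition~\eqref{eq:interval_changepoint} only forces to be \emph{close} to — not equal to — the true change-points they capture, so $I_l$ may carry (at most two) small pieces of the true signal near its ends. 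The crux is to show that pinning a competing segmentation's boundary to these true discoveries does not force extra change-points near the captured true change-points — i.e.\ that, from the viewpoint of the side-constraint~\eqref{side-constraint:fdrsmuce}, the effective signal on each block is constant — and, simultaneously, that conditioning on the event that $\hat\tau_{i_l}$ is a true discovery leaves intact the Gaussian residual-versus-mean independence on which the replayed argument relies.
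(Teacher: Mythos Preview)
Your plan --- condition on which estimated change-points are the true discoveries, cut $[0,1)$ at those $\kappa$ points into $\kappa+1$ blocks $(\hat\tau_{i_j},\hat\tau_{i_{j+1}})$, and bound the expected number of false discoveries on each block by $G(\alpha)$ via Lemma~\ref{lem:FD_noise} --- is exactly the paper's argument. The paper writes the decomposition
\[
\E{\text{FD}\mid\text{TD}=\kappa}
=\sum_{i_1<\cdots<i_\kappa}\sum_{j=0}^{\kappa}
\E{\text{FD}|_{(\hat\tau_{i_j},\hat\tau_{i_{j+1}})}\,\middle|\,\hat\tau_{i_1},\ldots,\hat\tau_{i_\kappa}\text{ true},\ \text{TD}=\kappa}\,\P\{\cdots\}
\]
and then replaces each inner conditional expectation by $G(\alpha)$ with the single sentence ``By applying Lemma~\ref{lem:FD_noise} to each segment on $(\hat\tau_{i_j},\hat\tau_{i_{j+1}})$.'' Your reading of the middle term $\E{\text{FD}(\alpha)}$ as the constant-signal quantity is also the one the paper's proof actually uses.

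The obstacle you single out --- that the block $(\hat\tau_{i_j},\hat\tau_{i_{j+1}})$ need not carry a constant signal (the captured true change-point may lie strictly inside it, and missed true change-points can also sit there), and that the conditioning event is data-dependent --- is \emph{not} addressed in the paper. Its only justification is the remark ``there is no true discovery on $(\hat\tau_{i_j},\hat\tau_{i_{j+1}})$,'' which matches the hypothesis $\text{TD}=0$ of Lemma~\ref{lem:FD_noise} formally but does not touch the constant-signal reduction made at the start of that lemma's proof, nor the interaction with the conditioning. So you have located precisely the informal step: your splicing argument (that $F_l$ equals the minimal valid segmentation count on $I_l$) and your observation about the segment-length normalization in~\eqref{eq:test:stat:segment} already go beyond what the paper writes, but the paper supplies no further ingredient to close the gap you describe.
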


\begin{proof}
%We use the notations in the definition of true and false discoveries in Section~\ref{sec:intro}, and let $\eta_i := \lceil n(\hat\tau_{i} + \hat\tau_{i+1})/2\rceil/n$ for $i = 0,\,\ldots,\,\hat K$. Then
\begin{equation*}%\label{eq:proof:tmp}
\begin{aligned}
&\E{\text{FD}\,\middle\vert\, \text{TD} = \kappa}\\
 = &\sum_{i_1< \cdots < i_\kappa} \E{\text{FD}\,\middle\vert\, \hat \tau_{i_1}, \ldots, \hat \tau_{i_\kappa} \text{ are true},  \text{TD} = \kappa}\P\left\{\hat \tau_{i_1}, \ldots, \hat \tau_{i_\kappa} \text{ are true}\,\middle\vert\,\text{TD} = \kappa\right\}\\
 =& \sum_{i_1< \cdots < i_\kappa} \sum_{j=0}^{\kappa}\E{\text{FD}|_{(\hat \tau_{i_j},\hat\tau_{i_{j+1}})}\,\middle\vert\,\hat \tau_{i_1}, \ldots, \hat \tau_{i_\kappa} \text{ are true},  \text{TD} = \kappa}\\
 &\qquad\qquad\qquad\qquad\times\P\left\{\hat \tau_{i_1}, \ldots, \hat \tau_{i_\kappa} \text{ are true}\,\middle\vert\,\text{TD} = \kappa\right\},
\end{aligned}
\end{equation*}
where $\tau_{i_{0}} := 0$ and $\tau_{i_{\kappa+1}} := 1$. Note that there is no true discovery on $(\hat \tau_{i_j},\hat\tau_{i_{j+1}})$, $j = 0,\ldots,\kappa$. By applying Lemma~\ref{lem:FD_noise} to each segment on  $(\hat \tau_{i_j},\hat\tau_{i_{j+1}})$, we have
\begin{align*}
\E{\text{FD}\,\middle\vert\, \text{TD} = \kappa}
&\le\sum_{i_1< \cdots < i_\kappa} \sum_{j=0}^{\kappa}G(\alpha) \P\left\{\hat \tau_{i_1}, \ldots, \hat \tau_{i_\kappa} \text{ are true}\,\middle\vert\,\text{TD} = \kappa\right\} \\
&\le (\kappa+1)G(\alpha).
\end{align*}
\end{proof}

Now we are ready to prove Theorem~\ref{thm:mainthm}.

\begin{proof}[Proof of Theorem~\ref{thm:mainthm}]
For random variables $X$, $Y$ and $Z=X+Y$ we find by Jensen's inequality that
  \begin{align*}
    \E{\E{\frac{X}{Z}\middle| Y}} \leq \E{\frac{\E{X\middle|Y}}{Y+\E{X\middle|Y}}}.
  \end{align*}
  We set $X=\text{FD}$, $Y= \text{TD}+1$. Together with Lemma \ref{lem:rel_FD_TD} this yields that
  \begin{align*}
  \text{FDR} =  \E{\frac{X}{Z}} =  \E{\E{\frac{X}{Z}\middle| Y}} \leq  \E{\frac{\E{X\middle|Y}}{Y+\E{X\middle|Y}}} \leq \frac{G(\alpha)}{1+G(\alpha)} = \frac{2\alpha}{1-\alpha}.
  \end{align*}
\end{proof}

\subsection{Proof of Theorem~\ref{thm:accuracy:location}}\label{sec:proof_location}
{
\begin{lem}[Upper bound for quantiles]\label{lem:bnd:quantile}
Let $q_{\alpha}(n)$ be given in~\eqref{def:qalpha}. Then there is a constant $C$ such that
\[
\sup_{n \ge 1} q_{\alpha}(n) \le C + \sqrt{2 \log \frac{1}{\alpha}} \quad \text{ for all } \alpha \in (0,1).  
\]
\end{lem}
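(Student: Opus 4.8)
The plan is to reduce the statement to one application of Gaussian concentration together with the uniform boundedness of the $1/2$-quantiles already recorded in Remark~\ref{rem:quantiles:uniform:bounded}. Fix $m\ge 1$ and an interval $I$ with $\#I=m$, and write, for $[i/n,j/n]\subset I$, the studentized sum $g_{i,j}:=\big(\sum_{l=i}^{j}(\varepsilon_l-\bar\varepsilon_I)\big)/\sqrt{j-i+1}=\langle v_{i,j},\varepsilon_I\rangle$, where $\varepsilon_I:=(\varepsilon_l)_{l/n\in I}$ is an $m$-dimensional standard Gaussian vector and $v_{i,j}$ the associated coefficient vector. Then, by \eqref{eq:test:stat:segment}--\eqref{def:qalpha},
\[
T_I(\varepsilon,\bar\varepsilon_I)=\max_{[i/n,j/n]\subset I,\; s\in\{-1,+1\}}\Big(s\,g_{i,j}-\mathrm{pen}\big(\tfrac{j-i+1}{m}\big)\Big)=:F(\varepsilon_I),
\]
a maximum over a finite index set of centered Gaussian variables shifted by deterministic penalties. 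A short computation gives $\|v_{i,j}\|^2=1-(j-i+1)/m\le 1$, the centering by $\bar\varepsilon_I$ being exactly what keeps this below $1$, so $F:\R^m\to\R$ is $1$-Lipschitz.

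Next I would invoke the Gaussian concentration inequality for Lipschitz functionals (Borell, Sudakov--Tsirelson): for a median $\nu$ of $F(\varepsilon_I)$ one has $\P\{F(\varepsilon_I)>\nu+t\}\le\bar\Phi(t)\le e^{-t^2/2}$ for every $t>0$. It follows from \eqref{def:qalpha} that $q_{1/2}(m)$ is a median of $T_I(\varepsilon,\bar\varepsilon_I)$: minimality forces $\P\{T_I(\varepsilon,\bar\varepsilon_I)>q_{1/2}(m)\}\le 1/2$, and a left limit in \eqref{def:qalpha} gives $\P\{T_I(\varepsilon,\bar\varepsilon_I)<q_{1/2}(m)\}\le 1/2$. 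Hence, taking $\nu=q_{1/2}(m)$,
\[
\P\big\{T_I(\varepsilon,\bar\varepsilon_I)>q_{1/2}(m)+t\big\}\le e^{-t^2/2}\qquad\text{for all }t>0,\ m\ge 1.
\]

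Finally, given $\alpha\in(0,1)$ I would choose $t=\sqrt{2\log(1/\alpha)}>0$, for which the right-hand side equals $e^{-\log(1/\alpha)}=\alpha$; by the defining property \eqref{def:qalpha} of $q_\alpha(m)$ this yields $q_\alpha(m)\le q_{1/2}(m)+\sqrt{2\log(1/\alpha)}$. Taking the supremum over $m$ and setting $C:=\sup_{m\ge 1}q_{1/2}(m)$, which is finite by Remark~\ref{rem:quantiles:uniform:bounded} (itself a consequence of \citep{DueSpok01}), gives the claim.

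The only non-elementary ingredient is this last uniform bound on the $1/2$-quantiles; it is precisely where the scale calibration $\mathrm{pen}(x)=\sqrt{2\log(e/x)}$ enters, and I would regard it, rather than the concentration step, as the heart of the matter. I would also be careful to apply the concentration inequality with the sharp constant $1/2$ in the exponent — anything weaker would replace $\sqrt{2\log(1/\alpha)}$ by a larger multiple — and to verify the variance-proxy bound $\|v_{i,j}\|^2\le 1$, since it is exactly what produces the factor $2$ inside the logarithm in the stated estimate.
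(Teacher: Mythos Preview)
Your proof is correct and follows essentially the same route as the paper: both establish that $T_I(\varepsilon,\bar\varepsilon_I)$ is a $1$-Lipschitz functional of the Gaussian vector (via the same variance computation $\|v_{i,j}\|^2=1-(j-i+1)/m\le 1$) and then apply Gaussian concentration to extract the $\sqrt{2\log(1/\alpha)}$ term. The only difference is the centering: the paper concentrates around the \emph{expectation} $\E{T_n}$ and bounds it explicitly via Vitale's comparison and a Brownian-motion embedding from \citep{FriMunSie14}, whereas you concentrate around the \emph{median} $q_{1/2}(m)$ and invoke Remark~\ref{rem:quantiles:uniform:bounded} for its uniform boundedness --- a legitimate and slightly more economical shortcut, since that remark is already established in the paper independently of the lemma.
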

\begin{proof}
Let $a_{ij} \in \R^n$, $0\le i \le j < n$, be given by
\[
a_{ij} := 
\begin{cases}
\frac{1}{\sqrt{j-i+1}} & \text{ if } i \le k \le j \\
0 & \text{otherwise}
\end{cases},
\]
and $A := \{a_{ij}: 0 \le i \le j < n\}$. Let also $\xi \sim \mathcal{N}(0, I_n)$ with $I_n$ the $n$-dimensional identity matrix, and $\mathbf{1} := (1, \ldots, 1)^t \in \R^n$. Then $q_{\alpha}(n)$ is the upper $\alpha$-quantile of $T_n$,
\[
T_n := \max_{a \in A \cup (-A)} a^t (I_n - \frac{1}{n}\mathbf{1}\mathbf{1}^t)\xi - \lambda_a,
\]
where $\lambda_a = \lambda_{-a} = \sqrt{2\log(en\norm{a}_{\infty}^2)}$. Define $f:\R^n \to \R$ by 
\[
f(x) = \max_{a \in A \cup (-A)} a^t (I_n - \frac{1}{n}\mathbf{1}\mathbf{1}^t)x - \lambda_a \quad \text{ for } x\in\R^n.
\] 
It follows that for $x_1,x_2\in\R^n$,
\begin{align*}
\abs{f(x_1)-f(x_2)}& \le \max_{a\in A\cup(-A)} \abs{a^t(I_n - \frac{1}{n}\mathbf{1}\mathbf{1}^t)(x_1-x_2)}  \\
&\le  \max_{a\in A\cup(-A)} \norm{(I_n - \frac{1}{n}\mathbf{1}\mathbf{1}^t)a}\norm{x_1-x_2} \\
&\le \max_{a\in A\cup(-A)}\norm{a}\norm{x-y} \le \norm{x_1-x_2}.
\end{align*}
That is, $f$ is Lipschitz continuous with constant 1. By~\citep[Lemma A.2.2]{VaaWel96} we have
\begin{equation}\label{eq:exp:bnd:mrtail}
\P\left\{T_n - \E{T_n} > t \right\}=\P\left\{f(\xi) - \E{f(\xi)} > t \right\} \le e^{-\frac{t^2}{2}} \quad \text{ for } t \ge 0. 
\end{equation}
It follows from~\citep{Vitale00} that 
\[
\E{T_n} \le \E{\max_{a \in A \cup (-A)} a^t\xi - \lambda_a}.
\]
By~\citep{FriMunSie14} we further have
\[
\E{\max_{a \in A \cup (-A)} a^t\xi - \lambda_a}\le \E{\sup_{0\le s< t\le 1} \frac{\abs{B(t)-B(s)}}{\sqrt{t-s}}-\sqrt{\log\frac{e}{t-s}}} := C < \infty
\]
where $B(t)$ is a standard Brownian motion. It together with~\eqref{eq:exp:bnd:mrtail} implies
\[
q_{\alpha}(n) \le \E{T_n} + \sqrt{2\log\frac{1}{\alpha}} \le C + \sqrt{2\log \frac{1}{\alpha}}\quad \text{ for all } n\in\N.
\] 
\end{proof}
}

\begin{proof}[Proof of Theorem~\ref{thm:accuracy:location}]
(i) This follows from the proof of Theorem 7 in~\citep{FriMunSie14} by replacing $q$ by $\max_{m \le n} q_{\alpha}(m)$. \newline
(ii) Given any $\epsilon > 0$, it follows by Lemma~\ref{lem:bnd:quantile} that 
\[
\max_{m \le n} q_{\alpha_n}(m) \le C + \sqrt{2\log \frac{1}{\alpha_n}}  \le \sqrt{2(1+\epsilon)\gamma\log n}, 
\]
for sufficiently large $n$. Then, elementary calculation and~\eqref{eq:bound:location:accuracy} shows the assertion.  
\end{proof}

\subsection{Proof of Theorem~\ref{thm:convergence:rate:LpRisk}}\label{sec:proof_convergence_rate}

Let $\hat{K}_n$ be the number of change-points of FDRSeg $\hat\mu_{n, \alpha_n}$, and $q_n:=\max_{m \le n} q_{\alpha_n}(m)$. The control of FDR implies a bound on overestimation of the number of change-points. 
% Some further properties of FDRSeg are summarized in the following two lemmata. 
\begin{lem}[Overestimation bound]\label{lem:overestimation:bound}
$$\P\{\hat K_n > K\} \le (K+2)\frac{2\alpha_n}{1-\alpha_n}.$$
\end{lem}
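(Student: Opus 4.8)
The plan is to read off the overestimation bound directly from the FDR control in Theorem~\ref{thm:mainthm}: since the FDR is the expectation of the ratio $\text{FD}/(\hat K_n+1)$, and this ratio is bounded below by a fixed positive constant on the event $\{\hat K_n > K\}$, the bound $\P\{\hat K_n > K\}\le (K+2)\,\mathrm{FDR}$ follows, and then one plugs in $\mathrm{FDR}\le 2\alpha_n/(1-\alpha_n)$.

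First I would record the deterministic inequality $\text{TD}\le K$. By property (b) of the true/false classification (the intervals in~\eqref{eq:interval_changepoint} are pairwise disjoint), each of the $K$ true change-points accounts for at most one true discovery, so the number of true discoveries never exceeds $K$. Since every estimated change-point is either true or false, $\hat K_n=\text{TD}+\text{FD}$, and hence on the event $\{\hat K_n>K\}$ we have $\text{FD}=\hat K_n-\text{TD}\ge \hat K_n-K\ge 1$. Next I would bound the integrand of the FDR from below on this event: writing $\hat K_n=K+j$ with $j\ge 1$, we get $\text{FD}/(\hat K_n+1)\ge (\hat K_n-K)/(\hat K_n+1)=j/(K+j+1)$, and because $j\mapsto j/(K+j+1)$ is increasing on $j\ge 1$ its minimum there equals $1/(K+2)$. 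Thus $\text{FD}/(\hat K_n+1)\ge 1/(K+2)$ uniformly on $\{\hat K_n>K\}$, and therefore
\[
\frac{2\alpha_n}{1-\alpha_n}\ \ge\ \mathrm{FDR}_{\hat\mu}(\alpha_n)\ =\ \E{\frac{\text{FD}}{\hat K_n+1}}\ \ge\ \E{\frac{\text{FD}}{\hat K_n+1}\mathbf{1}_{\{\hat K_n>K\}}}\ \ge\ \frac{1}{K+2}\,\P\{\hat K_n>K\},
\]
where the first inequality is Theorem~\ref{thm:mainthm} with $\alpha=\alpha_n$. Rearranging yields the claim.

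There is no genuine obstacle here; the only two points needing a line of justification are $\text{TD}\le K$ (immediate from the disjointness of the intervals~\eqref{eq:interval_changepoint}) and the elementary monotonicity of $j\mapsto j/(K+j+1)$, which is what guarantees that the lower bound $1/(K+2)$ holds simultaneously for \emph{all} realizations with $\hat K_n>K$, not merely for the case $\hat K_n=K+1$.
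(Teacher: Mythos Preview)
Your proof is correct and follows essentially the same route as the paper's own proof: both use $\text{FD}\ge(\hat K_n-K)_+$ (equivalently $\text{TD}\le K$), the monotonicity of $j\mapsto j/(K+j+1)$ to obtain the uniform lower bound $1/(K+2)$ on $\{\hat K_n>K\}$, and then Theorem~\ref{thm:mainthm}. The paper phrases the penultimate step as a Markov-type inequality on $(\hat K_n-K)_+/(\hat K_n+1)$ rather than an indicator bound, but this is only a cosmetic difference.
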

\begin{proof}
\begin{align*}
\P\{\hat K_n > K\}& = \P\{(\hat K_n -K)_+ \ge 1\} \le \P\left\{\frac{(\hat K_n - K)_+}{(\hat K_n-K)_+ + K +1} \ge\frac{1}{1 + K + 1}\right\} \\
& \le (K+2)\E{\frac{(\hat K_n - K)_+}{\hat K_n + 1}} \le (K+2)\E{\frac{\text{FD}}{\hat K_n + 1}} \le (K+2)\frac{2\alpha_n}{1-\alpha_n},
\end{align*}
where the last inequality follows from Theorem~\ref{thm:mainthm}.
\end{proof}

%\begin{lem}[Underestimation]\label{lem:underestimation:bound} 
%\[
%\P\{\hat{K}_n < K\} \le 2 K \exp\left(-\frac{1}{8}\left(\frac{\Delta\sqrt{n\lambda}}{2\sqrt{2}\sigma}-q_n\right)_+^2\right) + 2K\exp\left(-\frac{n \lambda \Delta^2}{16\sigma^2}\right).
%\]
%\end{lem}
%\begin{proof}
%It follows from the proof of Theorem 2 in~\citep{FriMunSie14} by replacing $q + \sqrt{2\log \frac{2e}{\lambda}}$ by $q_n$. 
%\end{proof}
\begin{proof}[Proof of Theorem~\ref{thm:convergence:rate:LpRisk} (i)]
Let $p_* := 1/\min\{1/2, 1/p\}$. Note that 
\begin{equation*}
\E{\norm{\hat{\mu}_{n,\alpha_n} - \mu}_{L^p}} =  \int_0^{\sqrt{n}}\P\left\{\norm{\hat\mu_{n,\alpha_n}-\mu}_{L^p} \ge s\right\}ds + \int_{\sqrt{n}}^{\infty}\P\left\{\norm{\hat\mu_{n,\alpha_n}-\mu}_{L^p} \ge s\right\}ds. 
\end{equation*}
In the following, we will show as $n \to \infty$,
\begin{eqnarray}
\sup_{\mu \in B_{\nu,\epsilon,L}}\int_0^{\sqrt{n}}\P\left\{\norm{\hat\mu_{n,\alpha_n}-\mu}_{L^p} \ge s\right\}ds\left(\frac{\nu\epsilon^2 n}{\sigma^2\log n}\right)^{1/p_*} &\le & 25 L,  \label{eq:good:noise}\\
\sup_{\mu \in B_{\nu,\epsilon,L}}\int_{\sqrt{n}}^{\infty}\P\left\{\norm{\hat\mu_{n,\alpha_n}-\mu}_{L^p} \ge s\right\}ds\left(\frac{\nu\epsilon^2 n}{\sigma^2\log n}\right)^{1/p_*} & \to & 0. \label{eq:bad:noise}
\end{eqnarray}
Then, the assertion of the theorem holds by combining~\eqref{eq:good:noise} and~\eqref{eq:bad:noise}. 
\newline Verification of~\eqref{eq:good:noise}: Let us choose 
\begin{equation}\label{eq:choice:delta:n}
\delta_n := 129\frac{\sigma^2\log n}{\epsilon^2 n}. 
\end{equation}
Note that 
\begin{equation}\label{eq:split:three:parts}
\begin{split}
& \int_0^{\sqrt{n}}\P\left\{\norm{\hat\mu_{n,\alpha_n}-\mu}_{L^p} \ge s\right\}ds\left(\frac{\nu\epsilon^2 n}{\sigma^2\log n}\right)^{1/p_*} \\
\le &  \sqrt{n}\left(\frac{\nu\epsilon^2 n}{\sigma^2\log n}\right)^{1/p_*} \left(\P\{\hat K_n > K_{\mu}\}  +  \P\left\{d\left(\mu,\hat\mu_{n, \alpha_n}\right) > \delta_n\right\} \vphantom{\int_0^{\infty}\P\left\{\norm{\hat \mu_{n, \alpha_n}-\mu}_{L^p} \ge s; \hat K_n \le K_{\mu}; d(\mu,\hat\mu_{n, \alpha_n}) \le \delta_n\right\}ds}\right. \\
&\quad +\left. \int_0^{\infty}\P\left\{\norm{\hat \mu_{n, \alpha_n}-\mu}_{L^p} \ge s; \hat K_n \le K_{\mu}; d(\mu,\hat\mu_{n, \alpha_n}) \le \delta_n\right\}ds\right). 
\end{split}
\end{equation}
For the first term in~\eqref{eq:split:three:parts}, it follows from Lemma~\ref{lem:overestimation:bound} that 
\begin{align*}
& \mathop{\lim\sup}_{n\to\infty} \sup_{\mu \in B_{\nu, \epsilon, L}} \sqrt{n}\left(\frac{\nu\epsilon^2 n}{\sigma^2\log n}\right)^{1/p_*} \P\{\hat K_n > K_{\mu}\} \\
\le & \mathop{\lim\sup}_{n\to\infty} \sup_{\mu \in B_{\nu, \epsilon, L}} \sqrt{n}\left(\frac{\nu\epsilon^2 n}{\sigma^2\log n}\right)^{1/p_*} (K_{\mu} +2)\frac{2\alpha_n}{1-\alpha_n} \\
\le & \mathop{\lim\sup}_{n\to\infty} \sqrt{n}\left(\frac{\nu\epsilon^2 n}{\sigma^2\log n}\right)^{1/p_*} \left(\frac{1}{\nu} +2\right)\frac{2\alpha_n}{1-\alpha_n} = 0. 
\end{align*}
For the second term in~\eqref{eq:split:three:parts}, by elementary calculation, one can derive from Theorem~\ref{thm:accuracy:location} and~\eqref{eq:choice:delta:n}  that 
\[
\limsup_{n \to \infty} \sup_{\mu \in B_{\nu, \epsilon, L}}\sqrt{n}\left(\frac{\nu\epsilon^2 n}{\sigma^2\log n}\right)^{1/p_*} \P\left\{d\left(\mu,\hat\mu_{n, \alpha_n}\right) > \delta_n\right\} = 0. 
\]
Now we consider the last term in~\eqref{eq:split:three:parts}. Let $ \{\tau_i; i = 1,\ldots,K_{\mu}\} $ be the change-points of $\mu$, $\{\hat\tau_i; i = 1,\ldots,\hat K_n\}$ the change-points of $\hat \mu_{n, \alpha_n}$. Both are ordered increasingly. By~\eqref{eq:choice:delta:n}, we have $\delta_n < \nu/2$ for large enough $n$. It implies that $\hat K_n = K_{\mu}$ and $\abs{\tau_i - \hat{\tau}_i} \le \delta_n$ for $i = 1, \ldots, K_{\mu}$. Let $\norm{f}_{I, \infty} := \max_{x \in I}\abs{f(x)}$, 
\begin{align*}
\mathcal{I}_n &:= \left\{[0, \tau_1-\delta_n), (\tau_1+\delta_n, \tau_2-\delta_n), \ldots, (\tau_{K_{\mu}}+\delta_n, 1)\right\},\\ 
\text{and }\mathcal{J}_n& := \{[\tau_i-\delta_n, \tau_i+\delta_n]; i = 1,\ldots,K_{\mu}\}. 
\end{align*}
For $I \in \mathcal{I}_n$, we have 
$$
\hat{\mu}_{n, \alpha_n} \equiv \hat{\mu}_{I,n},\text{ and }\mu \equiv \mu_{I}\quad\text{ on } I, 
$$ 
for some constants $\hat{\mu}_{I,n}$ and $\mu_{I}$. Note that 
$$
\sqrt{n\abs{I}}\frac{\abs{\bar{Y}_I - \hat\mu_{I,n}}}{\sigma} \le q_n + \sqrt{2\log (\frac{e}{\abs{I}})},
$$ 
which implies $\sqrt{n\abs{I}}\abs{\bar{Y}_I - \mu_I-\sigma s}/\sigma \le q_n + \sqrt{2\log (e/\abs{I})}$, if $\bar{Y}_I -\mu_I \le \sigma s$ and $\hat\mu_{I,n} -\mu_I > \sigma s$. Then, 
\begin{align*}
\P\{\hat\mu_{I,n} - \mu_I \ge \sigma s\} &\le \P\{\bar{Y}_I -\mu_I \le \sigma s; \hat\mu_{I,n} -\mu_I > \sigma s\} + \P\{\bar{Y}_I > \mu_I + \sigma s\} \\
& \le \P\left\{\sqrt{n\abs{I}}\abs{\frac{\bar{Y}_I - \mu_I}{\sigma}-s} \le  q_n + \sqrt{2\log \frac{e}{\abs{I}}} \right\} + \P\{\bar{Y}_I > \mu_I + \sigma s\} \\
& \le \exp\left(-\frac{1}{8}\left(s\sqrt{n\abs{I}} - q_n -  \sqrt{2\log \frac{e}{\abs{I}}}\right)_+^2\right) + \exp\left(-\frac{n\abs{I} s^2}{2}\right) \\
& \le 2\exp\left(-\frac{1}{8}\left(s\sqrt{n\abs{I}} - q_n - \sqrt{2\log \frac{e}{\abs{I}}}\right)_+^2\right), \\
& \le 2\exp\left(-\frac{1}{8}\left(s\sqrt{n(\lambda_{\mu}-2\delta_n)} - q_n - \sqrt{2\log \frac{e}{\lambda_{\mu}-2\delta_n}}\right)_+^2\right).
\end{align*}
The third inequality above follows from Lemmata 7.1 and 7.3 in~\citep{FriMunSie14}. By the symmetry of {the} Gaussian distribution, the same bound can be shown for $\P\{\hat\mu_{I,n} - \mu_I \le -\sigma s\}$. Thus, for each $I \in \mathcal{I}_n$, 
\[
\P\{\abs{\hat\mu_{I,n} - \mu_I} \ge \sigma s\} \le 4\exp\left(-\frac{1}{8}\left(s\sqrt{n(\lambda_{\mu}-2\delta_n)} - q_n - \sqrt{2\log \frac{e}{\lambda_{\mu}-2\delta_n}}\right)_+^2\right).
\]
For each $J \in \mathcal{J}_n$, we have 
$$
\norm{\hat{\mu}_{n, \alpha_n} -\mu}_{J, \infty} \le \max_{I\in \mathcal{I}_n}\abs{\hat{\mu}_{I,n} -\mu_I} + \tilde{\Delta}_{\mu} \le \max_{I\in \mathcal{I}_n}\abs{\hat{\mu}_{I,n} -\mu_I} + L. 
$$
Therefore, 
\begin{align*}
    &\P\left\{\norm{\hat \mu_{n, \alpha_n}-\mu}_{L^p} \ge s;\, \hat K_n = K_{\mu};\, \abs{\tau_i - \hat{\tau}_i} \le \delta_n\text{ for } i = 1,\ldots, K_{\mu}\right\} \nonumber\\
\le & \P\left\{\sum_{I \in \mathcal{I}_n}\abs{I}\abs{\hat\mu_{I,n}-\mu_I}^p + \sum_{J\in\mathcal{J}_n}\abs{J}\norm{\hat{\mu}_{n, \alpha_n}-\mu}^p_{J,\infty}\ge s^p\right\}\nonumber \\
\le &\P\left\{\max_{I\in\mathcal{I}_n}\abs{\hat\mu_{I,n}-\mu}^p\sum_{I \in \mathcal{I}_n}\abs{I}+\left(\max_{I\in \mathcal{I}_n}\abs{\hat{\mu}_{I,n} -\mu_I} + L\right)^p\sum_{J \in \mathcal{J}_n}\abs{J}\ge s^p\right\} \nonumber\\
\le &\P\left\{\max_{I\in\mathcal{I}_n}\abs{\hat\mu_{I,n}-\mu}^p\sum_{I \in \mathcal{I}_n}\abs{I}+\left(2^{p-1}\max_{I\in \mathcal{I}_n}\abs{\hat{\mu}_{I,n} -\mu_I}^p + 2^{p-1}L^p\right)\sum_{J \in \mathcal{J}_n}\abs{J}\ge s^p\right\} \nonumber\\
\le & \P\left\{\max_{I\in\mathcal{I}_n}\abs{\hat\mu_{I,n}-\mu}^p(1-2 K_{\mu} \delta_n+2^p K_{\mu} \delta_n ) \ge s^p - 2^pL^pK_{\mu}\delta_n\right\}\nonumber \\
\le & \sum_{I\in\mathcal{I}_n}\P\left\{\abs{\hat\mu_{I,n}-\mu}^p(1-2 K_{\mu} \delta_n+2^p K_{\mu} \delta_n ) \ge s^p - 2^pL^p K_{\mu}\delta_n\right\}\nonumber \\
\le &4 (K_{\mu}+1)\exp\left(-\frac{1}{8}\left(\frac{\sqrt{n(\lambda_{\mu}-2\delta_n)}}{\sigma}\left(\frac{s^p-2^pL^p K_{\mu}\delta_n}{1-2 K_{\mu} \delta_n +2^p K_{\mu} \delta_n }\right)^{1/p}\vphantom{-q_n - \sqrt{2\log\frac{e}{\lambda_{\mu} - 2\delta_n}}}\right.\right. \nonumber \\
& \hphantom{4 (K_{\mu}+1)\exp}\quad\left.\vphantom{-\frac{1}{8}}\left.\vphantom{\frac{\sqrt{n(\lambda_{\mu}-2\delta_n)}}{\sigma}\left(\frac{s^p-2^pL^p K_{\mu}\delta_n}{1-2 K_{\mu} \delta_n +2^p K_{\mu} \delta_n }\right)^{1/p} }-q_n - \sqrt{2\log\frac{e}{\lambda_{\mu} - 2\delta_n}}\right)_+^2\right).\label{eq:good:term}
\end{align*}
Let us choose 
$$
s_* := 25 L \left(\frac{\sigma^2\log n}{\nu \epsilon^2 n}\right)^{1/p_*}.
$$ 
Then, for large enough $n$, we have
\begin{align*}
& \left(\frac{\nu\epsilon^2 n}{\sigma^2\log n}\right)^{1/p_*} \int_0^{\infty}\P\left\{\norm{\hat \mu_{n, \alpha_n}-\mu}_{L^p} \ge s; \hat K_n \le K_{\mu}; d(\mu,\hat\mu_{n, \alpha_n}) \le \delta_n\right\}ds \\
\le& 25 L +  (\frac{4}{\nu}+4)L\int_{25}^{\infty}\exp\left(-\frac{\log n}{8}\left(\frac{L}{\sqrt{2}\epsilon}(s-2\cdot128^{1/p_*})-2\right)_+^2\right)ds \\
= & 25 L + (\frac{8}{\nu}+8) \epsilon \sqrt{\frac{\pi}{\log n}} \to 25 L, \text{ uniformly over } B_{\nu, \epsilon, L}, \text{ as }n \to \infty. 
\end{align*}
Thus, we have shown \eqref{eq:good:noise}.
\newline Verification of~\eqref{eq:bad:noise}: From $\hat\mu_{n,\alpha_n} \in \mathcal{C}_{\hat K_n}$ it follows
\[
\norm{\hat\mu_{n,\alpha_n} - \sum_{i=0}^{n-1} Y_i \mathbf{1}_{[\frac{i}{n},\frac{i+1}{n})}}_{L^p} \le \max_{0 \le i \le n-1}\abs{\hat\mu_{n,\alpha_n}(\frac{i}{n}) -Y_i} \le C\sigma\sqrt{\log n},
\]
for some constant $C$. 
Denoting $\mu = \sum_{k=0}^{K_{\mu}} c_k \mathbf{1}_{[\tau_k,\tau_{k+1})}$, we have
\begin{align*}
\norm{\sum_{i=0}^{n-1} Y_i \mathbf{1}_{[\frac{i}{n},\frac{i+1}{n})} - \mu}_{L^p} \le & \norm{\sum_{i=0}^{n-1} Y_i \mathbf{1}_{[\frac{i}{n},\frac{i+1}{n})} - \sum_{k=0}^{K_{\mu}} c_k \mathbf{1}_{[\frac{\lceil n \tau_k\rceil}{n},\frac{\lceil n \tau_{k+1}\rceil}{n})}}_{L^p} \\
& \qquad + \norm{\sum_{k=0}^{K_{\mu}} c_k \mathbf{1}_{[\frac{\lceil n \tau_k \rceil}{n},\frac{\lceil n \tau_{k+1} \rceil}{n})} - \mu}_{L^p} \\
\le & \sigma\left(\frac{1}{n}\sum_{i = 0}^{n-1}\abs{\varepsilon_i}^p\right)^{1/p} + \tilde{\Delta}_{\mu}\left(\frac{{K_{\mu}}}{n}\right)^{1/p} \\
\le & \sigma\left(\frac{1}{n}\sum_{i = 0}^{n-1}\abs{\varepsilon_i}^p\right)^{1/p} + L\left(\frac{1}{n\nu}\right)^{1/p}. 
\end{align*}
If $n$ is large enough such that $\sqrt{n}/2 \ge L/(n\nu)^{1/p} + C\sigma\sqrt{\log n}$, then
\begin{align*}
&\int_{\sqrt{n}}^{\infty} \P\left\{\norm{\hat\mu_{n,\alpha_n} - \mu}_{L^p} \ge s\right\} ds \\
\le &\int_{\sqrt{n}}^{\infty} \P\left\{\norm{\hat\mu_{n,\alpha_n} - \sum_{i=0}^{n-1} Y_i \mathbf{1}_{[\frac{i}{n},\frac{i+1}{n})}}_{L^p} + \norm{\sum_{i=0}^{n-1} Y_i \mathbf{1}_{[\frac{i}{n},\frac{i+1}{n})} - \mu}_{L^p} \ge s\right\} ds \\
\le & \int_{\sqrt{n}}^{\infty}\P\left\{\sigma\left(\frac{1}{n}\sum_{i = 0}^{n-1}\abs{\varepsilon_i}^p\right)^{1/p} + L\left(\frac{1}{n\nu}\right)^{1/p}+C\sigma\sqrt{\log n} \ge s\right\} ds \\
\le & \int_{\sqrt{n}}^{\infty}\P\left\{ \sigma\left(\frac{1}{n}\sum_{i = 0}^{n-1}\abs{\varepsilon_i}^p\right)^{1/p}\ge \frac{s}{2}\right\} ds
\le  \int_{\sqrt{n}}^{\infty}\left(\frac{2\sigma}{s}\right)^{2p} ds\E{\left(\frac{1}{n}\sum_{i = 0}^{n-1}\abs{\varepsilon_i}^p\right)^2} \\
\le & \frac{(2\sigma)^{2p}}{2p-1}\E{\abs{\varepsilon_0}^{2p}}n^{1/2-p}.
\end{align*}
It implies 
\begin{align*}
&\sup_{\mu \in B_{\nu,\epsilon,L}}\int_{\sqrt{n}}^{\infty}\P\left\{\norm{\hat\mu_{n,\alpha_n}-\mu}_{L^p} \ge s\right\}ds\left(\frac{\nu\epsilon^2 n}{\sigma^2\log n}\right)^{1/p_*} \\
\le & \frac{(2\sigma)^{2p}}{2p-1}\E{\abs{\varepsilon_0}^{2p}}n^{1/2-p}\left(\frac{\nu\epsilon^2 n}{\sigma^2\log n}\right)^{1/p_*} \\
\le & (2\sigma)^{2p}\E{\abs{\varepsilon_0}^{2p}}n^{-1/2}\left(\frac{\nu\epsilon^2 n}{\sigma^2\log n}\right)^{1/2} \to 0, \text{ as } n \to \infty.
\end{align*}
\end{proof}
\begin{proof}[Proof of Theorem~\ref{thm:convergence:rate:LpRisk} (ii)]
The proof follows exactly the same way as Theorem~\ref{thm:convergence:rate:LpRisk} (i), if we choose 
\begin{align*}
v:=v_n, \, \delta_n := 175 \frac{\sigma^2\log n}{\epsilon^2 n}, 
\text{ and }s_* := 34 L \left(\frac{\sigma^2\log n}{\nu \epsilon^2 n}\right)^{1/p_*}.
\end{align*}
\end{proof}
\subsection{Proof of Theorem~\ref{thm:convergence:rate:lower:bound}}\label{sec:proof_lower_bound}
By $\chi$ we denote {the observational space of $Y = (Y_0, \ldots, Y_{n-1})$} from model~\eqref{model_1}. 
\begin{lem}[see Section 2.2 in \citep{Tsy09}]\label{lem:lower:bnd:general:approach}
Assume the change-point regression model~\eqref{model_1}, $1 \le p < \infty$, and $B$ a set of step functions. If $\{\mu_1, \ldots, \mu_m\} \subset B$ satisfies
\[
\norm{\mu_i -  \mu_j}_{L^p} \ge 2 s, \quad\text{for } 1 \le i < j \le m,
\] 
then 
\[
\inf_{\hat\mu}\sup_{\mu \in B} s^{-1}\E{\norm{\hat\mu - \mu}_{L^p}} \ge e_m:= \inf_{\psi} \max_{1 \le i \le m} \P_{\mu_i}\{\psi \neq i\},
\]
where the last infimum is taken over all measurable $\psi: \chi \to \{1,\ldots,m\}$.  
\end{lem}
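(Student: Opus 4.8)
The statement is the classical reduction of a minimax lower bound to a multiple hypothesis testing problem, so the plan is to follow the standard recipe (as in Section~2.2 of~\citep{Tsy09}). First I would fix an arbitrary estimator $\hat\mu$, i.e. a measurable map from $\chi$ into the set of step functions, and attach to it the test
\[
\psi^* := \mathop{\arg\min}_{1 \le i \le m}\norm{\hat\mu - \mu_i}_{L^p},
\]
with ties resolved by choosing the smallest index. This $\psi^*$ is a measurable map $\chi \to \{1,\ldots,m\}$ and hence an admissible competitor in the infimum defining $e_m$.

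The key deterministic observation is the inclusion $\{\psi^* \neq i\} \subseteq \{\norm{\hat\mu - \mu_i}_{L^p} \ge s\}$ for every fixed $i$. Indeed, on the event $\{\psi^* \neq i\}$ the minimality in the definition of $\psi^*$ gives $\norm{\hat\mu - \mu_{\psi^*}}_{L^p} \le \norm{\hat\mu - \mu_i}_{L^p}$, so by the triangle inequality together with the separation hypothesis $\norm{\mu_i - \mu_j}_{L^p} \ge 2s$,
\[
2s \le \norm{\mu_i - \mu_{\psi^*}}_{L^p} \le \norm{\hat\mu - \mu_i}_{L^p} + \norm{\hat\mu - \mu_{\psi^*}}_{L^p} \le 2\,\norm{\hat\mu - \mu_i}_{L^p},
\]
whence $\norm{\hat\mu - \mu_i}_{L^p} \ge s$. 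Applying Markov's inequality under $\P_{\mu_i}$ then yields
\[
\P_{\mu_i}\{\psi^* \neq i\} \le \P_{\mu_i}\!\left\{\norm{\hat\mu - \mu_i}_{L^p} \ge s\right\} \le s^{-1}\,\mathbf{E}_{\mu_i}\!\left[\norm{\hat\mu - \mu_i}_{L^p}\right].
\]

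To finish, I would take the maximum over $i \in \{1,\ldots,m\}$ on both sides. The left-hand side is bounded below by $e_m = \inf_\psi\max_i \P_{\mu_i}\{\psi\neq i\}$ because $\psi^*$ is one particular test; the right-hand side is bounded above by $\sup_{\mu \in B} s^{-1}\,\mathbf{E}_{\mu}[\norm{\hat\mu - \mu}_{L^p}]$ since $\{\mu_1,\ldots,\mu_m\}\subset B$. As $\hat\mu$ was arbitrary and $e_m$ does not depend on it, passing to the infimum over all estimators $\hat\mu$ on the right-hand side gives the claim. There is no real obstacle here: the argument is entirely elementary once the separation hypothesis is available, and the only points deserving a line of care are the measurability of $\psi^*$ (handled by the tie-breaking convention) and the fact that the separation is invoked only on the event where $\mu_i$ and $\mu_{\psi^*}$ are genuinely distinct.
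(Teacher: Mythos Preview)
Your proof is correct and is precisely the standard reduction argument from Section~2.2 of \citep{Tsy09} that the paper cites; the paper itself does not give an independent proof of this lemma but simply refers to Tsybakov, so your approach coincides with the intended one.
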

\begin{proof}[Proof of Theorem~\ref{thm:convergence:rate:lower:bound} (i)]
Consider 
\[
\{\mu_1 \equiv 0, \mu_2 \equiv \sigma/\sqrt{n}\} \subset B_{\nu,\epsilon,L}.
\] 
Let $P_i$ be the measure on $\chi$ associated with $\mu_i, i = 0,1$. Then, the Kullback divergence 
\[
K(P_1,P_2) = \frac{n}{2\sigma^2}\left(\frac{\sigma}{\sqrt{n}}\right)^2 = \frac{1}{2}.
\]
By Theorem 2.2 in~\citep{Tsy09}, we have $e_2 \ge 1/4$. Note that 
$$
\norm{\mu_1 - \mu_2}_{L^p} = \frac{\sigma}{\sqrt{n}}. 
$$
It follows from Lemma~\ref{lem:lower:bnd:general:approach} that 
\begin{equation}\label{eq:2:hypothes:bound:a}
\inf_{\hat{\mu}_n}\sup_{\mu \in B_{\nu,\epsilon, L}}\mathbf{E}\left[{\norm{\hat\mu_n-\mu}_{L^p} }\right] \ge \frac{\sigma}{8\sqrt{n}}.
\end{equation}
Consider further 
\[
\{\mu_1 = \mathbf{1}_{[0,\nu)}, \mu_2 = \mathbf{1}_{[0,\nu+\sigma^2/n)}\} \subset B_{\nu,\epsilon,L}.
\]
Similarly, we have 
\begin{align*}
&K(P_1,P_2) \le \frac{n}{2\sigma^2}\frac{\sigma^2}{n} = \frac{1}{2} \implies e_2 \ge \frac{1}{4},\\
\text{and } &\norm{\mu_1 - \mu_2} = \left(\frac{\sigma^2}{n}\right)^{1/p}.
\end{align*}
Then by Lemma~\ref{lem:lower:bnd:general:approach}
\begin{equation}\label{eq:2:hypothes:bound:b}
\inf_{\hat{\mu}_n}\sup_{\mu \in B_{\nu,\epsilon, L}}\mathbf{E}\left[{\norm{\hat\mu_n-\mu}_{L^p} }\right] \ge \frac{1}{8}\left(\frac{\sigma^2}{n}\right)^{1/p}.
\end{equation}
Finally, the assertion follows by~\eqref{eq:2:hypothes:bound:a} and~\eqref{eq:2:hypothes:bound:b}.
\end{proof}
\begin{proof}[Proof of Theorem~\ref{thm:convergence:rate:lower:bound} (ii)]
Consider 
\[
F_{\nu_n}^0:= \left\{\sum_{i=1}^{\lfloor\frac{1}{\nu_n}\rfloor}\frac{(-1)^i+c_i}{2} \mathbf{1}_{[\frac{i-1}{\lfloor 1/ \nu_n\rfloor},\frac{i}{\lfloor 1/ \nu_n\rfloor})}; c_i = \pm\frac{\sigma}{4}\sqrt{\frac{\log 2}{bn^{1-\gamma}}}\right\} \subset B_{\nu_n, \epsilon, L}. 
\]
It is clear that $\#F_{\nu_n}^0 = 2^{\lfloor1/\nu_n\rfloor}$. By Varshamov-Gilbert bound~\citep[Lemma 2.9]{Tsy09}, there is a subset $F_{\nu_n} \subset F_{\nu_n}^0$ such that $\#F_{\nu_n} \ge 2^{\lfloor1/\nu_n\rfloor/8}$ and every two elements in $F_{\nu_n}$ differ on at least $\lfloor1/\nu_n\rfloor/8$ segments. Let $P_i$ be the measure on $\chi$ associated with $\mu_i$, for $\mu_i \in F_{\nu_n}$. 
Then, we {estimate the Kullback divergence as} 
\[
K(P_i,P_j) \le \frac{n}{2\sigma^2}\left(\frac{\sigma}{4}\sqrt{\frac{\log 2}{bn^{1-\gamma}}}\right)^2 \le \frac{\log 2}{32\nu_n} \le \frac{1}{2} \log 2^{\lfloor1/\nu_n\rfloor/8} \le \frac{1}{2}\log(\# F_{\nu_n}).
\]
By Fano's Lemma~\citep[Corollary 2.6]{Tsy09}, it implies that
$e_{\#F_{\nu_n}} \ge 1/4.$ Note that for any $\mu_i, \mu_j \in F_{\nu_n}, i \neq j$, 
\[
\norm{\mu_i - \mu_j}_L^p \ge \frac{\sigma}{4}\sqrt{\frac{\log 2}{bn^{1-\gamma}}}\left(\frac{1}{8}\right)^{1/p} \ge\frac{\sigma}{32}\sqrt{\frac{\log 2}{bn^{1-\gamma}}}. 
\]
It follows from Lemma~\ref{lem:lower:bnd:general:approach} that
\begin{equation}\label{eq:m:hypothes:bound:a}
\inf_{\hat{\mu}_n}\sup_{\mu \in B_{\nu_n,\epsilon, L}}\mathbf{E}\left[{\norm{\hat\mu_n-\mu}_{L^p} }\right] \ge \frac{\sigma}{256}\sqrt{\frac{\log 2}{bn^{1-\gamma}}}.
\end{equation}
Consider further $G_{\nu_n}^0 \subset B_{\nu_n,\epsilon, L}$ given by
\[
G_{\nu_n}^0:=\left\{\sum_{i=1}^{\lfloor\frac{1}{2\nu_n}\rfloor}\frac{(-1)^i}{2}\mathbf{1}_{[\frac{i-1}{\lfloor 1/(2\nu_n)\rfloor}+c_{i-1}, \frac{i}{\lfloor 1/(2\nu_n)\rfloor}+c_{i})}; c_i = \pm\frac{\sigma^2\log2}{16 n}, c_0 = c_{\lfloor 1/(2\nu_n)\rfloor} = 0\right\}. 
\]
Similarly, there is a subset $G_{\nu_n} \subset G^0_{\nu_n}$ such that $\#G_{\nu_n} \ge 2^{(\lfloor1/(2\nu_n)\rfloor-1)/8}$ and every two elements in $G_{\nu_n}$ differ on at least $(\lfloor1/(2\nu_n)\rfloor-1)/8$ change-points.  
Then, we have
\[
K(P_i,P_j) \le \frac{n}{2\sigma^2}\frac{\sigma^2\log2}{8 n}\left(\left\lfloor\frac{1}{2\nu_n}\right\rfloor-1\right) \le \frac{1}{2} \log 2^{(\lfloor1/(2\nu_n)\rfloor-1)/8} \le \frac{1}{2}\log(\# G_{\nu_n}),
\]
which implies that
$e_{\#G_{\nu_n}} \ge 1/4.$ Since, for any $\mu_i, \mu_j \in G_{\nu_n}, i \neq j$, 
\[
\norm{\mu_i - \mu_j}_L^p \ge \left(\frac{\sigma^2\log2}{64 n}\left(\left\lfloor\frac{1}{2\nu_n}\right\rfloor-1\right)\right)^{1/p} \ge \left(\frac{\sigma^2\log2}{256bn^{1-\gamma}}\right)^{1/p} \ge \frac{\log2}{256}\left(\frac{\sigma^2}{bn^{1-\gamma}}\right)^{1/p} ,
\]
then
\begin{equation}\label{eq:m:hypothes:bound:b}
\inf_{\hat{\mu}_n}\sup_{\mu \in B_{\nu_n,\epsilon, L}}\mathbf{E}\left[{\norm{\hat\mu_n-\mu}_{L^p} }\right] \ge \frac{\log2}{2048}\left(\frac{\sigma^2}{bn^{1-\gamma}}\right)^{1/p}.
\end{equation}
Thus, the assertion follows by~\eqref{eq:m:hypothes:bound:a} and~\eqref{eq:m:hypothes:bound:b}. 
\end{proof}

\bibliographystyle{abbrv}

\end{document}